\newtheorem{definition}{Definition}
\newtheorem{theorem}{Theorem}
\newtheorem{proposition}{Proposition}
\newtheorem{corollary}{Corollary}
\newtheorem{example}{Example}
\newtheorem{lemma}{Lemma}
\newtheorem{remark}{Remark}
\author{Wang Changlong, Zhou Feng, Wang Weichao, Ren Kaiqiang}
\begin{document}
\title{The unified theoretical frame of logarithmic alternative model for sparse information recovery and its application in multiple source location problem by TDOA}
\date{}
\maketitle
\begin{abstract}
In sparse information recovery, the core problem is to solve the $l_0$-minimization which is NP-hard. On one hand, in order to recover the original sparse solution, there are a lot of papers designing alternative model for $l_0$-minimization. As one of the most popular choice, the logarithmic alternative model is widely used in many applications. In this paper, we present an unified theoretical analysis of this alternative model designed for $l_0$-minimization. By the theoretical analysis, we prove the equivalence relationship between the alternative model and the original $l_0$-minimization. Furthermore, the main contribution of this paper is to give an unified recovery condition and stable result of this model. By presenting the local optimal condition, this paper also designs an unified algorithm and presents the corresponding convergence result. Finally, we use this new algorithm to solve the multiple source location problem. 
\end{abstract}
\textbf{keywords:} sparse recovery, location, TD0A
\section{Introduction}
The aims of sparse recovery problem is to find the sparsest solution of an underdetermined equation $Ax=b$, where $A\in \mathbb{R}^{m\times n}$ with $m<n$, and these problems can be modelled as the following $l_0$-minimization,
\begin{eqnarray}
\min \limits_{x\in \mathbb{R}^n}\|x\|_0 \ \ s.t. \ Ax=b
\end{eqnarray}
where $\|x\|_0$ stands the number of non-zero elements of the vector $x$ and we call $x$ as a $k$-sparse vector when $\|x\|_0\leq k$. In recent years, sparse recovery has been applied widely in many application, such as visual coding \cite{olshausen1996emergence}, matrix completion \cite{candes2009exact}, source localization \cite{malioutov2005sparse}, and face recognition \cite{wright2008robust}. However, $l_0$-minimization has been proved to be a NP-Hard problem \cite{www}. Therefore, how to design algorithms to solve $l_0$-minimization has always been a lively discussed problem internationally in field of information theory. 

In order to solve $l_0$-minimization, the related international studies can be divided into two categories. One is to design algorithms to solve $l_0$-minimization directly, such as the OMP \cite{Liu2018The, Becerra2018A, Ji2018Fast, Lee2018Efficient}, Subspace pursuit algorithm \cite{dai2009subspace,temlyakov2011greedy} . Another type of algorithms is to design an alternative model for $l_0$-minimization and get the sparse solution by solving the alternative model. For example, inspired by LASSO model, the following $l_1$-minimization is used for getting the sparse solution \cite{Candes2005Decoding,Cand2008The,Foucart2009Sparsest,Zhang2017A},
\begin{eqnarray}
\min \limits_{x\in \mathbb{R}^n}\|x\|_1 \ \ s.t. \ Ax=b
\end{eqnarray}
In mathematics, $l_1$-minimization can be changed into a linear programming, so it can be solved effectively by many convex optimization algorithms \cite{Dantzig2003Linear,Todd1988Exploiting}. However, in order to ensure the solution of $l_1$-minimization is the sparse solution, the measurement matrix $A$ needs satisfies some strict conditions, such as RIP. A matrix $A$ is said to satisfy the RIP of order $k$ if the following inequality holds
\begin{eqnarray}
(1-\delta_k)\|x\|_2^2\leq \|Ax\|_2^2\leq (1+\delta_k)\|x\|_2^2
\end{eqnarray}
where $0<\delta_k<1$. Until now, there are a lot of conditions based on RIP to ensure the successful recovery by $l_1$-minimization, such as $\delta_{3k}+3\delta_{4k}<2$  in \cite{candes2006near}, $\delta_{2k}<{\sqrt{2}-1}$ in \cite{Cand2008The}, $\delta_{2k}<2(3-\sqrt{2})/7$ in \cite{Foucart2009Sparsest}. Cai and Zhang \cite{cai2013sparse} showed that for any given $t\geq \frac{4}{3}$, $\delta_{tk}<\sqrt{\frac{t-1}{t}}$ guarantees to recover every $k$ sparse vector by $l_1$-minimization.

\subsection{Greedy algorithms and alternative models}
Although greedy algorithms are designed to solve $l_0$-minimization directly, these algorithms do not perform well with a high spark level because the $l_0$-minimization model itself is an NP-HARD problem. For these greedy algorithm, the basic idea is to find the supporting index of the matching solution vector by comparing the similarity between the current residual and the measurement matrix column vectors, and then optimize the current supporting index by the least square method in each iteration. In principle, a $k$-sparse vectors can be recovered only by iterating $k$ steps. However, related theories show that OMP algorithm can guarantee successful recovery when the following inequality is satisfied
\begin{eqnarray}
k\leq \frac{1}{2}\left(1+\frac{1}{\kappa(A)}\right)
\end{eqnarray}
where $\kappa(A)=\max \limits_{i,j}\frac{|A_i^TA_j|}{\|A_i\|_2\|A_j\|_2}$. Although the coherence of the matrix is relatively easy to calculate, in fact, related theories show that $\kappa(A)\geq \sqrt{\frac{m}{n}}$ and how to obtain a sufficiently small coherence is an extremely difficult problem.
Except the prior of sparsity, greedy algorithms are greatly affected by the level of noise. In fact, the recovery condition of these algorithms are restrictive. For example, by the analysis of OMP, the algorithm will definitely fail as long as a support index update error occurs at a certain step \cite{www}.

Another type of solving $l_0$-minimization is to design an alternative function. In order to overcome the discrete nature of $0$-norm, the alternative function has property similar to $0$-norm and these functions are often more conducive to optimizing.
Except that we've already mentioned, there exist a lot of alternative models designed for $l_0$-minimization.

Except convex relaxation method, the following $l_p$-minimization is also a popular choice,
\begin{eqnarray}
\min \limits_{x\in \mathbb{R}^n}\|x\|_p^p \ \ s.t. \ Ax=b
\end{eqnarray}
where $\|x\|_p^p=\sum _{i=1}^n |x_i|^p$. Compared with $l_1$-minimization, it seems to be more natural to consider $l_p$-
minimization with $0<p<1$ instead of $l_0$-minimization than $l_1$-minimization. Fourcart \cite{www} showed 
that the condition $\delta_{2k}<0.4531$ can guarantee exact $k$-sparse recovery via $l_p$-minimization for any $0<p<1$. 
Chartrand \cite{chartrand2007exact} proved that, if $\delta_{2k+1}<1$, then we can recover a $k$-sparse vector by $l_p$-
minimization for some $p>0$ small enough. Consider that calculate $\delta_k$ of a given matrix $A$ is still a NP-HARD problem, 
Peng and Li \cite{peng2015np} give a more general conclusion about $l_p$-minimization. For any $A$ and $b$, there exists a 
constant $p(A,b)$ such that the solution of $l_p$-minimization is the original sparse solution. It is worth emphasizing that this conclusion is valid even the measurement matrix $A$ is not satisfied with RIP.

In recent years, there are many alternative models except $l_p$-minimization. To summarize, these alternative models can be expressed as the following $l_{f_p}$-minimization.
\begin{eqnarray}
\min \limits_{x\in \mathbb{R}^n}\|x\|_{f_p} \ \ s.t. \ Ax=b
\end{eqnarray}
where the function $f_p:\mathbb{R}\rightarrow \mathbb{R}^+$ and $\|x\|_{f_p}=\sum_{i=1}^nf_p(|x_i|)$. In Table 1, we present some popular design of $f_p(\cdot)$. 
\begin{table}\label{0124_0054}
\begin{center}
\caption{Some popular design of $f_p(\cdot)$}
\begin{tabular}{ccc} 
\hline
The alternative function & $p$ & Paper \\
\hline
$f_p(x)=|x|^p$ & $0<p\leq 1$ & \cite{peng2015np} \\
$f_p(x)=\frac{|x|}{|x|+p}$ & p>0 &\cite{Li2014Smooth,Cui2016Affine,Cui2018Sparse,zhang2017minimization} \\
$f_p(x)=log(1+|x|/p)$ & p>0 &\cite{Kiechle2015A,Han2016A} \\
$f_p(x)=1-e^{|x|/p}$ &p>0 &\cite{Le2015DC} \\
$f_p(x)=tanh(|x|/p)$ &p>0 & \cite{Chouzenoux2013A}\\
\hline
\end{tabular}
\end{center}
\end{table}
To summarize these non-convex alternative function $f_p(\cdot)$, the following properties are the source of design,
\begin{equation}
\begin{cases}
\forall p>0, \ f_p(0)=0 \\
\text{For any}\ x\neq 0, \ \lim \limits_{p\rightarrow 0}f_p(x)=1\\
\end{cases}
\end{equation}
It is easy to get that the function with the above condition is similar to $0$-norm when $p$ tends to $0$ and it is nature to admit these alternative function with a small $p$.
\subsection{Multiple source location problem and Sparse point represent method}
TIME difference of arrival (TDOA) measurements are
widely used in various applications of sensor networks,
e.g., source localization [1] and tracking [2]. In practice, they
can be obtained by generalized cross-correlation (GCC) [3]. By the classic theory of TDOA, three sensors are enough to locate a target on a plane area, however, precise localization of multiple source is a fundamental problem which has received an upsurge of attention recently [1]. As shown in Figure \ref{fig1-1}, there are two group sensors with different color and each group can only locate one target. Once one of the sensors do not work properly, the target $P_1$ or $P_2$ will miss. If we link these sensors into a wireless network, the network can locate two targets at the same time. What is more important, the network should work fine even one of sensors is miss. The method of sparse recovery have been widely used in multiple source location problems. Some classic sparse methods has been used for solving this problem, such as Greedy Matching Pursuit (GMP) algorithm , $l_1$-minimization and OMP. 

Sparse is a nature property of many physical processes. Furthermore, sparse recovery methods are widely used in Sparse Point represent problem. Usually, theses problem can be explained as the following.
There is a function $f:X\times Y\rightarrow \mathbb{C} \ or\  \mathbb{R}$ which stands a certain physical process , where $X$ is a $d$-dimension parameter space and $Y$ is the measurement space. Usually, the function $f$ satisfies the following property, 
\begin{eqnarray}
f\left(\bigcup\limits _{i=1}^kx^{(i)},y\right)=\sum\limits_{i=1}^kf(x^{(i)},y)=\varphi(y)
\end{eqnarray} 
for any $y\in Y$ and $x^{(i)}$. In such problems, the measurement $\varphi(y)$,$y\in \{y^{(1)},y^{(2)},...,y^{(m)}\}$ are known and the set $\{x^{(1)},x^{(2)},...,x^{(k)}\}$ is what we desire. Therefore, the inverse problem can be model as 
\begin{eqnarray}
D(\varphi(y^{(1)}),\varphi(y^{(2)})...,\varphi(y^{(m)}))\rightarrow \bigcup\limits _{i=1}^kx^{(i)}
\end{eqnarray} 
where the operator $D$ stands the process of solving this inverse problem. By dividing grid points in $X$, we can solve it by sparse recovery method. Let
\begin{equation}\label{0124_0128}
A=\left(
\begin{array}{cccc}
f(z^{(1)},y^{(1)}) & f(z^{(2)},y^{(1)}) & ... & f(z^{(n)},y^{(1)})  \\
f(z^{(1)},y^{(2)}) & f(z^{(2)},y^{(2)}) & ... & f(z^{(n)},y^{(2)})  \\
... & ... & ... & ... \\
f(z^{(1)},y^{(m)}) & f(z^{(2)},y^{(m)}) & ... & f(z^{(n)},y^{(m)})  \\
\end{array}
\right)
 b=\left(
\begin{array}{cc}
\varphi(y^{(1)})   \\
\varphi(y^{(2)}) \\
... \\
\varphi(y^{(m)})
\end{array}
\right)
\end{equation} 
Although the parameter $k$ is unknown, we can get an estimate by experience. We notice that the number of grid points $n$ can be any positive integer. Once $k<<n$, we can treat the $k$-sparse solution vector $x$ as a sparse vector in $\mathbb{R}^n$. Therefore, it is reasonable to solve the inverse by $l_0$-minimization where the measurement matrix $A$ and measurement vector $b$ are defined in (\ref{0124_0128}). Furthermore, in order to ensure $x^i$ can be represent by $z^i$, we can reduce grid distance and increase the number of grid points until meeting the demand of test precision.
\begin{center}
\begin{tabular}{ccccc}
\hline
Problems & $f$ & $X$& $Y$& Paper \\
\hline
ISAR image & $exp\left(-j\frac{4\pi}{c}(f_c+f_n)x\right)$& Scattering points & Pulse & \cite{su2013isar}\\
DOA estimation& $exp(-j2\pi0.5\sin x)$ & DOA & Sensors &\cite{zhang2019gridless}\\
Multiple source location & $\|x-x_1\|_2-\|x-x_n\|_2$ & grid points & Receivers & \cite{Jamali2013Sparsity} \\
Acoustic Scatter  & $exp(idk(x-\hat{x}))$ & location & Far-field data & \cite{wang2020new} \\
\hline
\end{tabular}
\end{center}
\begin{figure}[htb]
	\centering
	\subfigure[Classic location by TDOA]{\includegraphics[width=0.49\textwidth]{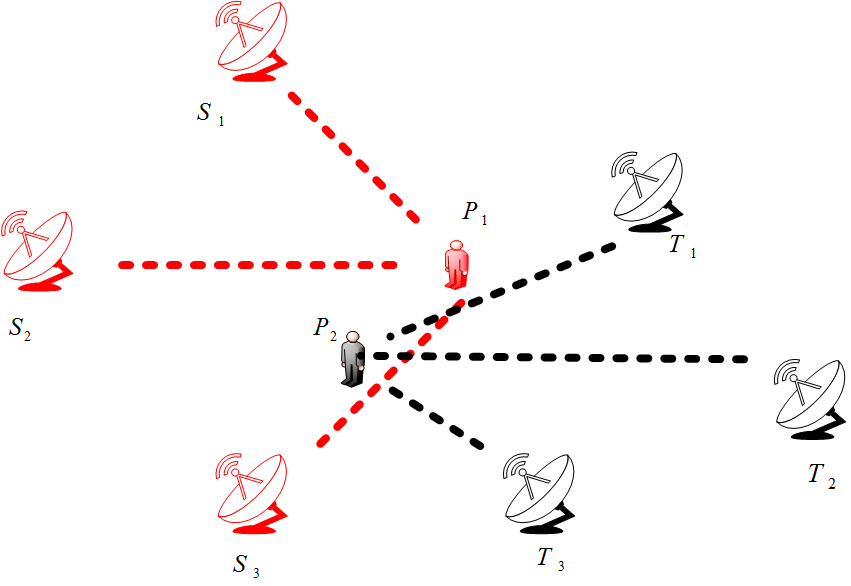}}
    \subfigure[Multiple source location]{\includegraphics[width=0.49\textwidth]{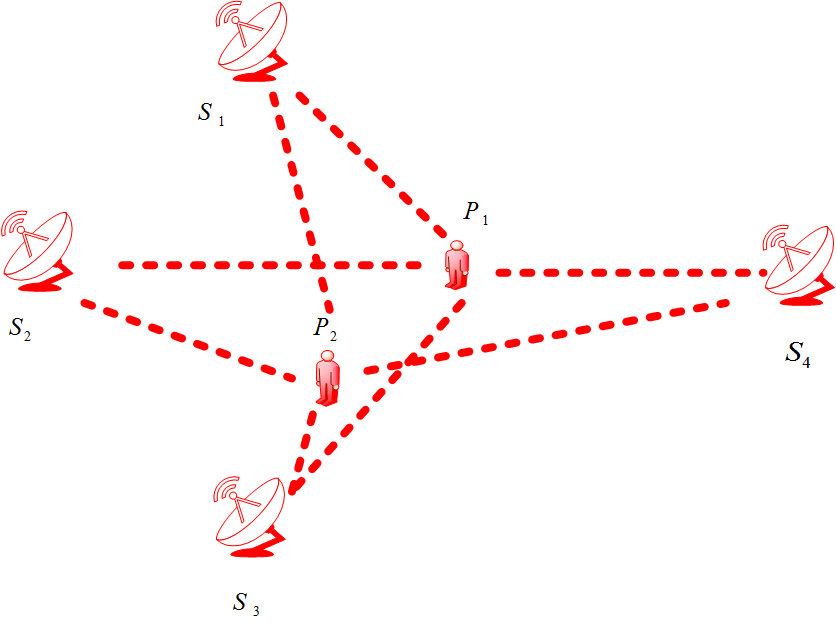}}
	\caption{The need of multi-source location.\label{fig1-1}}
\end{figure}
In Table 2, we show some problems which is solved by sparse point represent method and the details about the methods can be found in the corresponding references. For sparse point methods, we have to increase the number of grid points to meet the resolution require. However, reduce the distance between grid points can make the coherence of measurement matrix $A$ increase, so that algorithms may fail even the sparse level is low. On the other hand, the measurement matrix in sparse point methods is difficult to satisfy with RIP since these matrices do not have random structure. 
\subsection{The main contribution of this paper}
In this paper, we consider the alternative function $h_{p,q}(x)=log(1+\frac{|x|^q}{p})$ instead of the original $0$-norm function and give some theory analysis of this alternative function. Furthermore, we use this new model to solve multiple source location problem by TDOA. To summarize, the main contribution of this paper is the following.

\romannumeral1. Both noise and noiseless case, we prove the equivalence relationship between $l_{h_{p,q}}$-minimization and $l_0$-minimization.

\romannumeral2. We present the recovery condition and the stable result of $l_{h_{p,q}}$-minimization.

In this paper, a sufficient and necessary condition of $l_{h_{p,q}}$-minimization is given and we also show that what matrices satisfy such condition. By a new concept named, we also show the stable result of $l_{h_{p,q}}$-minimization.

\romannumeral3. By presenting an analysis expression of local optimal solution, an unify algorithm for $l_{h_{p,q}}$-minimization and the corresponding convergence conclusion are given.

\romannumeral4. Multiple source location problem can be modelled as a sparse recovery model. By applying $l_{h_{p,q}}$-minimization, we solve it by our algorithm, and experiment result show that our method do better than classic algorithm in sparse recovery.

Our paper is organized as follows. In Section 2, the equivalence relationship between $l_{h_{p,q}}$-minimization and $l_0$-minimization is established. Both noise case and noiseless case, the sparse solution can be recovered by minimizing $h_{p,q}(\cdot)$. In Section 3, some theoretical analysis are presented, including the recovery condition and stable result of  $l_{h_{p,q}}$-minimization. By giving the analytic expression of local optimizations, we give a fixed point iterative algorithm and the corresponding convergence conclusion of $l_{h_{p,q}}$-minimization in Section 4. Finally, we apply $l_{h_{p,q}}$-minimization to solve multiple source location problem.
\subsection{Some symbols}
For convenience, for $x \in \mathbb{R}^n$, its support is defined by $support\ (x)=\{i:x_i \neq 0\}$ and the cardinality of set $\Omega$ is denoted by $|\Omega|$.
Let $Ker(A)=\{x \in \mathbb{R}^n:Ax=0\}$ be the null space of matrix $A$. We define subscript notation $x_\Omega$ to be such a vector that is equal to $x$ on the index set $\Omega$ and zero everywhere else. and use the subscript notation $A_\Omega$ to denote a submatrix whose columns are those of the columns of $A$ that are in the set index $\Omega$. Let $\Omega^c$ be the complement of $\Omega$. 
For any positive integer $n$, we denote $[n]=\{1,2,3,...n\}$.
\section{The equivalence relationship between the logarithmic alternative model and $l_0$-minimization}
As we have introduced in Section 1, the main work of this paper is to consider the alternative function $h_{p,q}(x)=\log(1+|x|^q/p)$ with $p>0$ and $0<q\leq 1$. i.e., we can get the following $l_{h_{p,q}}$-minimization,
\begin{eqnarray}\label{hp-model}
\min\limits_{x\in \mathbb{R}^n} \|x\|_{h_{p,q}} s.t. Ax=b
\end{eqnarray}
where $\|x\|_{h_{p,q}}=\sum_{i=1}^nh_{p,q}(x_i)$. For any $p>0$ and $0<q\leq 1$, it is easy to get that $h_{p,q}(0)=0$ and $\lim_{p\rightarrow 0^+}\frac{h_{p,q}(x)}{log(1+p^{-1})}=1$. Therefore, the main purpose of this section is to prove the equivalence between $l_{h_{p,q}}$-minimization and $l_0$-minimization.
In many papers which design alternative function, they only consider the condition (), and it is easy to get that the function $h_{p,q}(x)$ is also satisfied with condition () for any fixed $0<q\leq 1$. In two dimension cases, the graph of $h_{p,q}(x)$ with different $q$ are shown in Figure, it is obvious that $h_{p,q}(x)$ tends to $0$-norm as long as $p\rightarrow 0$. So it seems to be reasonable to believe that the sparse solution can be recovered by minimizing such alternative function with a small enough $p$. However, the following example will show us the condition () is not enough to ensure the equivalence relationship between alternative models and original problem.
\begin{figure}[!ht]
	\centering
	\subfigure[$q=1$]{\includegraphics[width=0.49\textwidth]{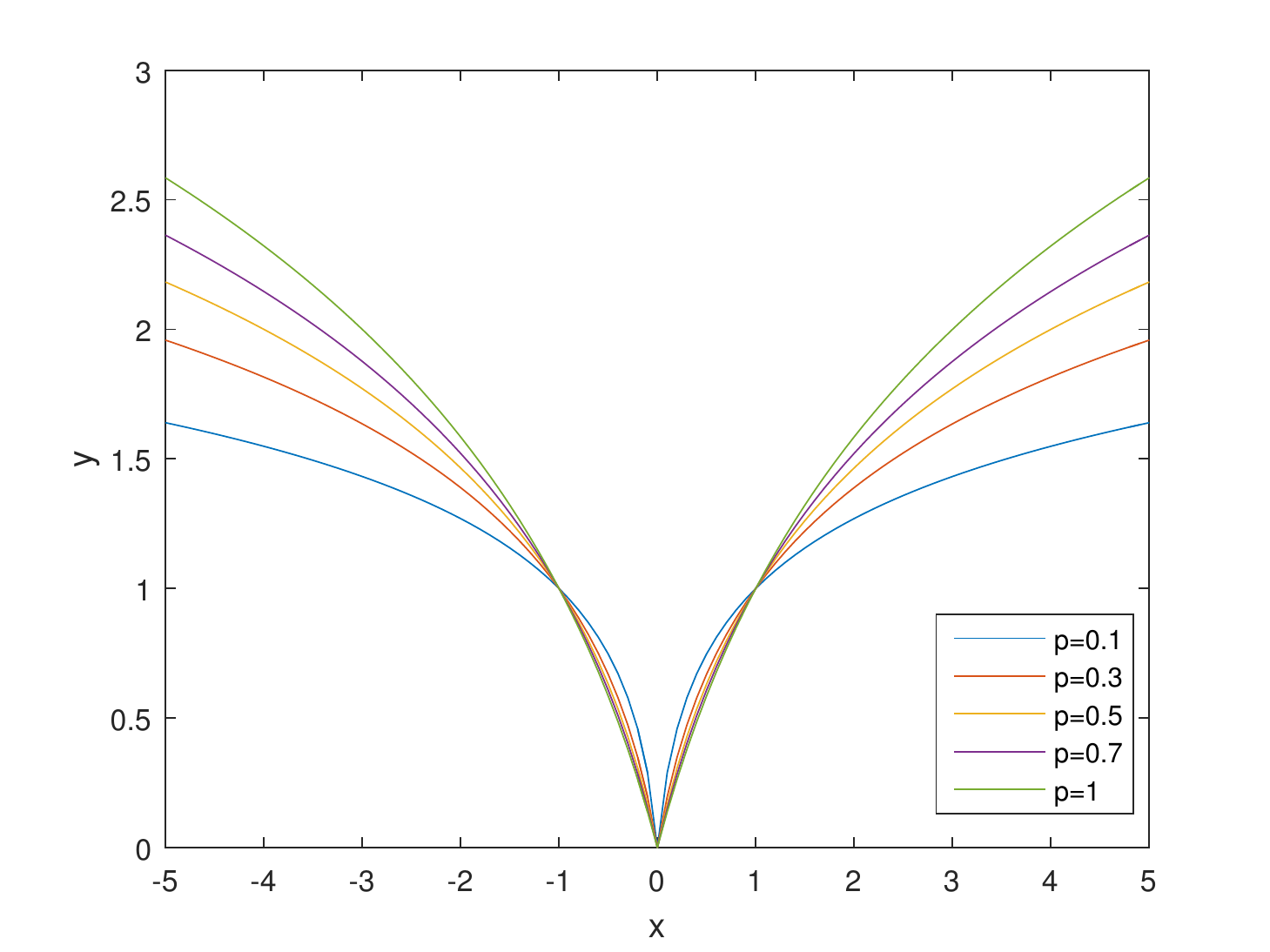}} \hfill
	\subfigure[$q=0.7$]{\includegraphics[width=0.49\textwidth]{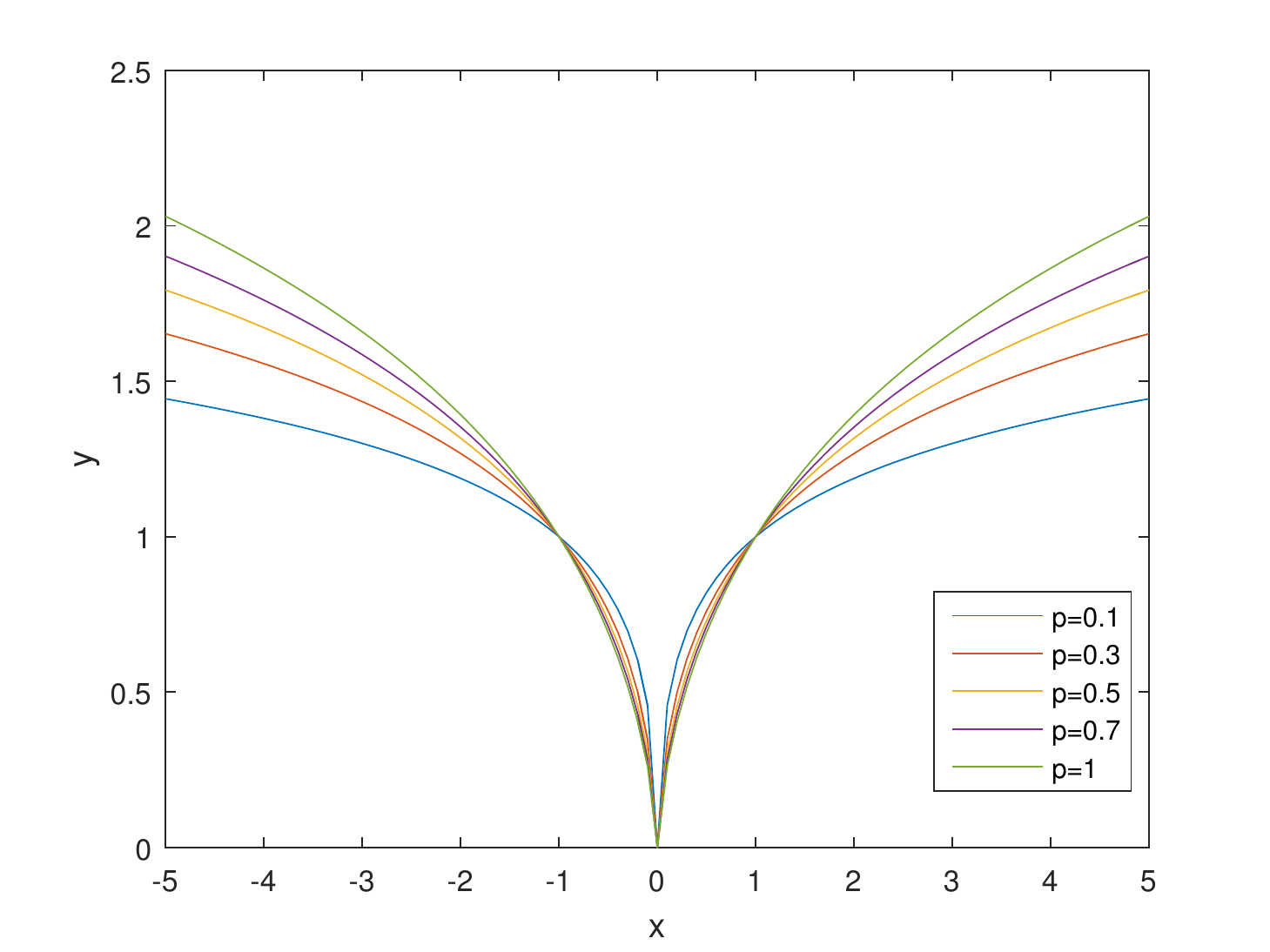}}
	\subfigure[$q=0.5$]{\includegraphics[width=0.49\textwidth]{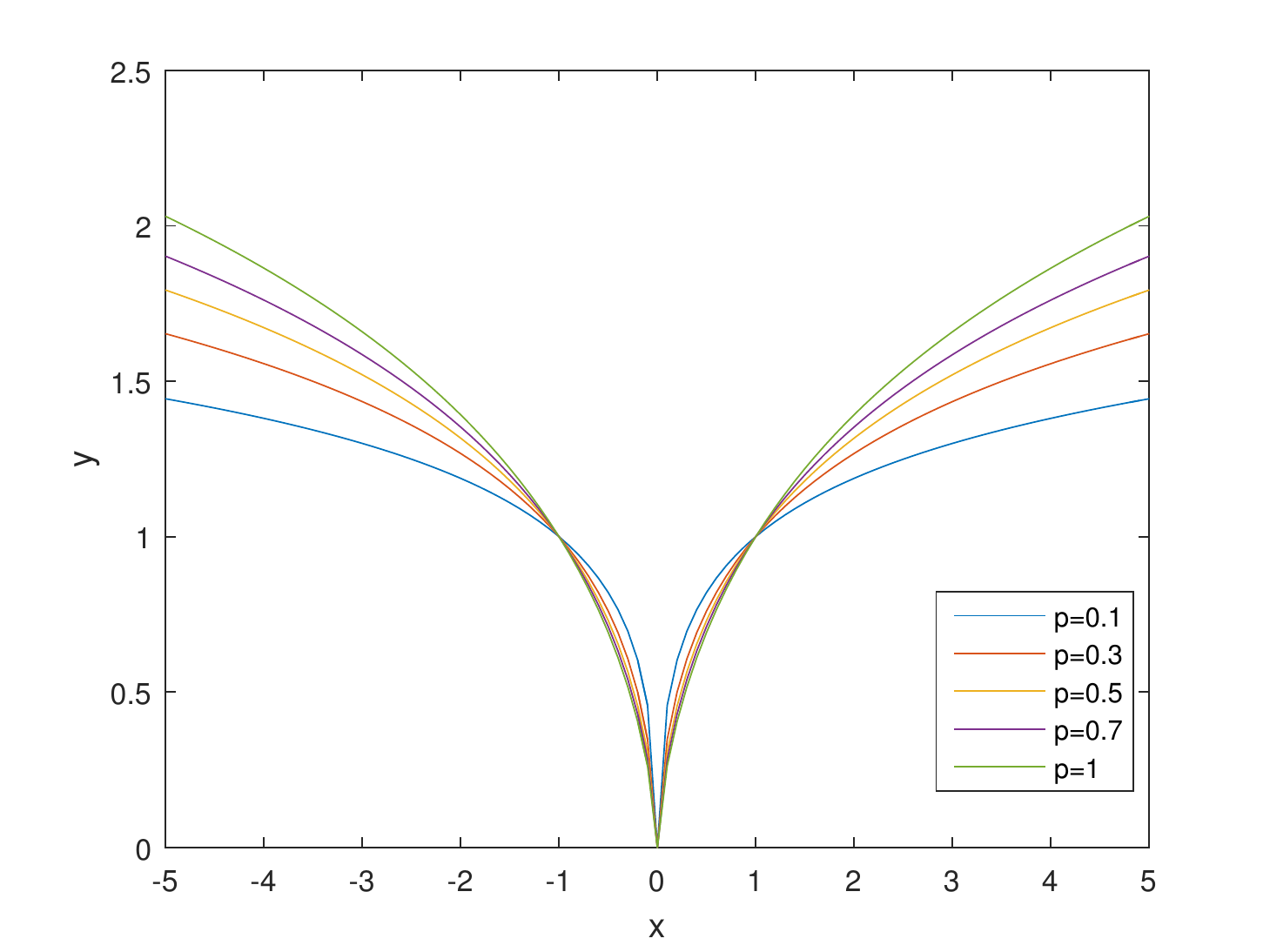}}
	\subfigure[$q=0.3$]{\includegraphics[width=0.49\textwidth]{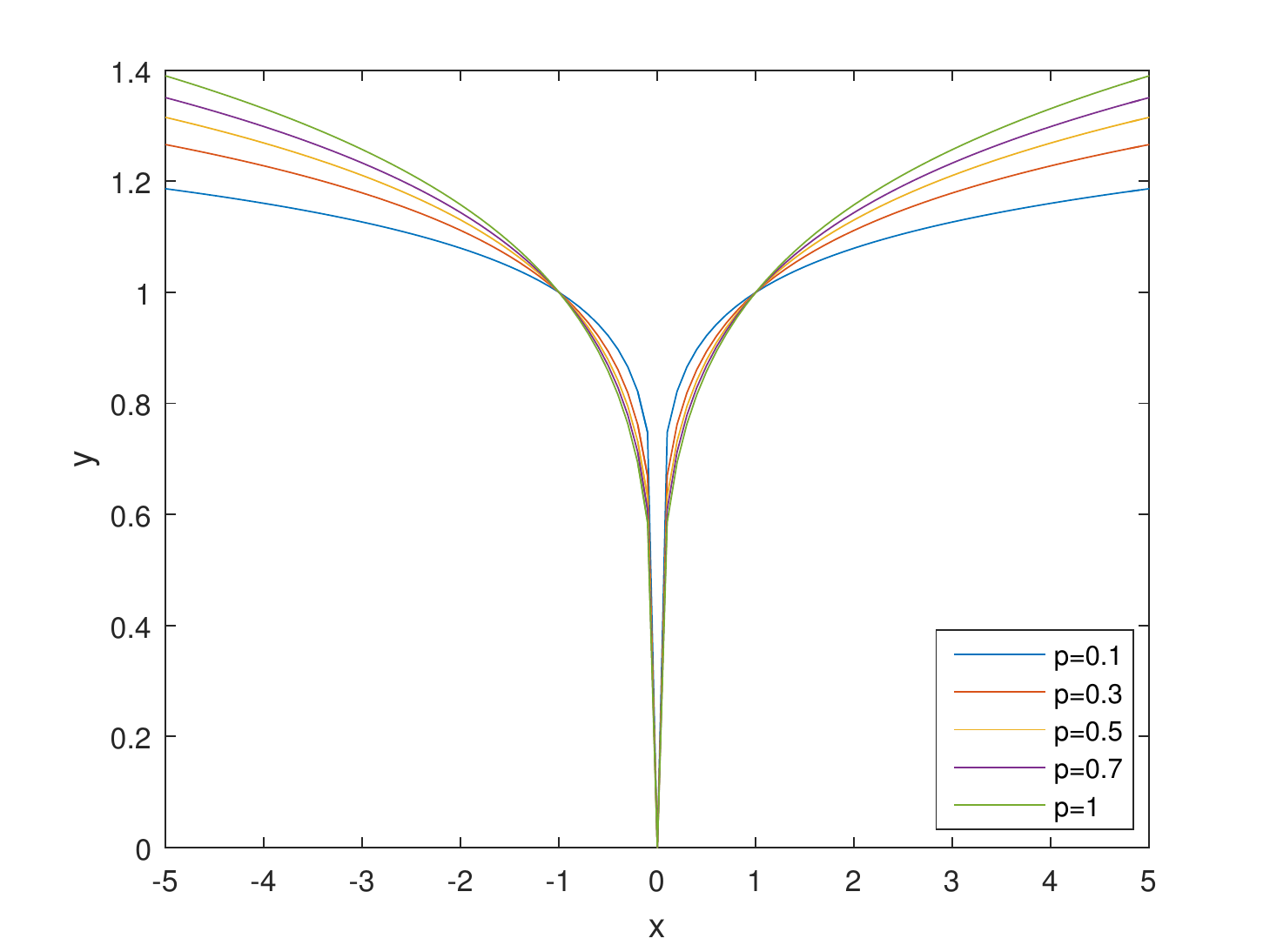}}
	\caption{With different values of $q$, the figure of $h_{p,q}(\cdot)$.\label{fig1-2}}
\end{figure}
\begin{example}
Consider the following measurement matrix $A$ and the measurement vector $b$,
\begin{center}
$A=\left(
\begin{array}{ccc}
1 & 0 & -1  \\
0 & 1 & -1  \\
\end{array}
\right)$ and $\boldsymbol b=\left(
\begin{array}{cc}
1   \\
0   
\end{array}
\right)$
\end{center}

It is easy to get the sparse solution of $Ax=b$, $x^*=(1,0,0)^T$ and the solution can be expressed as $x(t)=x^*+t(1,1,1)^T$, where $t\in \mathbb R$.

Consider the alternative function
\begin{eqnarray}
f_p(x)=
\begin{cases}
\frac{x}{p}\sin \frac{p}{x} & x\neq 0 \\
0& x=0
\end{cases}
\end{eqnarray}
It is obvious that $f_p(0)=0$ for any $p>0$,and $\lim \limits_{p\rightarrow 0} f_p(x)=1$, for any $x\neq 0$.

For a fixed $p\in(0,\pi/2)$, define the function $F_p(t)$ as
\begin{eqnarray}
F_p(t)=\|x(t)\|_{f_p}=f_p(1+t)+2f_p(t)
\end{eqnarray}
Consider a sequence $\{t_n=(p(1.5\pi+2n\pi))^{-1}\}\rightarrow 0$, such that
\begin{eqnarray}
F_p(t_n)=f_p(t_n)-\frac{2t_n}{p}
\end{eqnarray}

Consider the function $g(t)$
\begin{eqnarray}
g(t)=\frac{1+t}{p}\sin \frac{p}{1+t}-\frac{2t}{p}
\end{eqnarray}
therefore,
\begin{eqnarray}
g'(t)=\frac{1}{p}\sin \frac{p}{1+t}-\frac{1}{1+t}\cos \frac{p}{1+t}-\frac{2}{p}
\end{eqnarray}
and
\begin{eqnarray}
g'(0)=\frac{1}{p}\sin p-\cos p-\frac{2}{p}
\end{eqnarray}

Consider $h(p)$
\begin{eqnarray}
h(p)=\sin p-p\cos p-2
\end{eqnarray}
since $h'(p)=p\sin p>0$, we can get that $-1>h(p)>h(0)=-2$, when $p\in (0,\pi/2)$. So $g'(0)<0$.

Since $g'(x)$ is continuous at point $x=0$, there exists $(0,\xi)$ such that$g(t)<g(0)$ for any $t\in (0,\xi)$. For the sequence $\{t_n\}$, there exists an interger $N$ such that if $n>N$
\begin{eqnarray}
F_p(t_n)=g(t_n)<g(0)=F_p(0)
\end{eqnarray}
\end{example}
 
By Example 1, the condition () can not provide design principles for reasonable alternative functions. So it is worth considering whether the $h_{p,q}(x)$ is a correct choice. Further, we will prove the equivalence between alternative model and $l_0$-minimization. i.e., the solution of $l_{h_{p,q}}$-minimization is also the solution of original model. Before giving the main theorem, some lemmas are need to be presented.
\begin{lemma}\label{0731_1707}
If $x^*$ is the solution of the $l_{h_{p,q}}$-minimization (\ref{hp-model}), then the sub-matrix $A_S$ is a full rank matrix, where $S=support(x^*)$.
\end{lemma}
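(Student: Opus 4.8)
The plan is to argue by contradiction, exploiting the strict concavity of $h_{p,q}$ away from the origin. Suppose $A_S$ fails to have full column rank, where $S = \text{support}(x^*)$. Then its columns are linearly dependent, so there is a nonzero vector $h \in \mathbb{R}^n$ with $\text{support}(h) \subseteq S$ and $Ah = 0$. Consider the perturbed family $x(t) = x^* + th$. Since $Ah = 0$, every $x(t)$ is feasible, i.e. $Ax(t) = b$. Because $x^*_i \neq 0$ for each $i \in S$ and $S$ is finite, I would pick $\epsilon > 0$ small enough that for all $|t| \le \epsilon$ and all $i \in S$ the entry $x^*_i + t h_i$ keeps the same sign as $x^*_i$; in particular no coordinate crosses zero on $[-\epsilon, \epsilon]$. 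For indices $i \notin S$ both $x^*_i$ and $h_i$ vanish, so those terms contribute $h_{p,q}(0) = 0$ throughout.

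Next I would study the scalar objective $F(t) = \|x(t)\|_{h_{p,q}} = \sum_{i \in S} h_{p,q}(x^*_i + t h_i)$ on $[-\epsilon, \epsilon]$. The key analytic fact is that $h_{p,q}(x) = \log(1 + |x|^q / p)$ is strictly concave on each of the half-lines $(0,\infty)$ and $(-\infty, 0)$ whenever $p > 0$ and $0 < q \le 1$: a direct computation of the second derivative gives, for $x > 0$, a numerator proportional to $(q-1)p - x^q$, which is strictly negative in this parameter range (including the endpoint $q=1$), and the case $x<0$ follows since $h_{p,q}$ is even. Restricted to $[-\epsilon, \epsilon]$, each summand $t \mapsto h_{p,q}(x^*_i + t h_i)$ is therefore concave, being the composition of a concave function with an affine map whose argument is confined to one half-line, and it is strictly concave whenever $h_i \neq 0$. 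Since $h \neq 0$ has support in $S$, at least one $h_i \neq 0$, so $F$ is strictly concave on $[-\epsilon, \epsilon]$.

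Finally, strict concavity forces $F(0) > \frac{1}{2}\bigl(F(-\epsilon) + F(\epsilon)\bigr)$, so at least one of $F(-\epsilon)$ and $F(\epsilon)$ is strictly smaller than $F(0) = \|x^*\|_{h_{p,q}}$. The corresponding $x(\pm \epsilon)$ is feasible and achieves a strictly smaller objective value, contradicting the optimality of $x^*$; hence $A_S$ must have full column rank. I expect the main obstacle to be the concavity verification: one must confirm that the second derivative of $h_{p,q}$ stays negative uniformly across the whole range $0 < q \le 1$ and handle the non-differentiable kink at the origin, which is precisely why the interval $[-\epsilon, \epsilon]$ is chosen so that no coordinate of $x(t)$ passes through zero.
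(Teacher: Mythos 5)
Your proposal is correct and follows essentially the same route as the paper: both take a nonzero kernel vector $h$ supported in $S$, perturb to $x^* \pm \alpha h$ with $\alpha$ small enough that no coordinate changes sign, and use the (strict) concavity of $h_{p,q}$ on each half-line to show the midpoint value $\|x^*\|_{h_{p,q}}$ strictly exceeds the average of the two perturbed values, contradicting optimality. Your write-up is in fact more careful than the paper's (explicit second-derivative check and correct choice of the perturbation radius), but it is the same argument.
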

\begin{proof}
If the sub-matrix $A_S$ is not full rank, then there exists a vector $h \in Ker(A)$ such that $support(h)\subseteq S$. Without of generality, we note that$x^*=[x^*_1,x^*_2,...,x^*_n]^T$. Let
\begin{equation}
a=\max \limits_{h_i\neq 0} \frac{|x^*_i|}{|h_i|}
\end{equation}
therefore, it is easy to proof that 
\begin{equation}
\notag sign(x^*_i+\alpha h_i)=sign(x^*_i-\alpha h_i)=sign(x^*_i)
\end{equation}
for $\alpha \in [0,a]$. Since $h_{p,q}(x)$ is a concave function when $x\geq 0$ or $x\leq 0$, it is easy to get that
\begin{eqnarray}
\notag h_{p,q}(|x^*_i|)&=&h_{p,q}(\frac{1}{2}|x^*_i+\alpha h_i|+\frac{1}{2}|x^*_i-\alpha h_i|)\\
\notag &\geq & \frac{1}{2}h_{p,q}(|x^*_i+\alpha h_i|)+\frac{1}{2}h_{p,q}(|x^*_i-\alpha h_i|)
\end{eqnarray}
Therefore,
\begin{equation}
\notag h_{p,q}( x^*)\geq \frac{1}{2}h_{p,q}(x^*+\alpha h )+\frac{1}{2}h_{p,q}(x^*-\alpha h )
\end{equation}
Since $x^*$ is the solution of the $l_{h_{p,q}}$-minimization (\ref{hp-model}), we can get that, 
\begin{equation}
h_{p,q}(x^*_i) = \frac{1}{2}h_{p,q}(x^*_i+\alpha h_i )+\frac{1}{2}h_{p,q}(x^*_i+\alpha h_i)
\end{equation}
for any $i\in S$, which is obviously false.
\end{proof}
By lemma \ref{0731_1707}, we define the following set,
\begin{eqnarray}
T(A,b)=\{x\in \mathbb{R}^n| \ Ax=b \ and A_{support(x)}\ is\ full\ rank. \}
\end{eqnarray}
It is obvious that the set $T(A,b)$ is a finite set so we can define the following concept,
\begin{eqnarray}
r(A,b)=\max \limits_{x\in T(A,b)} |x_i|
\end{eqnarray}

In convex analysis, the recession cone of an unbounded convex set is an important concept. 
\begin{definition}
For a given convex set $T$, its linear recession cone $0^+T$ is defined as the following,
\begin{eqnarray}
\notag 0^+T&=&\{x|\forall \lambda>0, \ \forall y\in T, y+\lambda x\in T\}\\
           &=&\{x|x=\lim\limits_{\lambda_n\rightarrow 0^+}\lambda_nx_n,\ where \ x_n\in T\}
\end{eqnarray}
\end{definition}
For an underdetermined equations, $l_{h_{p,q}}$-minimization may admit several solutions so we need the following concept.
\begin{definition}
The nonincreasing rearrangement of the vector $x\in \mathbb{R}^n$ is the vector $\mu (x)\in \mathbb{R}^n$ for which
\begin{eqnarray}
\mu (x)_1\geq \mu (x)_2 \geq \mu (x)_3...\geq \mu (x)_n\geq 0
\end{eqnarray}
and there is a permutation $\pi$:$[n]\rightarrow [n]$ with $\mu (x)_{j}=|x_{\pi (j)}|$ for all $j\in [n]$. 
\end{definition}
The following lemma describes the relationship between a polynomial and its roots and will be a useful tool for the main theorem.
\begin{lemma}\label{1219_2121}
For given $w_1,w_2,...,w_n$, if the following equations 
\begin{eqnarray}\label{0814_2214}
\left \{
\begin{array}{c}
z_1+z_2...+z_n=w_1\\
z_1^2+z_2^2...+z_n^2=w_n^2 \\
.... \\
z_1^n+z_2^n...+z_n^n=w_n^n
\end{array}
\right.
\end{eqnarray} 
have a solution vector $z=[z_1,z_2,...,z_n]^T$, then the $\mu (z)$ is unique.
\end{lemma}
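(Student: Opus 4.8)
The plan is to recognize the system (\ref{0814_2214}) as a system of power-sum equations and to invoke the classical Newton--Girard identities, which convert power sums into elementary symmetric polynomials. Writing $p_k = z_1^k + \cdots + z_n^k$ for the $k$-th power sum and $e_k$ for the $k$-th elementary symmetric polynomial in $z_1, \ldots, z_n$, the hypothesis fixes $p_1, \ldots, p_n$ to the prescribed right-hand sides. The goal is to argue that these data determine the multiset $\{z_1, \ldots, z_n\}$ completely, from which the uniqueness of the rearrangement $\mu(z)$ follows immediately: any two solution vectors would then be permutations of one another, and therefore share the same sorted vector of absolute values.

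First I would record Newton's identity in the form $k e_k = \sum_{i=1}^{k}(-1)^{i-1} e_{k-i} p_i$ (with $e_0 = 1$) for $1 \le k \le n$. Reading this as a recursion in $k$, each $e_k$ is expressed through $p_1, \ldots, p_k$ together with the previously determined $e_1, \ldots, e_{k-1}$; since the leading coefficient is the integer $k$, which is invertible over $\mathbb{R}$, the recursion can be solved uniquely. By induction on $k$ this shows that $e_1, \ldots, e_n$ are uniquely determined by $p_1, \ldots, p_n$, hence by the fixed right-hand sides of (\ref{0814_2214}).

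Next I would form the monic polynomial $P(x) = x^n - e_1 x^{n-1} + e_2 x^{n-2} - \cdots + (-1)^n e_n$, whose coefficients are now pinned down. Because $P(x) = \prod_{i=1}^n (x - z_i)$, the fundamental theorem of algebra guarantees that the multiset of roots of $P$, counted with multiplicity, is uniquely determined by its coefficients, and this multiset is exactly $\{z_1, \ldots, z_n\}$. Consequently any two solution vectors of (\ref{0814_2214}) consist of the same entries up to reordering; in particular their absolute values agree as multisets, so their nonincreasing rearrangements coincide and $\mu(z)$ is unique.

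The step I expect to require the most care is the inversion of the power sums, that is, establishing that $p_1, \ldots, p_n$ uniquely fix $e_1, \ldots, e_n$; everything afterwards is the routine passage from coefficients to root multiset to sorted absolute values. The essential point making this inversion valid is that we work in characteristic zero, so the factor $k$ appearing in Newton's identity never vanishes; were this not so the recursion could fail to be solvable and the multiset need not be determined.
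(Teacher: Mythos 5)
Your proposal is correct and follows essentially the same route as the paper: both arguments recover the elementary symmetric polynomials from the prescribed power sums via a Newton-type recursion and then read off the multiset $\{z_1,\ldots,z_n\}$ as the root multiset of the resulting monic polynomial. Your appeal to the standard Newton--Girard identities with the correct signs and the explicit invertibility of the factor $k$ in characteristic zero is in fact a cleaner rendering of the recursion the paper derives by hand.
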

\begin{proof}
We denote that
\begin{eqnarray}
T_k=\sum_{\substack{i_1,i_2,...i_k\in [n]\\ i_a \neq i_b \\ a,b \in [k]}} z_{i_1}z_{i_2}...z_{i_k}
\end{eqnarray}
and
\begin{eqnarray}
U_k=\sum \limits_{i=1}^n z_i^k
\end{eqnarray}
By the binomial theorem, we have that
\begin{eqnarray}
\prod _{i=1}^n(z-z_i)=z^n+\sum_{k=1}^n T_kz^{n-i}
\end{eqnarray}
Therefore, in order to prove this lemma, it is enough to prove that $T_k$ can be represented uniquely by $U_1,...U_n$. We use mathematical induction to prove this conclusion. Notice that
\begin{eqnarray}
T_2=(U_1)^2-U_2
\end{eqnarray}
Assume that there exists a function $f_k(\cdot)$ such that
\begin{eqnarray}
T_k=f_k(U_1,U_2,...U_k)
\end{eqnarray}
It is easy to get that
\begin{eqnarray}
\notag T_k\cdot U_1&=&(k+1)T_{k+1}+\sum_{\substack{i_1,i_2,...i_k\in [n]\\ i_a \neq i_b \\ a,b \in [k]}} z_{i_1}^2z_{i_2}...z_{i_k} \\
\notag &=& (k+1)T_{k+1}+\sum\limits_{i=1}^n z_i^2 \sum_{\substack{i_1,i_2...i_{k-1}\in [n]\setminus i \\ i_a \neq i_b \\ a,b\in [k-1]}} z_{i_1}z_{i_2}...z_{i_{k-1}} \\
\notag &=& (k+1)T_{k+1}+\sum\limits_{i=1}^n z_i^2 \left(T_{k-1}-z_iT_{k-2}\right) \\
&=& (k+1)T_{k+1}+T_{k-1}U_2-T_{k-2}U_3
\end{eqnarray}
Therefore, we can conclude that
\begin{eqnarray}
T_{k+1}=\frac{1}{k+1}\left(T_{k-1}U_2-T_{k-2}U_3-T_kU_1\right)
\end{eqnarray}
The proof is complicated.
\end{proof}
\begin{lemma}\label{1219_2132}
For fixed vectors $x$,$y\in \mathbb{R}^n$ and $0<q\leq 1$, if there exist $(0,p^*]$ such that
\begin{eqnarray}
\|x\|_{h_{p,q}}=\|y\|_{h_{p,q}}
\end{eqnarray} 
whenever $p\in (0,p^*]$, then $\mu (x)=\mu (y).$
\end{lemma}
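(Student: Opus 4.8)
The plan is to reduce the transcendental identity $\|x\|_{h_{p,q}}=\|y\|_{h_{p,q}}$ to a purely algebraic identity between two monic polynomials, and then to invoke Lemma \ref{1219_2121}. Writing $a_i=|x_i|^q$ and $b_i=|y_i|^q$, the hypothesis reads $\sum_{i=1}^n\log(1+a_i/p)=\sum_{i=1}^n\log(1+b_i/p)$ for every $p\in(0,p^*]$. First I would exponentiate both sides to linearize the logarithms into products, obtaining $\prod_{i=1}^n(1+a_i/p)=\prod_{i=1}^n(1+b_i/p)$ on the whole interval $(0,p^*]$.

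The decisive step is to clear denominators: multiplying by $p^n$ gives
\[
\prod_{i=1}^n (p+a_i)=\prod_{i=1}^n (p+b_i)
\]
for all $p\in(0,p^*]$. Both sides are monic polynomials in $p$ of degree $n$, and they agree at the infinitely many points of the interval $(0,p^*]$; hence they are the same polynomial. Comparing coefficients, the elementary symmetric functions of $(a_1,\ldots,a_n)$ and of $(b_1,\ldots,b_n)$ coincide to every order.

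From equal elementary symmetric functions I would pass to equal power sums by Newton's identities, so that $\sum_{i=1}^n a_i^k=\sum_{i=1}^n b_i^k$ for all $1\le k\le n$. At this point Lemma \ref{1219_2121} applies directly: the two vectors $(a_1,\ldots,a_n)$ and $(b_1,\ldots,b_n)$ are solutions of the same system of power-sum equations, so their nonincreasing rearrangements agree, i.e. $\mu(a)=\mu(b)$. Finally, because $0<q\le 1$ the map $t\mapsto t^q$ is a strictly increasing bijection of $[0,\infty)$ onto itself, so sorting the $a_i$ and the $b_i$ is the same as sorting the $|x_i|$ and the $|y_i|$; taking $q$-th roots entrywise in $\mu(a)=\mu(b)$ yields $\mu(x)=\mu(y)$, as required.

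I expect the only genuinely delicate point to be the passage from ``equality of two smooth functions of $p$ on an interval'' to ``equality of polynomials'': the exponentiation-and-clearing-denominators maneuver is exactly what converts the analytic hypothesis into the algebraic identity that Lemma \ref{1219_2121} is designed to handle. Everything downstream — coefficient matching, Newton's identities, and the monotone change of variables $t\mapsto t^q$ — is routine.
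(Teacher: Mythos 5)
Your argument is correct, and it reaches the conclusion by a genuinely different road than the paper. The paper's proof treats $g(p)=\|x\|_{h_{p,q}}-\|y\|_{h_{p,q}}$ as a smooth function vanishing identically on $(0,p^*]$ and differentiates it repeatedly in $p$, obtaining $\sum_{i=1}^n(p+|x_i|^q)^{-j}=\sum_{i=1}^n(p+|y_i|^q)^{-j}$ for every positive integer $j$, and then applies Lemma \ref{1219_2121} to the transformed quantities $z_i=(p+|x_i|^q)^{-1}$. You instead exponentiate once and clear denominators, turning the hypothesis into an identity of monic degree-$n$ polynomials $\prod_i(p+|x_i|^q)=\prod_i(p+|y_i|^q)$ on an interval, hence everywhere. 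Your route is more elementary and in one respect stronger: it uses no differentiation, so it would already work if the hypothesis held at only $n+1$ distinct values of $p$ rather than on a whole interval, and it avoids the cancellation of the $-1/p$ terms on which the paper's passage from $g'(p)=0$ to $\sum_i(p+|x_i|^q)^{-1}=\sum_i(p+|y_i|^q)^{-1}$ silently relies. It is also essentially self-contained: once the two monic polynomials are equal, their multisets of roots $\{-|x_i|^q\}$ and $\{-|y_i|^q\}$ coincide automatically, so your detour through Newton's identities and Lemma \ref{1219_2121} --- while perfectly valid --- is not even needed. Your closing step, undoing the strictly increasing change of variables $t\mapsto t^q$ to pass from equality of the rearranged $|x_i|^q$ and $|y_i|^q$ to $\mu(x)=\mu(y)$, is handled correctly (the paper leaves this implicit). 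What the paper's version buys instead is reuse: the repeated-differentiation device is the engine of the later Lemma \ref{1219_2320}, where the relevant identity is only available along a sequence of $p$-values and the polynomial trick does not transfer as directly.
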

\begin{proof}
For such $x$,$y\in \mathbb{R}^n$ and $q$, we consider the following function
\begin{eqnarray}
g(p)=\|x\|_{h_{p,q}}-\|y\|_{h_{p,q}}
\end{eqnarray}
Without of generality, we assume $x_i$,$y_i\geq 0$. It is obvious that $g(p)$ is a smooth function when $p\in (0,p^*]$. Since $\|x\|_{h_{p,q}}=\|y\|_{h_{p,q}}$, it is easy to get that 
\begin{eqnarray}
g'(p)=\sum \limits_{i=1}^n\frac{-x_i^q}{p^2+px_i^q}-\sum \limits_{i=1}^n\frac{-y_i^q}{p^2+py_i^q}=0
\end{eqnarray}
Therefore,
\begin{eqnarray}\label{0814_2144}
\sum \limits_{i=1}^n\frac{1}{p+x_i^q}=\sum \limits_{i=1}^n\frac{1}{p+y_i^q}
\end{eqnarray}
Take derivative with respect to $p$ on both sides of the equation (\ref{0814_2144}),
\begin{eqnarray}
\sum \limits_{i=1}^n\frac{1}{(p+x_i^q)^2}=\sum \limits_{i=1}^n\frac{1}{(p+y_i^q)^2}
\end{eqnarray}
Continuous, it is easy to get that
\begin{eqnarray}\label{0814_2205}
\sum \limits_{i=1}^n\frac{1}{(p+x_i^q)^j}=\sum \limits_{i=1}^n\frac{1}{(p+y_i^q)^j}
\end{eqnarray}
for any positive integers $j\in N^+$.

Consider solving the following equations
\begin{eqnarray}
\left \{
\begin{array}{c}
z_1+z_2...+z_n=w_1\\
z_1^2+z_2^2...+z_n^2=w_n^2 \\
.... \\
z_1^n+z_2^n...+z_n^n=w_n^n
\end{array}
\right.
\end{eqnarray} 
where $w_j=\sum \limits_{i=1}^n\frac{1}{(p+y_i^q)^j}$. By lemma \ref{1219_2121}, it is obvious that both $x=(x_1,x_2,...x_n)^T$ and $y=(y_1,y_2,...y_n)^T$ are solution of equations (\ref{0814_2214}), so the vectors $x$ and $y$ share the same components values, i.e.,
\begin{eqnarray}
\mu (x)=\mu (y).
\end{eqnarray}
\end{proof}
\begin{lemma}\label{1219_2320}
For any fixed $x,y\in \mathbb{R}^n$ and $0<q\leq 1$, if $\|x\|_0=\|y\|_0$ and $\mu(x)\neq \mu (y)$, then there exist a constant $p^*_q(x,y)$ based on $x$ and $y$ such that
\begin{eqnarray}
\left(\|x\|_{h_{\alpha,q}}-\|y\|_{h_{\alpha,q}}\right)\left(\|x\|_{h_{\beta,q}}-\|y\|_{h_{\beta,q}}\right)> 0
\end{eqnarray}
for any $\alpha,\beta \in (0,p*(x,y))$ 
\end{lemma}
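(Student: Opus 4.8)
The plan is to reduce the claim to a statement about the sign of the single scalar function
\begin{eqnarray}
\notag g(p)=\|x\|_{h_{p,q}}-\|y\|_{h_{p,q}}
\end{eqnarray}
near $p=0^{+}$, namely that $g$ keeps a constant nonzero sign on some interval $(0,p^{*})$; this is exactly equivalent to the asserted inequality $g(\alpha)g(\beta)>0$ for all $\alpha,\beta\in(0,p^{*})$. First I would rewrite $g$ using $h_{p,q}(t)=\log(p+|t|^{q})-\log p$ for $t\neq 0$ and $h_{p,q}(0)=0$. Summing only over the nonzero entries gives $\|x\|_{h_{p,q}}=\sum_{x_i\neq 0}\log(p+|x_i|^{q})-\|x\|_0\log p$, and likewise for $y$. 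The hypothesis $\|x\|_0=\|y\|_0$ is used precisely here: the two singular $\log p$ terms cancel, leaving
\begin{eqnarray}
\notag g(p)=\sum_{x_i\neq 0}\log(p+|x_i|^{q})-\sum_{y_i\neq 0}\log(p+|y_i|^{q}).
\end{eqnarray}

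The key structural observation is that every summand $\log(p+c)$ with $c=|x_i|^{q}>0$ (resp. $c=|y_i|^{q}>0$) is a real-analytic function of $p$ on $(-c_0,\infty)$, where $c_0=\min\{|x_i|^{q},|y_j|^{q}:x_i\neq 0,\ y_j\neq 0\}>0$. Hence $g$ is real-analytic on the connected interval $(-c_0,\infty)$, which contains a full neighborhood of $0$. I would then invoke Lemma~\ref{1219_2132} in contrapositive form: since $\mu(x)\neq\mu(y)$, it is impossible for $g$ to vanish identically on any interval $(0,p^{*}]$, so $g$ is not the zero function on its connected domain of analyticity.

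Finally, because a real-analytic function on a connected domain is either identically zero or has only isolated zeros, the previous two facts force the zeros of $g$ to be isolated. In particular there is some $p^{*}_q(x,y)>0$ so that $g$ has no zero on $(0,p^{*}_q(x,y))$; by continuity $g$ then keeps a single constant sign on that interval, which yields $g(\alpha)g(\beta)>0$ for all $\alpha,\beta$ in it. The main obstacle is exactly the step that rules out zeros accumulating at $0$: continuity together with Lemma~\ref{1219_2132} alone only asserts that $g$ is not identically zero, so the argument genuinely needs the analytic extension across $p=0$ (made available by the cancellation of the $\log p$ terms) to upgrade ``not identically zero'' into ``isolated zeros,'' and hence into constant sign. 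A cleaner but equivalent route, should one wish to avoid the isolated-zeros theorem, is to compute the limit $g(0^{+})=q\big(\sum_{x_i\neq 0}\log|x_i|-\sum_{y_i\neq 0}\log|y_i|\big)$ and conclude by continuity when $g(0^{+})\neq 0$; in the degenerate case $g(0^{+})=0$ one expands $g$ in powers of $p$ and reads the sign off the first nonvanishing Taylor coefficient, whose existence is again guaranteed precisely by $g\not\equiv 0$.
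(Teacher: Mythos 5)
Your argument is correct, and it takes a genuinely different route from the paper's. The paper argues by contradiction: assuming the sign condition fails arbitrarily close to $0$, it extracts (via the intermediate value theorem) a sequence $\hat{p}_n\rightarrow 0$ of zeros of $g(p)=\|x\|_{h_{p,q}}-\|y\|_{h_{p,q}}$, then repeatedly applies a Rolle-type argument and passes to the limit to obtain $\sum_i |x_i|^{-j}=\sum_i |y_i|^{-j}$ for $j=0,1,\dots,n-1$, and finally invokes Lemma \ref{1219_2121} (the Newton-identity lemma) to force $\mu(x)=\mu(y)$, a contradiction. You instead observe that, because $\|x\|_0=\|y\|_0$, the singular $-\|x\|_0\log p$ and $-\|y\|_0\log p$ terms cancel, so $g$ extends to a real-analytic function on a full neighborhood of $p=0$; Lemma \ref{1219_2132} rules out $g\equiv 0$, and the identity theorem then prevents zeros of $g$ from accumulating at $0$, giving a constant sign on some $(0,p^*)$. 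Your route only uses Lemma \ref{1219_2132} (not Lemma \ref{1219_2121} directly), and it buys a cleaner and more robust argument: the paper's repeated differentiation-and-limit steps are exactly where its proof is most delicate (the interchange of limits is not justified, and the sums $\sum_i |x_i|^{-j}$ are ill-defined when some $x_i=0$), whereas the cancellation-of-$\log p$ observation sidesteps all of this. The one thing the paper's approach offers in exchange is that it avoids appealing to the isolated-zeros property of real-analytic functions, staying within elementary calculus; your closing remark about reading the sign off the first nonvanishing Taylor coefficient at $p=0$ shows the two viewpoints are ultimately reconcilable.
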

\begin{proof}
By lemma \ref{1219_2132}, we can conclude that there exists no $p^*$ such that $\|x\|_{h_{p}}=\|y\|_{h_{p}}$ for any $p\in (0,p^*)$. So there exist two sequences $\{p_n\},\{q_n\}\longrightarrow 0$ such that
 \begin{eqnarray}
\left(\|x\|_{h_{p_n,q}}-\|y\|_{h_{p_n,q}}\right)\left(\|x\|_{h_{q_n,q}}-\|y\|_{h_{q_n,q}}\right)< 0
\end{eqnarray}
Since $h_{p,q}(\cdot)$ is a continuous function, we can get a new sequence $\{\hat{p}_n\}$ such that
\begin{eqnarray}\label{1107_1155}
\|x\|_{h_{\hat{p}_n,q}}=\|y\|_{h_{\hat{p}_n,q}}
\end{eqnarray}
and $\hat{p}_n\in (min\{p_n,q_n\},max\{p_n,q_n\})$.
Rewrite (\ref{1107_1155}),
\begin{eqnarray}
\sum \limits_{i=1}^nlog(1+\hat{p}_n^{-1}|x_i|^q)=\sum \limits_{i=1}^nlog(1+\hat{p}_n^{-1}|y_i|^q)
\end{eqnarray}
Define a new function
\begin{eqnarray}
\varphi_1(p)=\sum \limits_{i=1}^nlog(1+\hat{p}_n^{-1}|x_i|^q)-\sum \limits_{i=1}^nlog(1+\hat{p}_n^{-1}|y_i|^q)
\end{eqnarray}
Since $\varphi_1 (\hat{p}_n)=0$, we can get a new sequence $\{\hat{p}_n^{1}\}\rightarrow 0$ such that $\varphi_1 '(\hat{p}_n^{(1)})=0$, i.e.,
\begin{eqnarray}
\sum \limits_{i=1}^n\frac{|x_i|^q}{|x_i|^q+\hat{p}_n^{(1)}}=\sum \limits_{i=1}^n\frac{|y_i|^q}{|y_i|^q+\hat{p}_n^{(1)}}
\end{eqnarray}
By the above inequality, we can get that 
\begin{eqnarray}
\sum \limits_{i=1}^n \frac{1}{\hat{p}_n^{(1)}}\left(1-\frac{|x_i|^q}{|x_i|^q+\hat{p}_n^{(1)}}\right)=\sum \limits_{i=1}^n \frac{1}{\hat{p}_n^{(1)}}\left(1-\frac{|y_i|^q}{|y_i|^q+\hat{p}_n^{(1)}}\right)
\end{eqnarray}
Since $\{\hat{p}_n^{1}\}\rightarrow 0$, we can get that
\begin{eqnarray}
\sum \limits_{i=1}^n\frac{1}{|x_i|}=\sum \limits_{i=1}^n\frac{1}{|y_i|}.
\end{eqnarray}
Define a new function
\begin{eqnarray}
\varphi _1(p)=\sum \limits_{i=1}^n\frac{1}{p+|x_i|}=\sum \limits_{i=1}^n\frac{1}{p+|y_i|}
\end{eqnarray}
Since  $\varphi_1(\hat{p}_n^{1})=0$, we can get a new sequence $\{\hat{p}_n^{(2)}\}\rightarrow 0$ such that $\varphi _2'(\hat{p}_n^{(2)})=0$.
It is easy to get that 
\begin{eqnarray}
\sum \limits_{i=1}^n\frac{1}{|x_i|^2}=\sum \limits_{i=1}^n\frac{1}{|y_i|^2}.
\end{eqnarray}

Repeat these action, we can get that 
\begin{eqnarray}
\sum \limits_{i=1}^n\frac{1}{|x_i|^j}=\sum \limits_{i=1}^n\frac{1}{|y_i|^j}.
\end{eqnarray}
for $j=0,1,2....n-1$.

By lemma \ref{1219_2121}, we can conclude that $x$ and $y$ share the same elements which contradicts $\mu(x)=\mu(y)$.
\end{proof}
\begin{lemma}
If $f(x)$ is a proper convex function which satisfies the following condition,
\begin{eqnarray}\label{0731_1601}
\varliminf \limits_{\lambda \rightarrow +\infty} f(x+\lambda y)< +\infty
\end{eqnarray}
for the fixed $x$ and $y$, then the function $f(x+\lambda y)$ is a decreasing function for $\lambda$.
\end{lemma}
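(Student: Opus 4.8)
The plan is to reduce the statement to a one-variable fact and then exploit the monotonicity of the chord slopes of a convex function of a single variable.

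First I would set $\phi(\lambda) := f(x+\lambda y)$. Since $f$ is convex and $\lambda \mapsto x+\lambda y$ is affine, the restriction $\phi$ is again a proper convex function on $\mathbb{R}$, now extended-real-valued. The hypothesis $\varliminf_{\lambda\to+\infty}\phi(\lambda)<+\infty$ says that $\phi$ stays finite along some sequence $\lambda_n\to+\infty$; because the effective domain of a convex function is an interval, $\phi$ is then finite on a whole ray $[c,+\infty)$. I also want to record at the outset that "decreasing" here must be read as \emph{nonincreasing}: a constant $f$ already satisfies the hypothesis and yields a constant, hence only weakly decreasing, $\phi$.

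The key tool is the three-slope (monotone-chord) inequality for convex functions: for $a<b<d$ in the effective domain,
\[
\frac{\phi(b)-\phi(a)}{b-a}\ \le\ \frac{\phi(d)-\phi(b)}{d-b}.
\]
I would argue by contradiction. Suppose $\phi$ is not nonincreasing; then there are $\lambda_1<\lambda_2$, both in the finite ray, with $\phi(\lambda_1)<\phi(\lambda_2)$, so the chord slope $c_0:=\big(\phi(\lambda_2)-\phi(\lambda_1)\big)/(\lambda_2-\lambda_1)>0$. Applying the inequality above with $a=\lambda_1$, $b=\lambda_2$, $d=\lambda$ for any $\lambda>\lambda_2$ gives $\big(\phi(\lambda)-\phi(\lambda_2)\big)/(\lambda-\lambda_2)\ge c_0$, that is, $\phi(\lambda)\ge\phi(\lambda_2)+c_0(\lambda-\lambda_2)$. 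Letting $\lambda\to+\infty$ forces $\phi(\lambda)\to+\infty$, so $\varliminf_{\lambda\to+\infty}\phi(\lambda)=+\infty$, contradicting the hypothesis. Hence $\phi$ is nonincreasing.

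The main obstacle is purely the bookkeeping of the $+\infty$ values, so that the chord-slope estimate is invoked only where $\phi$ is finite. I would dispose of it as follows. For $\lambda_1<\lambda_2$: if $\phi(\lambda_1)=+\infty$ the desired inequality $\phi(\lambda_1)\ge\phi(\lambda_2)$ is automatic; and the remaining case $\phi(\lambda_1)<+\infty=\phi(\lambda_2)$ cannot occur, since convexity would then force $\phi\equiv+\infty$ on $(\lambda_2,+\infty)$, again contradicting $\varliminf_{\lambda\to+\infty}\phi(\lambda)<+\infty$. Thus only the two harmless situations remain, and once they are cleared the monotone-slope estimate does all of the work.
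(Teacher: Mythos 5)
Your proof is correct, and it takes a genuinely different route from the paper's. Both arguments begin the same way, restricting $f$ to the line $\lambda \mapsto x+\lambda y$ and noting that $h(\lambda)=f(x+\lambda y)$ is convex in $\lambda$; they diverge at the key step. The paper extracts from the hypothesis a sequence of points $(\lambda_n,a)$ in $\mathrm{epi}\,h$ with $\lambda_n\to+\infty$ and invokes the recession-cone machinery of its Definition 1 to conclude that $(1,0)\in 0^+(\mathrm{epi}\,h)$, which is precisely the statement that $h$ is nonincreasing. You instead argue by contradiction using the elementary three-slope (monotone chord) inequality: if $h$ ever strictly increased between two points, the chord slopes to the right would be bounded below by a positive constant, forcing $h(\lambda)\to+\infty$ and contradicting the finite $\varliminf$. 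Each approach buys something. The paper's is shorter and ties into the recession-cone language it reuses in the proof of Theorem 1, but it silently relies on the equivalence of the two descriptions of $0^+T$ in Definition 1 (which is guaranteed for closed convex sets) and never discusses where $h$ may take the value $+\infty$. Your argument is self-contained, uses nothing beyond chord-slope monotonicity, and explicitly disposes of the extended-real-valued cases, in particular ruling out $h(\lambda_1)<+\infty=h(\lambda_2)$; it is the more airtight of the two. Your observation that ``decreasing'' must be read as nonincreasing is also accurate and matches what the paper actually establishes, namely $h(\lambda+\lambda_i)\le h(\lambda)$.
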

\begin{proof}
For the fixed $x$ and $y$, let $h(\lambda)=f(x+\lambda y)$. Then we will prove that $h(\lambda)$ is a convex function.

For any $\lambda _1$, $\lambda _2$ and $\lambda \in [0,1]$, it is easy to get that 
\begin{align}
h(\lambda \lambda _1+(1-\lambda)\lambda _2)&=f(x+(\lambda \lambda _1+(1-\lambda)\lambda _2)y) \nonumber \\
&=f(\lambda (x+\lambda _1y)+(1-\lambda)(x+\lambda _2y)) \nonumber \\
&\leq \lambda f(x+\lambda _1y)+(1-\lambda) f_(x+\lambda _2y) \nonumber \\
&=\lambda h(\lambda _1)+(1-\lambda)h(\lambda _2)
\end{align}
therefore, $h(\lambda)$ is a convex function.

By the condition (\ref{0731_1601}), it is easy to get that 
\begin{eqnarray}
\varliminf \limits_{\lambda \rightarrow +\infty} f(x+\lambda y)=a< +\infty
\end{eqnarray}
Therefore, there exist a sequence $(\lambda _1,a)$,$(\lambda _2,a)$...$(\lambda _n,a)$ in the set $epi \ h(\lambda)$, where $\lim \limits_{n\rightarrow +\infty}\lambda _n=+\infty$. For any $\lambda _i$ and $\lambda _j$, it is easy to get that 
\begin{eqnarray}
h(\lambda \lambda _i+(1-\lambda)\lambda _j)\leq \lambda h(\lambda _i)+(1-\lambda)h(\lambda _j)
\end{eqnarray}
So we can conclude that the set $(1,0)\in 0^+(epi h(\lambda))$ which means that 
\begin{eqnarray}
h(\lambda+\lambda _i)\leq h(\lambda)
\end{eqnarray}
for any $\lambda$ and $\lambda _i>0$
\end{proof}
\begin{theorem}\label{0123_0040}
For any given $A\in \mathbb{R}^{m\times n}$,$b\in \mathbb{R}^m$ and $0<q\leq 1$, there exists a constant $p^*(A,b,q)$ such that the solution of $l_{h_{p,q}}$-minimization is the solution $l_0$-minimization whenever $0<p<p^*(A,b,q)$.
\end{theorem}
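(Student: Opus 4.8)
The plan is to reduce the continuous optimization to a comparison over the \emph{finite} set $T(A,b)$, and then to read off the sparsity of a vector from the leading-order asymptotics of $\|x\|_{h_{p,q}}$ as $p\to 0^+$. By Lemma \ref{0731_1707}, every minimizer of the $l_{h_{p,q}}$-minimization lies in $T(A,b)$; moreover an analogous argument---perturbing a sparsest solution along a kernel vector supported inside its support to zero out one more coordinate---shows that every $l_0$-minimizer also has full-rank support, hence lies in $T(A,b)$ as well. Therefore $k^*:=\min_{x\in T(A,b)}\|x\|_0$ coincides with the optimal value of $l_0$-minimization, and since $T(A,b)$ is finite it will suffice to exhibit a single threshold $p^*$ below which no feasible vector with $\|y\|_0>k^*$ can undercut a fixed $l_0$-minimizer in the $h_{p,q}$-objective.

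The heart of the argument is an asymptotic expansion. For $x$ with support $S=\text{supp}(x)$ of size $k$, I would write
\[
\|x\|_{h_{p,q}}=\sum_{i\in S}\log\Big(\tfrac{p+|x_i|^q}{p}\Big)=-k\log p+\sum_{i\in S}\log\big(p+|x_i|^q\big),
\]
and use $\log(p+|x_i|^q)=q\log|x_i|+O(p)$ as $p\to 0^+$, valid because $x_i\neq 0$ on $S$, to obtain
\[
\|x\|_{h_{p,q}}=-\|x\|_0\,\log p+q\!\sum_{i\in S}\log|x_i|+o(1).
\]
Thus the coefficient of the divergent quantity $-\log p$ is \emph{exactly} the $0$-norm. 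Consequently, whenever $\|y\|_0>\|x\|_0$ one has $\|y\|_{h_{p,q}}-\|x\|_{h_{p,q}}=-(\|y\|_0-\|x\|_0)\log p+O(1)\to+\infty$, so the strictly sparser vector eventually attains the strictly smaller objective.

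To conclude, I would fix one $l_0$-minimizer $x^*\in T(A,b)$ with $\|x^*\|_0=k^*$. For each of the finitely many $y\in T(A,b)$ with $\|y\|_0>k^*$, the expansion above supplies a threshold $p(y)>0$ with $\|x^*\|_{h_{p,q}}<\|y\|_{h_{p,q}}$ for $0<p<p(y)$. Setting $p^*(A,b,q):=\min_y p(y)>0$, for every $0<p<p^*$ any $h_{p,q}$-minimizer $x$ obeys $\|x\|_{h_{p,q}}\le\|x^*\|_{h_{p,q}}<\|y\|_{h_{p,q}}$ for all such $y$; hence $x$ is none of them, and since $x\in T(A,b)$ by Lemma \ref{0731_1707} this forces $\|x\|_0=k^*$, i.e. $x$ solves $l_0$-minimization.

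I expect the main difficulty to be bookkeeping rather than conceptual: making the $O(p)$ remainders uniform across the finite family so that $p^*=\min_y p(y)$ is genuinely positive, and checking that a minimizer of the nonconvex objective exists at all (coercivity on the affine feasible set, since each summand $\log(1+|x_i|^q/p)\to\infty$ as $|x_i|\to\infty$). Note that Lemma \ref{1219_2320} is \emph{not} needed for the sparsity conclusion---only the leading $-\|x\|_0\log p$ term drives the ordering---though it is exactly the tool one would invoke to stabilize the \emph{identity} of the minimizer within a single sparsity level, should a sharper statement be required.
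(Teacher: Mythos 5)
Your argument is correct, and it takes a genuinely different route from the paper's. The paper lifts both problems to $\mathbb{R}^{2n}$ via slack variables and the box constraint $\|x\|_\infty\le r(A,b)$, decomposes the resulting polytope $S(A,b)$ into vertices, extreme rays and lineality space, reduces the minimization to the finite vertex set $E(A,b)$, and then compares vertices pairwise using the recession-cone lemma, Lemma \ref{1219_2132} and Lemma \ref{1219_2320} (the delicate equal-sparsity case resting on the symmetric-function argument of Lemma \ref{1219_2121}). You instead reduce directly to the finite set $T(A,b)$ via Lemma \ref{0731_1707} and existence of a minimizer by coercivity, and then read off sparsity from the exact expansion $\|x\|_{h_{p,q}}=-\|x\|_0\log p+q\sum_{i\in \mathrm{supp}(x)}\log|x_i|+O(p)$, so that a strict gap in $\|\cdot\|_0$ forces the objective gap to diverge like $-\log p$. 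This is shorter and more transparent, and you are right that Lemmas \ref{1219_2121}--\ref{1219_2320} are not needed for the theorem as stated: they only control which vector \emph{within} the optimal sparsity level is selected, which the conclusion does not require. What the paper's heavier polytope machinery buys is reusability: in the noisy corollary and the TDOA model the feasible region is a genuine polytope (inequality constraints), where the candidate set of full-rank-support solutions is no longer obviously finite, and the vertex decomposition is what restores finiteness there; your argument as written covers the equality-constrained case, and would need the vertex-enumeration step (or some substitute) to extend to those variants. One small point to make explicit if you write this up: the $O(p)$ remainder is $\sum_{i\in S}\log(1+p/|x_i|^q)$, which for each \emph{fixed} $y\in T(A,b)$ is bounded by $np/\min_{i\in S}|y_i|^q$, so each threshold $p(y)$ is positive and the minimum over the finite set $T(A,b)$ is positive; no uniformity beyond finiteness is required.
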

\begin{proof}
By Lemma \ref{0731_1707}, both the solutions of $l_0$-minimization and $l_{h_{p,q}}$-minimization are linear representation of  linearly independent column vectors. It is easy to get that $l_0$-minimization is equivalent to the following optimal problem,
\begin{eqnarray}\label{1219_2301}
\notag & \min\limits_{x\in \mathbb{R}^n} \|x\|_0\\
&s.t.\quad
\begin{cases}
Ax=b \\
\|x\|_{\infty}\leq r(A,b).
\end{cases}
\end{eqnarray}
and $l_{h_{p,q}}$-minimization is equivalent to the following optimal problem,
\begin{eqnarray}\label{1219_2310}
\notag & \min\limits_{x\in \mathbb{R}^n} \|x\|_{h_{p,q}}\\
&s.t.\quad
\begin{cases}
Ax=b \\
\|x\|_{\infty}\leq r(A,b).
\end{cases}
\end{eqnarray}
where $r(A,b)$ is defined in. Let $ e_{n}\in \mathbb{R}^{n}$ be a vector whose elements are all one, and $sign(\boldsymbol x)=[sign(x_1),sign(x_2),...,sign(x_n)]^T$. So we can rewrite the model (\ref{1219_2301}) as
\begin{equation}
\begin{split}
&\min\,\,  < e_{n},sign(z)> \\
& s.t.\quad
\begin{cases}
A x= b\\
- z\leq  x \leq  z\\
-r(A,b) e_n\leq  x \leq r(A,b) e_n
\end{cases}
\end{split}
\end{equation}
We also can rewrite model (\ref{1219_2310}) as
\begin{equation}
\begin{split}
&\min\,\,  \sum \limits_{i=1}^n h_{p,q}(z_i) \\
& s.t.\quad
\begin{cases}
Ax=b\\
- z\leq  x \leq  z\\
-r(A,b) e_n\leq  x \leq r(A,b) e_n
\end{cases}
\end{split}
\end{equation}
It is easy to find that the above two models have the same constrained domain $S(A,b)$,
\begin{eqnarray}
S(A,b)=\{( x^T, z^T)^T\in \mathbb R^{2n}| A x= b, \  \ - z\leq  x \leq  z, \  \
-r(A, b) e_n\leq x \leq r(A, b) e_n\}
\end{eqnarray}
Furthermore, we can conclude that the set $S(A,b)$ is a polygon because we can rewrite this set as the following
$S(A,b)=\{t | Q t \leq B\}$, where $t=( x^T, z^T)^T$,
\begin{equation}       
Q=\left(                 
  \begin{array}{cc}   
    A & \boldsymbol 0_{m\times n} \\  
    -A & \boldsymbol 0_{m\times n} \\
    I_n & -I_n \\
    -I_n & -I_n \\
    I_n & \boldsymbol 0_n  \\
    -I_n & \boldsymbol 0_n  \\
    \boldsymbol 0_n & -I_n \\
    \boldsymbol 0_n &  I_n
  \end{array}
\right)        
and \ B=\left(
\begin{array}{c}
\boldsymbol b \\
-\boldsymbol b \\
\boldsymbol 0_n \\
\boldsymbol 0_n \\
r(A,\boldsymbol b)\boldsymbol e_n \\
-r(A,\boldsymbol b)\boldsymbol e_n \\
\boldsymbol 0_n \\
r(A,\boldsymbol b)\boldsymbol e_n
\end{array}
\right)         
\end{equation}
It is obvious that 
\begin{eqnarray}
S(A,b)=S(A,b)\cap L(A,b)^{\perp}+L(A,b)
\end{eqnarray}
where $L(A,b)$ is the linear space of $S(A,b)$. On the other hand, we also can rewrite $S(A,b)$ as
\begin{eqnarray}
S(A,b)=\left\{\sum \limits_{i=1}^s\lambda_i\alpha_i+\sum \limits_{j=1}^wr_j\beta_j+\sum \limits_{k=1}^ul_k\gamma_j\bigg |\sum \limits_{i=1}^s\lambda_i =1,\ \lambda_i \geq 0, \ r_j\geq 0, \ l_k\in \mathbb{R}\right\}
\end{eqnarray}
Therefore, we have that 
\begin{eqnarray}
L(A,b)=span\{\gamma_1,...,\gamma_u\}
\end{eqnarray}
Let 
\begin{eqnarray}
E(A,b)=\{\alpha_1,...,\alpha_s\}
\end{eqnarray}

For any $t=(x^T,z^T)^T\in S(A,b)$, we define
\begin{eqnarray}
F_{p,q}(t)=\|z\|_{h_{p,q}}
\end{eqnarray}
It is obvious that $F_{p,q}(t)$ is a convex function and $F_{p,q}(t)\leq n\cdot h_{p,q}(r(A,b))$. By Lemma, we can get that 
\begin{eqnarray}
F_{p,q}(t)=F_{p,q}(t+\hat{t})
\end{eqnarray}
where $\hat{t} \in L(A,b)$.

By Lemma, 
\begin{eqnarray}
F_{p,q}(t)\leq F_{p,q}(t+\dot{t})
\end{eqnarray}
where $\dot{t}\in \left\{ \sum \limits_{j=1}^wr_j\beta_j \bigg | w_j\geq 0\right\}$
Therefore,
\begin{eqnarray}
\min \ F_{p,q}(t)\ \ s.t. \ t\in S(A,b)
\end{eqnarray}
is equal to
\begin{eqnarray}
\min \ F_{p,q}(t)\ \ s.t. \ t\in E(A,b)=\{\alpha_1,...,\alpha_s\}
\end{eqnarray}
For any $\alpha_i,\alpha_j\in E(A,b)$, if $\|\alpha_i\|_0\leq \|\alpha_j\|_0+1$ then there exists a constant $p_{\alpha_i,\alpha_j}$ such that $F_{p,q}(\alpha_i)<F_{p,q}(\alpha_j)$ whenever $0<p<p_{\alpha_i,\alpha_j}$.

By Lemma, if $\|\alpha_i\|_0= \|\alpha_j\|_0$ and $\mu(\alpha_i)\neq \mu(\alpha_j)$, then there exists a constant $p_{\alpha_i,\alpha_j}$ such that $F_{p,q}(\alpha_i)<F_{p,q}(\alpha_j)$ whenever $0<p<p_{\alpha_i,\alpha_j}$.

So for any we can define $p_{\alpha_i,\alpha_j}$
\begin{eqnarray}
p^*(A,b,q)=\min _{\alpha_i,\alpha_j\in E(A,b)} p_{\alpha_i,\alpha_j}
\end{eqnarray}
By the definition of $p^*(A,b,q)$ and $S(A,b)$, there exists a vector $\alpha_i\in E(A,b)$ such that 
\begin{eqnarray}
\|(\alpha_i)_{[n]}\|_{h_{p,q}}\leq \|x\|_{h_{p,q}}
\end{eqnarray}
for any $0<p<p^*_q(A,b)$ and any $x$ with $Ax=b$. Let $p$ tend to $0$, we can get that $\|(\alpha_i)_{[n]}\|_0\leq \|x\|_0$, i.e., $(\alpha_i)_{[n]}$ is the solution of both $l_{h_{p,q}}$-minimization and $l_0$-minimization.

The proof is complicated.
\end{proof}

As the first main theorem of this paper, we prove the equivalence under the linear equality constraint. In many applications, there always exists noises in the measurement vector $b$, so the responding model can be explained as the following,
\begin{eqnarray}\label{l0-e-model}
\min\limits_{x\in \mathbb{R}^n} \|x\|_{0} \ \ s.t. \|Ax-b\|_{f}\leq \varepsilon
\end{eqnarray}
where the function $\|\cdot\|$ is a certain norm function, and the following substitute model is
\begin{eqnarray}\label{lp-e-model}
\min\limits_{x\in \mathbb{R}^n} \|x\|_{h_{p,q}} \ \ s.t. \|Ax-b\|_{f}\leq \varepsilon
\end{eqnarray}
In the following corollaries, we show the equivalence relationship between when we consider $1$-norm and $\infty$-norm.
\begin{corollary}
For any given $A\in \mathbb{R}^{m\times n}$,$b\in \mathbb{R}^m$ and $0<q\leq 1$, there exists a constant $p^*_f(A,b,q)$ such that the solution of model (\ref{lp-e-model}) is the solution model (\ref{l0-e-model}) whenever $0<p<p^*(A,b,q)$.
\end{corollary}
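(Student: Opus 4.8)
The plan is to reduce the noisy statement to the machinery already assembled for Theorem \ref{0123_0040}, since the only structural change is the feasible region. The key observation is that for both $f=\|\cdot\|_1$ and $f=\|\cdot\|_\infty$ the constraint $\|Ax-b\|_f\le\varepsilon$ cuts out a convex polyhedron. For the $\infty$-norm this is immediate: $\|Ax-b\|_\infty\le\varepsilon$ is exactly $-\varepsilon e_m\le Ax-b\le\varepsilon e_m$, i.e. $2m$ linear inequalities. For the $1$-norm one uses that $\|v\|_1\le\varepsilon$ holds iff $\langle s,v\rangle\le\varepsilon$ for every sign pattern $s\in\{-1,1\}^m$, so the feasible set is an intersection of $2^m$ half-spaces. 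In either case the feasible set is a polyhedron, which is all that the subsequent extreme-point argument actually requires.

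First I would re-establish the two preliminary facts used in Theorem \ref{0123_0040}. The full-rank Lemma \ref{0731_1707} carries over verbatim: if $x^*$ solves (\ref{lp-e-model}) and $A_{S}$ with $S=support(x^*)$ were rank deficient, pick $h\in Ker(A)$ with $support(h)\subseteq S$; then $A(x^*\pm\alpha h)-b=Ax^*-b$, so feasibility is untouched, and the concavity of $h_{p,q}$ on each half-line gives the same strict contradiction. Consequently every solution has full-rank support, and the same is true of the sparsest feasible vector (otherwise its support could be thinned inside $Ker(A)$). This lets me recover a bound playing the role of $r(A,b)$: for each fixed support $S$ with $A_S$ full rank the map $x_S\mapsto A_Sx_S$ is injective, so the slice $\{x_S:\|A_Sx_S-b\|_f\le\varepsilon\}$ is compact; taking the largest $\|x\|_\infty$ over the finitely many such slices yields a constant $r_\varepsilon(A,b,q)$, and I may append the box constraint $\|x\|_\infty\le r_\varepsilon$ to both (\ref{l0-e-model}) and (\ref{lp-e-model}) without changing their solution sets.

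With the box constraint in place I would lift to the variables $t=(x^T,z^T)^T$ with $-z\le x\le z$ exactly as in the proof of Theorem \ref{0123_0040}. The feasible region $S_\varepsilon(A,b)$ is again a polyhedron $\{t:Qt\le B\}$, where $Q$ and $B$ are the matrices from that proof with the rows encoding $Ax=b$ replaced by the finitely many rows encoding $\|Ax-b\|_f\le\varepsilon$. Decomposing $S_\varepsilon(A,b)$ into its extreme points $\{\alpha_i\}$, recession-cone generators $\{\beta_j\}$ and lineality generators $\{\gamma_k\}$, the recession-cone lemma and the lineality invariance show that $\min F_{p,q}$ is attained at some extreme point; then Lemma \ref{1219_2320} together with the sparse-count comparison gives, for all small enough $p$, that the minimizing vertex is the $l_0$-sparsest feasible point. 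Setting $p^*_f(A,b,q)$ to be the minimum of the finitely many thresholds $p_{\alpha_i,\alpha_j}$ finishes the argument.

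The genuinely new point — and the only place where care is needed — is the boundedness step: in the equality case $T(A,b)$ was automatically a finite set, whereas here the feasible region is full-dimensional, so compactness must be imported from the full-rank restriction rather than assumed. Everything downstream of this (the polyhedral extreme-point decomposition, the recession and lineality lemmas, and the $p\to0$ limiting comparison) depends only on the polyhedral shape of the feasible set and on the properties of $h_{p,q}$, not on whether the data-fidelity constraint is an equality or a polyhedral inequality, so those steps transfer without modification.
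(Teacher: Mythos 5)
Your proposal is correct and follows essentially the same route as the paper: rewrite the $1$-norm ball via all sign patterns (the paper's matrix $\Phi$) and the $\infty$-norm ball as a box of linear inequalities, carry over the full-rank support lemma to obtain a bound replacing $r(A,b)$, and then invoke the polyhedral extreme-point machinery of Theorem \ref{0123_0040}. The only cosmetic difference is that you obtain boundedness from compactness of the full-rank support slices while the paper uses an explicit $\lambda_{\min}$ estimate; both serve the identical purpose.
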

\begin{proof}
Recall the proof of Theorem \ref{0123_0040}, the key component of proof is rewrite $Ax=b$ as a convex polytopes. Therefore it is enough to rewrite $\|Ax-b\|_1\leq \varepsilon $ and $\|Ax-b\|_{\infty}\leq \varepsilon$ as convex polygons.  

Consider the following set,
\begin{eqnarray}
\Omega=\{x\in \mathbb{R}^n|x_i\in \{0,1\},\ i\in[n]\}
\end{eqnarray}
It is obvious the set $\Omega$ is a finite set with $|\Omega|=2^n$, so let $\Omega=\{x^{(1)},x^{(2)},...,x^{(2^n)}\}$. Consider to construct a $\Phi=(x^{(1)},x^{(2)},...,x^{(2^n)})^T \in \mathbb{R}^{2^n\times n}$, then we can rewrite $\|Ax-b\|_1\leq \varepsilon$ as the following inequality,
\begin{eqnarray}
\Phi(Ax-b)\leq \varepsilon \boldsymbol 1
\end{eqnarray}
where $\boldsymbol 1 \in \mathbb{R}^n$ and $\Phi \in \mathbb{R}^{2^n\times n}$. 
It is easy to get that 
\begin{eqnarray}
\left\{x|\|Ax-b\|_{\infty}\leq \varepsilon\right\}
\end{eqnarray}
is equal to 
\begin{eqnarray}
-\varepsilon \boldsymbol 1+b\leq Ax \leq \varepsilon \boldsymbol 1+b
\end{eqnarray}
Similar to Lemma \ref{0731_1707}, it is obvious that the sub-matrix $A_{support(x^*)}$ is a full rank matrix, where $x^*$ is the solution of model (\ref{l0-e-model}) or model (\ref{lp-e-model}). Therefore, we can get that 
\begin{eqnarray}
\lambda_{min}\|x^*\|_2\leq \|Ax^*\|_2\leq C(f)\|Ax^*\|_f\leq C(f)\left(\|Ax^*\|_f+\|b\|_f\right)
\end{eqnarray} 
where $C(f)$ is a constant which based on the norm function $\|\cdot\|_f$ in model (\ref{lp-e-model}), so we can get that 
\begin{eqnarray}
0<\lambda_{min}=\min \limits_{T\in[n]} \lambda_{min}(A^T_TA_T) \ s.t. \ rank(A_T)=|T|
\end{eqnarray}
Therefore, it is obvious that the constraints in model (\ref{l0-e-model}) and model (\ref{lp-e-model}) are polytopes. Similar to the proof of Theorem \ref{0123_0040}, we can get the conclusion of this corollary.
\end{proof}
\subsection{The recovery condition of $l_{h_{p,q}}$-minimization}
There are a lot of paper focus on the recovery condition of alternative models. Except RIP, Null Space condition provides a sufficient and necessary condition for $l_1$-minimization. In this section, we present a sufficient and necessary condition for $l_{h_{p,q}}$-minimization. With presenting the matrix which satisfies such condition, we also give the stable result of $l_{h_{p,q}}$-minimization. Before the main theorem, the following lemmas are necessary. 
\begin{lemma}
For any fixed $0<q\leq 1$ and $0<p$, we have that
\begin{eqnarray}
h_{p,q}(x+y)\leq h_{p,q}(x)+h_{p,q}(y)
\end{eqnarray}
\end{lemma}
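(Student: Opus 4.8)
The plan is to reduce the claimed subadditivity to an elementary fact about the concave power function $t\mapsto t^q$. Since $h_{p,q}(x)=\log(1+|x|^q/p)$ and $\log$ is strictly increasing, exponentiating both sides shows that the inequality $h_{p,q}(x+y)\le h_{p,q}(x)+h_{p,q}(y)$ is equivalent to
\begin{eqnarray}
1+\frac{|x+y|^q}{p}\le \left(1+\frac{|x|^q}{p}\right)\left(1+\frac{|y|^q}{p}\right).
\end{eqnarray}
Expanding the right-hand side, cancelling the common constant $1$, and multiplying through by $p>0$, I would observe that it suffices to establish
\begin{eqnarray}
|x+y|^q\le |x|^q+|y|^q+\frac{|x|^q|y|^q}{p}.
\end{eqnarray}

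The key simplification is that the cross term $|x|^q|y|^q/p$ is nonnegative, so it is enough to prove the stronger-looking but standard estimate $|x+y|^q\le |x|^q+|y|^q$. First I would apply the ordinary triangle inequality $|x+y|\le |x|+|y|$ together with the monotonicity of $t\mapsto t^q$ on $[0,\infty)$ to reduce to the purely nonnegative statement $(a+b)^q\le a^q+b^q$, where $a=|x|$ and $b=|y|$. Assuming $a+b>0$ (the case $a=b=0$ being trivial), I would then invoke the elementary inequality $t^q\ge t$ valid for $t\in[0,1]$ and $0<q\le 1$, applied to $t=a/(a+b)$ and $t=b/(a+b)$; summing these two gives
\begin{eqnarray}
\frac{a^q}{(a+b)^q}+\frac{b^q}{(a+b)^q}\ge \frac{a}{a+b}+\frac{b}{a+b}=1,
\end{eqnarray}
and multiplying by $(a+b)^q$ yields $a^q+b^q\ge (a+b)^q$, as required.

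I do not expect a genuine obstacle here: the entire content of the lemma rests on the concavity inequality $t^q\ge t$ on $[0,1]$, which is itself just a restatement of $0<q\le 1$. The only care needed is bookkeeping, namely keeping the direction of each inequality straight and handling the degenerate cases ($x=0$, $y=0$, or $x+y=0$) separately. It is worth emphasizing in the write-up that the extra nonnegative term $|x|^q|y|^q/p$ makes the final inequality strictly weaker than pure power subadditivity, so that $p>0$ is used only to clear denominators and the essential hypothesis is $q\le 1$.
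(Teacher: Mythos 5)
Your proof is correct, but it takes a different route from the paper. You exponentiate the claimed inequality, factor the right-hand side as $\bigl(1+|x|^q/p\bigr)\bigl(1+|y|^q/p\bigr)$, discard the nonnegative cross term $|x|^q|y|^q/p$, and reduce everything to the classical power subadditivity $(a+b)^q\leq a^q+b^q$ for $0<q\leq 1$, which you prove via $t^q\geq t$ on $[0,1]$. The paper instead fixes $x>0$, sets $f(y)=h_{p,q}(x+y)-h_{p,q}(x)-h_{p,q}(y)$, observes $f(0)=0$, and argues $f'(y)\leq 0$ because the map $t\mapsto qt^{q-1}/(p+t^q)$ is decreasing (its displayed derivative actually contains a typo, writing $p+(x+y)^{q-1}$ where the denominator should be $p+(x+y)^q$, and it leaves the monotonicity of that map unjustified). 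Your argument is entirely algebraic, needs no differentiability, and makes explicit that $p>0$ serves only to clear denominators while the substance lies in $q\leq 1$; it also shows the inequality is strict whenever $x$ and $y$ are both nonzero, since the discarded cross term is then positive. The paper's calculus argument is shorter on the page but, as written, is both less rigorous and less informative about where each hypothesis is used.
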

\begin{proof}
By the definition of $h_{p,q}()$, it is enough to consider the case when $x,y>0$. For any fixed $0<q\leq 1$, $0<p$ and $x>0$. let
\begin{eqnarray}
f(y)=h_{p,q}(x+y)-h_{p,q}(x)-h_{p,q}(y)
\end{eqnarray}
It is obvious that $f(0)=0$ and $f'(y)\leq 0$ since
\begin{eqnarray}
f'(y)=\frac{q(x+y)^{q-1}}{p+(x+y)^{q-1}}-\frac{qy^{q-1}}{p+y^q}
\end{eqnarray}
\end{proof}
Now, the following theorem presents a sufficient and necessary condition for $l_{h_{p,q}}$-minimization.
\begin{theorem}\label{0123_0205}
For any fixed $0<q\leq 1$ and $0<p$, if every $k$-sparse vector $x$ can be recovered by $l_{h_{p,q}}$-minimization then the following inequalities holds
\begin{eqnarray}\label{1218_1217}
\|x_{S}\|_{h_{p,q}}<\|x_{S^c}\|_{h_{p,q}}
\end{eqnarray}
for any non-zero vector $x\in N(A)$ and any index set $S\in [n]$ with $|S|\leq k$.
\end{theorem}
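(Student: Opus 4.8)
The plan is to prove the stated implication by contraposition. I will assume the null-space inequality fails and manufacture a $k$-sparse vector that the $l_{h_{p,q}}$-minimization does not recover, which directly negates the hypothesis. To keep the roles clear I will write the null-space vector as $v$ (reserving $x$ for the signal to be recovered): suppose there is a nonzero $v\in N(A)$ and an index set $S\subseteq[n]$ with $|S|\le k$ for which $\|v_S\|_{h_{p,q}}\ge\|v_{S^c}\|_{h_{p,q}}$. The goal is to show that under this assumption some genuine $k$-sparse signal cannot be recovered.

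The construction is the heart of the argument. Set $x:=v_S$; since $|S|\le k$ this $x$ is $k$-sparse, and I take $b:=Ax=Av_S$ as its measurement. The crucial identity is that $v\in N(A)$ forces $Av_S=-Av_{S^c}$, so the vector $w:=-v_{S^c}$ also satisfies $Aw=b$ and is feasible for the same problem. Because $h_{p,q}$ depends only on $|x_i|$ (so $h_{p,q}(-t)=h_{p,q}(t)$) and $\|\cdot\|_{h_{p,q}}$ is additive over the disjoint supports $S$ and $S^c$, I can read off $\|x\|_{h_{p,q}}=\|v_S\|_{h_{p,q}}$ and $\|w\|_{h_{p,q}}=\|v_{S^c}\|_{h_{p,q}}$. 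The assumed failure of the inequality then gives $\|w\|_{h_{p,q}}=\|v_{S^c}\|_{h_{p,q}}\le\|v_S\|_{h_{p,q}}=\|x\|_{h_{p,q}}$, i.e. the feasible competitor $w$ is at least as good as $x$.

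To finish I conclude that $x$ is not recovered. Since $v\ne 0$ and $x,w$ live on the disjoint index sets $S,S^c$, one verifies $w\ne x$ (equality would force both $v_S=0$ and $v_{S^c}=0$, hence $v=0$); in the degenerate case $v_{S^c}=0$ one instead has $b=Av=0$ and uses the feasible point $w=0\ne x$ with $\|w\|_{h_{p,q}}=0<\|x\|_{h_{p,q}}$. Thus either the minimizer is not unique (the tie case, arising from equality $\|v_S\|_{h_{p,q}}=\|v_{S^c}\|_{h_{p,q}}$) or it is not $x$ at all (the strict case), so this $k$-sparse vector fails to be recovered, contradicting the hypothesis. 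The contrapositive gives the theorem, and it is exactly the tie case that accounts for the strict sign $<$ demanded in the conclusion.

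The main point requiring care is the precise reading of \emph{recovered}, which must mean unique minimality of the true signal; correctly tracking the strict-versus-nonstrict inequality at the boundary, and checking that the competitor $w$ is both feasible and genuinely distinct from $x$, is where the subtlety lies. The only structural facts used are the evenness of $h_{p,q}$ and the additivity of $\|\cdot\|_{h_{p,q}}$ across disjoint supports; note that the converse direction (the null-space inequality implying recovery) would instead invoke the preceding subadditivity lemma $h_{p,q}(x+y)\le h_{p,q}(x)+h_{p,q}(y)$, which is not needed for the necessity statement proved here.
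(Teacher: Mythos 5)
Your proposal is correct and follows essentially the same route as the paper's own argument for this direction: take a null-space vector violating the inequality, observe that $Av_S = A(-v_{S^c})$, and conclude that the $k$-sparse vector $v_S$ admits the feasible competitor $-v_{S^c}$ with objective value no larger, so it cannot be (uniquely) recovered. Your version is simply more careful than the paper's two-line sketch, in that you explicitly verify the competitor is distinct from $v_S$ and handle the tie and degenerate cases, which is exactly where the strict sign in the conclusion comes from.
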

\begin{proof}
Assume $x^*$ is a $k$-sparse solution of $Ax=b$ and $\hat{x}$ is another solution of $Ax=b$, it is obvious that $x-\hat{x}\in N(A)$. If the inequality (\ref{1218_1217}) is satisfied, then we have that 
\begin{eqnarray}
\notag \|x^*+x\|_{h_{p,q}}&=&\|(x^*+x)_{S}\|_{h_{p,q}}+\|x_{S^c}\|_{h_{p,q}} \\
\notag &\geq &\|x^*\|_{h_{p,q}}-\|x_{S}\|_{h_{p,q}}+\|x_{S^c}\|_{h_{p,q}} \\
&\geq & \|x^*\|_{h_{p,q}}
\end{eqnarray}

On other hand, if there exists a vector $x\in N(A)$ and a index set $S$ such that
\begin{eqnarray}\label{1218_1710}
\|x_{S}\|_{h_{p,q}}\geq\|x_{S^c}\|_{h_{p,q}}
\end{eqnarray} 
It is obvious that $Ax_{S}=A(-x_{S^c})$. However the $k$-sparse vector $x_{S}$ can not be recovered by $l_{h_{p,q}}$-minimization.

\end{proof}
By Theorem \ref{0123_0205}, a sufficient and necessary condition is presented and it is also important to show that what matrices satisfies such condition. By a new concept named $h_{p,q}$-Null Space Constant (h-NSC), we will show the matrices which satisfy such condition.
\begin{definition}
For any fixed $0<q\leq 1$ and $0<p$, we define the $h_{p,q}$-Null Space Constant (h-NSC) $h_{p,q}(A,k)$ as the min number which satisfies the following inequalities
\begin{eqnarray}
\|x_{S}\|_{h_{p,q}}\leq h_{p,q}(A,k)\|x_{S^c}\|_{h_{p,q}}
\end{eqnarray}
for any non-zero vector $x\in N(A)$ and any index set $S\in [n]$ with $|S|\leq k$.
\end{definition}
By the definition of $h_{p,q}(A,k)$, it is easy to get the following corollary.
\begin{corollary}
For any fixed $0<q\leq 1$ and $0<p$, if $h_{p,q}(A,k)<1$ then every $k$-sparse vector $x$ can be recovered by $l_{h_{p,q}}$-minimization. 
\end{corollary}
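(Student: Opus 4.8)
The plan is to read this corollary as an immediate consequence of chaining the definition of the $h_{p,q}$-Null Space Constant with the sufficiency half of Theorem \ref{0123_0205}. First I would fix $0<q\leq 1$ and $0<p$ and assume $h_{p,q}(A,k)<1$. By the very definition of $h_{p,q}(A,k)$, for every non-zero $x\in N(A)$ and every index set $S\subseteq[n]$ with $|S|\leq k$ we have
\[
\|x_{S}\|_{h_{p,q}}\leq h_{p,q}(A,k)\,\|x_{S^c}\|_{h_{p,q}}.
\]
The whole point is then to upgrade this non-strict inequality to the strict inequality (\ref{1218_1217}) required by Theorem \ref{0123_0205}.

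The upgrade hinges on the observation that $\|x_{S^c}\|_{h_{p,q}}>0$ for every admissible pair $(x,S)$ with $x\neq 0$. Indeed, were $\|x_{S^c}\|_{h_{p,q}}=0$, then $x_{S^c}=0$ and so $x$ would be a non-zero vector supported on $S$ with $|S|\leq k$, i.e.\ a $k$-sparse element of $N(A)$; but then the left side $\|x_{S}\|_{h_{p,q}}>0$ could not be dominated by $h_{p,q}(A,k)\cdot 0=0$ for any finite constant, contradicting the finiteness already guaranteed by $h_{p,q}(A,k)<1$. Hence $\|x_{S^c}\|_{h_{p,q}}>0$, and multiplying by the factor $h_{p,q}(A,k)<1$ yields
\[
\|x_{S}\|_{h_{p,q}}\leq h_{p,q}(A,k)\,\|x_{S^c}\|_{h_{p,q}}<\|x_{S^c}\|_{h_{p,q}},
\]
which is exactly condition (\ref{1218_1217}).

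Finally I would invoke Theorem \ref{0123_0205}. Although the statement there is phrased as a necessary condition for recovery, its proof in fact establishes the sufficient direction as well: the first displayed computation shows that once (\ref{1218_1217}) holds, any $k$-sparse $x^*$ satisfies $\|x^*+x\|_{h_{p,q}}\geq\|x^*\|_{h_{p,q}}$ for every $x\in N(A)$, so $x^*$ is a minimizer of the $l_{h_{p,q}}$-minimization and is therefore recovered. Applying this to each $k$-sparse vector completes the argument. I expect the only genuinely delicate point to be the strict-versus-non-strict gap handled in the second paragraph; everything else is a direct substitution into results already proved.
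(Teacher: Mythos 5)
Your proposal is correct and follows exactly the route the paper intends: the paper states this corollary without proof as an immediate consequence of the definition of $h_{p,q}(A,k)$ together with the sufficiency direction established in the proof of Theorem \ref{0123_0205}, which is precisely your chain. Your extra care in upgrading the non-strict defining inequality to the strict condition (\ref{1218_1217}) via positivity of $\|x_{S^c}\|_{h_{p,q}}$ fills in the one detail the paper leaves implicit.
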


Similar to the definition of $h_{p,q}(A,k)$, we can define $h_{\|\dot\|_1}(A,k)$ by changing $h_{p,q}(\cdot)$ into $\|\cdot\|_1$. Now, we will show some proposition of $h_{p,q}(A,k)$.
\begin{proposition}
For any fixed $0<q\leq 1$ and $0<p$, we have that 
\begin{eqnarray}
h_{p,q}(A,k)\leq h_{\|\dot\|_1}(A,k)
\end{eqnarray}
\end{proposition}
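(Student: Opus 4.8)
The plan is to express both null-space constants as suprema of ratios and then reduce the claim to a single per-vector comparison. By their defining property each constant is the smallest admissible multiplier, hence
\[
h_{p,q}(A,k)=\sup_{0\neq x\in N(A),\,|S|\le k}\frac{\|x_S\|_{h_{p,q}}}{\|x_{S^c}\|_{h_{p,q}}},
\qquad
h_{\|\cdot\|_1}(A,k)=\sup_{0\neq x\in N(A),\,|S|\le k}\frac{\|x_S\|_{1}}{\|x_{S^c}\|_{1}}.
\]
First I would fix a nonzero $x\in N(A)$ and note that, because both $t\mapsto h_{p,q}(t)$ and $t\mapsto t$ are increasing on $[0,\infty)$, for each of the two functionals the inner supremum over $|S|\le k$ is attained at the \emph{same} index set $S^*$, namely the positions of the $k$ largest magnitudes $|x_i|$; indeed, transferring a coordinate of larger magnitude from the complement into $S$ can only increase the ratio. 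It therefore suffices to verify, for this common set $S^*$,
\[
\frac{\|x_{S^*}\|_{h_{p,q}}}{\|x_{(S^*)^c}\|_{h_{p,q}}}\le\frac{\|x_{S^*}\|_{1}}{\|x_{(S^*)^c}\|_{1}}.
\]

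The engine of this per-vector inequality is the monotonicity of the density $r(t):=h_{p,q}(t)/t$. Since $h_{p,q}$ is concave on $[0,\infty)$ (a fact already exploited in the proof of Lemma \ref{0731_1707}) and $h_{p,q}(0)=0$, the quotient $r$ is nonincreasing on $(0,\infty)$. By the construction of $S^*$ every coordinate in $S^*$ has magnitude at least that of every coordinate in $(S^*)^c$, so $r(|x_i|)\le r(|x_j|)$ whenever $i\in S^*$ and $j\in(S^*)^c$. Writing $\|x_{S^*}\|_{h_{p,q}}=\sum_{i\in S^*}r(|x_i|)\,|x_i|$, the quantity $\|x_{S^*}\|_{h_{p,q}}/\|x_{S^*}\|_1$ is a weighted average of the numbers $r(|x_i|)$ over $i\in S^*$ with weights $|x_i|$, and analogously for $(S^*)^c$; since every value entering the first average is dominated by every value entering the second, I obtain
\[
\frac{\|x_{S^*}\|_{h_{p,q}}}{\|x_{S^*}\|_{1}}\le\frac{\|x_{(S^*)^c}\|_{h_{p,q}}}{\|x_{(S^*)^c}\|_{1}}.
\]
Cross-multiplying and regrouping the four positive quantities turns this into the displayed target inequality.

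To finish I would take the supremum over $x$: for every nonzero $x\in N(A)$ the optimal $h_{p,q}$-ratio (attained at $S^*$) is bounded by the optimal $\ell_1$-ratio (also attained at $S^*$), which is itself at most $h_{\|\cdot\|_1}(A,k)$, and passing to the supremum gives $h_{p,q}(A,k)\le h_{\|\cdot\|_1}(A,k)$. The only degenerate situation, a nonzero $x\in N(A)$ with $\|x\|_0\le k$, makes $\|x_{(S^*)^c}\|=0$ for both functionals so that both constants are $+\infty$ and the inequality is trivial. The step I expect to be the main obstacle is the reduction in the first paragraph: a naive coordinatewise comparison at an arbitrary $S$ simply fails, since a badly chosen $S$ can make the $h_{p,q}$-ratio exceed the $\ell_1$-ratio, so the argument genuinely depends on selecting the common top-$k$ set $S^*$ and on the monotonicity of $r(t)$ to drive the weighted-average estimate.
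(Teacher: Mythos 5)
Your proposal is correct and follows essentially the same route as the paper: both reduce the supremum over index sets to the top-$k$ magnitude set and then exploit the fact that $h_{p,q}(t)/t$ is nonincreasing on $(0,\infty)$ to compare the two ratios. Your weighted-average formulation of the final comparison is a cleaner rendering of the paper's chain of inequalities, but the underlying idea is identical.
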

\begin{proof}
For a given vector $\beta\in N(A)$, we denote $S_{\beta,k}$ as the index set of the $k$ largest elements,
\begin{eqnarray}\label{1219_1615}
\theta (\beta,p,q,k)=\frac{\|\beta_{S_{\beta,k}}\|_{h_{p,q}}}{\|\beta_{S_{\beta,k}^C}\|_{h_{p,q}}}
\end{eqnarray}
It is easy to get that
\begin{eqnarray}
h_{p,q}(A,k)=\max \limits_{\beta\in N(A)} \theta (\beta,p,q,k)
\end{eqnarray}

We notice that the function $\frac{log(1+x^q/p)}{x}$ is an decreasing function when $x>0$, so we can get that 
\begin{eqnarray}
\frac{h_{p,q}(\beta_j)}{|\beta_j|}\geq \frac{h_{p,q}(\beta_i)}{|\beta_i|}
\end{eqnarray}
where $i\in S_{\beta,k}$ and $j\in S_{\beta,k}^C$.

Rewrite the above inequality, we can get that 
\begin{eqnarray}
\frac{h_{p,q}(\beta_j)}{h_{p,q}(\beta_i)}\geq \frac{|\beta_j|}{|\beta_i|}
\end{eqnarray}
\begin{eqnarray}
\frac{\|\beta_{S_{\beta,k}}\|_{h_{p,q}}}{h_{p,q}(\beta_i)}\geq \frac{\|\beta_{S_{\beta,k}}\|_1}{|\beta_i|}
\end{eqnarray}
\begin{eqnarray}
\frac{\|\beta_{S_{\beta,k}}\|_{h_{p,q}}}{\|\beta_{S_{\beta,k}^C}\|_{h_{p,q}}}\leq \frac{\|\beta_{S_{\beta,k}}\|_1}{\|\beta_{S_{\beta,k}^C}\|_1}
\end{eqnarray}
Therefore, we have that 
\begin{eqnarray}
h_{p,q}(A,k)\leq h_{\|\dot\|_1}(A,k)
\end{eqnarray}
\end{proof}
\begin{corollary}
If every $k$-sparse vector can be recovered by $l_1$-minimization, then they also can be recovered by $l_{h_{p,q}}$-minimization for any $p>0$ and $0<q\leq 1$.
\end{corollary}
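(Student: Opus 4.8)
The plan is to chain together the null space characterization of $l_1$ recovery with the comparison between the two null space constants that has just been established. First I would observe that the argument in Theorem \ref{0123_0205} used only the subadditivity of the penalty, so the $l_1$ norm (which is subadditive) enjoys the same sufficient and necessary null space property: every $k$-sparse vector is recovered by $l_1$-minimization if and only if $\|x_{S}\|_1<\|x_{S^c}\|_1$ for every non-zero $x\in N(A)$ and every index set $S\subseteq[n]$ with $|S|\leq k$. Rephrasing this condition through the $l_1$ Null Space Constant $h_{\|\dot\|_1}(A,k)$ introduced above, the hypothesis ``every $k$-sparse vector can be recovered by $l_1$-minimization'' is equivalent to $h_{\|\dot\|_1}(A,k)<1$.

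Next I would invoke the preceding Proposition, which supplies the pointwise comparison $h_{p,q}(A,k)\leq h_{\|\dot\|_1}(A,k)$ valid for every $p>0$ and $0<q\leq 1$. Combining this with $h_{\|\dot\|_1}(A,k)<1$ immediately gives $h_{p,q}(A,k)<1$ for all admissible $p$ and $q$. Finally, the preceding Corollary states that $h_{p,q}(A,k)<1$ forces the strict inequality $\|x_{S}\|_{h_{p,q}}<\|x_{S^c}\|_{h_{p,q}}$ of Theorem \ref{0123_0205}, and hence every $k$-sparse vector is recovered by $l_{h_{p,q}}$-minimization. This closes the chain and proves the corollary.

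The only step requiring genuine care is the first one, namely turning the recovery hypothesis into the \emph{strict} bound $h_{\|\dot\|_1}(A,k)<1$, so I expect this to be the main obstacle. One must rule out the boundary case $h_{\|\dot\|_1}(A,k)=1$, in which the defining supremum is attained only in the limit rather than with strict slack. I would handle this exactly as in the second half of the proof of Theorem \ref{0123_0205}: if $h_{\|\dot\|_1}(A,k)\geq 1$, then there exist a non-zero $x\in N(A)$ and an index set $S$ with $|S|\leq k$ such that $\|x_{S}\|_1\geq\|x_{S^c}\|_1$; since $Ax_{S}=A(-x_{S^c})$, the $k$-sparse vector $x_{S}$ fails to be the unique $l_1$-minimizer of its own data, contradicting the recovery assumption. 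Everything beyond this point is a direct substitution of the two previously established results.
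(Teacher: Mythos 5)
Your overall route is the same as the paper's: reduce the claim to showing that $l_1$-recovery of every $k$-sparse vector forces $h_{\|\cdot\|_1}(A,k)<1$, then chain this with the Proposition $h_{p,q}(A,k)\leq h_{\|\cdot\|_1}(A,k)$ and the Corollary that $h_{p,q}(A,k)<1$ suffices for $l_{h_{p,q}}$-recovery. That part of the chain is fine. The gap sits exactly at the step you yourself flag as the delicate one, and your proposed fix does not close it. From $h_{\|\cdot\|_1}(A,k)\geq 1$ you infer the existence of a non-zero $x\in N(A)$ and a set $S$ with $|S|\leq k$ and $\|x_S\|_1\geq\|x_{S^c}\|_1$; but $h_{\|\cdot\|_1}(A,k)$ is defined as the least constant bounding all the ratios, i.e.\ a supremum, and a supremum equal to $1$ need not be attained by any single vector. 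The ``second half of the proof of Theorem \ref{0123_0205}'' that you invoke only shows that an \emph{actual} violating vector destroys recovery; it does not manufacture such a vector from the value of the constant. So, as written, your contrapositive argument is circular precisely in the boundary case $h_{\|\cdot\|_1}(A,k)=1$, which is the only case that needed an argument.

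The missing idea --- and the one the paper's proof supplies --- is a homogeneity-plus-compactness argument. The ratio $\theta(\beta)=\|\beta_{S_{\beta,k}}\|_1/\|\beta_{S_{\beta,k}^C}\|_1$ is invariant under rescaling $\beta\mapsto\lambda\beta$, so its supremum over $N(A)\setminus\{0\}$ equals its supremum over the compact set $N(A)\cap B_{\|\cdot\|_1}$ (the paper's ``complex set'' is evidently a typo for compact set). Under the recovery hypothesis $N(A)$ contains no non-zero $k$-sparse vector, so the denominator is bounded away from zero on that compact set, $\theta$ is continuous there, and the supremum is attained at some $\beta^*$; the pointwise null space property then gives $\theta(\beta^*)<1$, hence $h_{\|\cdot\|_1}(A,k)<1$. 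With that attainment step inserted, the remainder of your argument goes through exactly as in the paper.
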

\begin{proof}
In order to prove this corollary, we just need to prove that if every $k$-sparse vector can be recovered by $l_1$-minimization then $h_{\|\dot\|_1}(A,k)<1$.

Similar to (\ref{1219_1615}), we define the following function
\begin{eqnarray}
\theta (\beta,\|\dot\|_1,k)=\frac{\|\beta_{S_{\beta,k}}\|_{1}}{\|\beta_{S_{\beta,k}^C}\|_{1}}
\end{eqnarray}
It is obvious that $\theta (\beta,\|\dot\|_1,k)$ is a linear function, so we can get that
\begin{eqnarray}
h_{\|\dot\|_1}(A,k)=\max \limits_{\beta\in N(A)\cap B_{\|\dot\|_1}} \theta (\beta,p,q,k)
\end{eqnarray}
where $ B_{\|\dot\|_1}$ is the 1-norm unit ball. Because the set $ N(A)\cap B_{\|\dot\|_1}$ is a complex set, there exist $\beta ^* \in N(A)\cap B_{\|\dot\|_1}$ such that $h_{\|\dot\|_1}(A,k)=\theta (\beta^*,p,q,k)$.  
\end{proof}
\begin{theorem}
For any matrix $A\in \mathbb{R}^{m\times n}$ which satisfies $2k$-order RIP and $\delta_{2k}\leq \frac{\sqrt{2}-1}{2}$, then any $k$-sparse vector can be recovered by $l_1$-minimization
\end{theorem}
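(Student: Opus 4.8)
The plan is to show that the stated RIP condition forces the $l_1$ null space property, i.e. $h_{\|\dot\|_1}(A,k)<1$; recovery of every $k$-sparse vector then follows from the $l_1$ analogue of Theorem \ref{0123_0205} (obtained by replacing $h_{p,q}(\cdot)$ with $\|\cdot\|_1$ throughout), exactly as the constant $h_{\|\dot\|_1}(A,k)$ is introduced in the preceding corollary. Concretely, I would fix a nonzero $h\in N(A)$ and an index set $S$ with $|S|\le k$ and prove $\|h_S\|_1<\|h_{S^c}\|_1$. Since enlarging the left-hand side only makes the inequality harder, it suffices to take $S=T_0$ to be the set of the $k$ largest-magnitude coordinates of $h$, so that bounding $\|h_{T_0}\|_1/\|h_{T_0^c}\|_1$ controls $h_{\|\dot\|_1}(A,k)$.

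First I would perform the standard shelling decomposition: partition $T_0^c$ into consecutive blocks $T_1,T_2,\dots$, each of size $k$, where $T_1$ indexes the $k$ largest entries of $h_{T_0^c}$, $T_2$ the next $k$, and so on. Writing $T_{01}=T_0\cup T_1$ and using $Ah=0$, one gets $A h_{T_{01}}=-\sum_{j\ge 2}A h_{T_j}$. The RIP lower bound of order $2k$ then gives $(1-\delta_{2k})\|h_{T_{01}}\|_2^2\le \|A h_{T_{01}}\|_2^2=\langle A h_{T_{01}},\,-\sum_{j\ge 2}A h_{T_j}\rangle$. Next I would expand the right-hand side and bound each cross term using the almost-orthogonality consequence of RIP: for $u,v$ with disjoint supports of size at most $k$, $|\langle Au,Av\rangle|\le \delta_{2k}\|u\|_2\|v\|_2$. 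This yields $\|A h_{T_{01}}\|_2^2\le \delta_{2k}\big(\|h_{T_0}\|_2+\|h_{T_1}\|_2\big)\sum_{j\ge 2}\|h_{T_j}\|_2\le \sqrt{2}\,\delta_{2k}\,\|h_{T_{01}}\|_2\sum_{j\ge 2}\|h_{T_j}\|_2$, the last step by Cauchy--Schwarz.

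Combining this with the block estimate $\sum_{j\ge 2}\|h_{T_j}\|_2\le k^{-1/2}\|h_{T_0^c}\|_1$ (each entry in $T_{j+1}$ is dominated by the average magnitude on $T_j$) and dividing by $\|h_{T_{01}}\|_2$ gives $\|h_{T_0}\|_2\le \|h_{T_{01}}\|_2\le \tfrac{\sqrt{2}\,\delta_{2k}}{1-\delta_{2k}}\,k^{-1/2}\|h_{T_0^c}\|_1$. Finally $\|h_{T_0}\|_1\le k^{1/2}\|h_{T_0}\|_2$ yields $\|h_{T_0}\|_1\le \tfrac{\sqrt{2}\,\delta_{2k}}{1-\delta_{2k}}\|h_{T_0^c}\|_1$, so $h_{\|\dot\|_1}(A,k)\le \tfrac{\sqrt{2}\,\delta_{2k}}{1-\delta_{2k}}$. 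The hypothesis $\delta_{2k}\le \tfrac{\sqrt{2}-1}{2}<\sqrt{2}-1$ makes this factor strictly less than $1$, which is precisely the null space property, and recovery follows.

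I expect the main obstacle to be the almost-orthogonality lemma $|\langle Au,Av\rangle|\le \delta_{2k}\|u\|_2\|v\|_2$, since the RIP is stated here only as a two-sided norm bound; deriving the inner-product estimate requires applying RIP of order $2k$ to the normalized vectors $u/\|u\|_2\pm v/\|v\|_2$ and subtracting, a polarization argument that must be arranged so the combined support has size at most $2k$. The remaining work — the block inequality and the Cauchy--Schwarz steps — is routine but must track the factor $\sqrt{2}$ accurately, since that constant is exactly what the threshold $\tfrac{\sqrt{2}-1}{2}$ is calibrated against.
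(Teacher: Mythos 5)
The paper states this theorem without any proof at all --- it is quoted as a known result (essentially Cand\`es' $\delta_{2k}<\sqrt{2}-1$ theorem, \cite{Cand2008The}, here with the more conservative threshold $\frac{\sqrt{2}-1}{2}$), so there is no in-paper argument to compare yours against. Your proposal is the standard and correct route: shelling $T_0^c$ into blocks of size $k$, the polarization/almost-orthogonality estimate $|\langle Au,Av\rangle|\le\delta_{2k}\|u\|_2\|v\|_2$ for disjointly supported $u,v$ with combined support at most $2k$, the block bound $\sum_{j\ge 2}\|h_{T_j}\|_2\le k^{-1/2}\|h_{T_0^c}\|_1$, and the Cauchy--Schwarz step giving the factor $\sqrt{2}$. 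The resulting constant $\frac{\sqrt{2}\,\delta_{2k}}{1-\delta_{2k}}$ is below $1$ exactly when $\delta_{2k}<\sqrt{2}-1$, so the hypothesis $\delta_{2k}\le\frac{\sqrt{2}-1}{2}$ is satisfied with room to spare, and your reduction to the worst-case index set $S=T_0$ (enlarging $\|h_S\|_1$ while shrinking $\|h_{S^c}\|_1$) is valid. Two small points worth making explicit if you write this up: strictness of the null space inequality for nonzero $h$ requires observing that $\|h_{T_0^c}\|_1=0$ would force $h=0$ via the RIP lower bound; and the implication from the strict null space property to exact $k$-sparse recovery should be stated as the $\|\cdot\|_1$ analogue of Theorem 2, since the paper only proves that equivalence for $\|\cdot\|_{h_{p,q}}$.
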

\begin{theorem}
Let $A\in \mathbb{R}^{m\times n}$ be a random draw of a Gaussian matrix. Let $k\leq n$, $\rho \in (0,1]$, $\epsilon in (0,1)$ such that
\begin{eqnarray}
D(x)=\frac{1}{\sqrt{2\log(ex^{-1})}}+\frac{1}{8\pi^3\log (ex^{-1})}
\end{eqnarray}
\begin{eqnarray}
\frac{m^2}{m+1}\geq 2k\log(ex^{-1})\left(1+\rho^{-1}+D(k/n)+\sqrt{\frac{\log(\epsilon ^{-1})}{k\log(en/k)}}\right)
\end{eqnarray}
Then with probability at least $1-\epsilon$ the matrix A satisfies the stable null space
property of order s with $h_{\|\cdots\|_1}=\rho$ 
\end{theorem}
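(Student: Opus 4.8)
The plan is to realise the null space $N(A)$ of the Gaussian matrix $A$ as a uniformly distributed $(n-m)$-dimensional subspace of $\mathbb{R}^n$ and to apply Gordon's \emph{escape through the mesh} theorem. First I would unwind what must be proved: on $N(A)$ the stable null space property of order $k$ with constant $\rho$ reduces (since $Av=0$ annihilates the $\|Av\|_2$ term) to
\begin{eqnarray}
\|v_S\|_1\leq \rho\,\|v_{S^c}\|_1\qquad \text{for all }v\in N(A)\text{ and all }|S|\leq k.
\end{eqnarray}
Taking $S$ to index the $k$ largest entries of $v$, the property fails precisely when $N(A)$ meets the cone
\begin{eqnarray}
T_{\rho,k}=\left\{v\in\mathbb{R}^n:\ \|v_S\|_1>\rho\,\|v_{S^c}\|_1\ \text{for some }|S|\leq k\right\}.
\end{eqnarray}
Since $T_{\rho,k}$ is scale invariant, it suffices to show that with probability at least $1-\epsilon$ one has $\min_{v\in M}\|Av\|_2>0$, where $M:=T_{\rho,k}\cap S^{n-1}$; then $N(A)\cap M=\emptyset$ and the property holds.

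Second I would apply Gordon's lemma, which for an i.i.d.\ standard Gaussian $A$ yields
\begin{eqnarray}
\mathbb{E}\,\min_{v\in M}\|Av\|_2\ \geq\ \lambda_m-w(M),
\end{eqnarray}
where $\lambda_m=\mathbb{E}\|g\|_2$ for a standard Gaussian $g\in\mathbb{R}^m$ and $w(M)=\mathbb{E}\sup_{v\in M}\langle g',v\rangle$ is the Gaussian width of $M$ for a standard Gaussian $g'\in\mathbb{R}^n$. The crucial arithmetic input is the sharp lower bound $\lambda_m\geq m/\sqrt{m+1}$, that is $\lambda_m^2\geq m^2/(m+1)$, which is exactly the quantity standing on the left of the hypothesis. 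Thus the whole argument reduces to controlling $w(M)$ by the square root of the right-hand side.

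The crux, and the step I expect to be the main obstacle, is the estimate of $w(M)$. Because $T_{\rho,k}$ is a union over the $\binom{n}{k}$ index sets $S$ of the regions $\|v_S\|_1>\rho\|v_{S^c}\|_1$, I would split $\langle g',v\rangle$ into the contribution of the $k$-sparse head $v_S$ and the $\ell_1$-controlled tail $v_{S^c}$, bound each supremum separately, and take a covering/union bound over the supports. The head contributes the dominant term $\sqrt{2k\log(en/k)}$ (note $\log(ex^{-1})=\log(en/k)$ for $x=k/n$), the ratio constraint $\|v_{S^c}\|_1\geq\rho^{-1}\|v_S\|_1$ produces the factor $\rho^{-1}$, and a sharpened bound on the per-coordinate Gaussian contribution together with the binomial covering number produces the correction term $D(k/n)$; altogether $w(M)^2\leq 2k\log(en/k)\big(1+\rho^{-1}+D(k/n)\big)$. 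Matching $D$ with the given closed form $\tfrac{1}{\sqrt{2\log(ex^{-1})}}+\tfrac{1}{8\pi^3\log(ex^{-1})}$ is the delicate book-keeping.

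Finally I would pass from the expectation bound to a high-probability bound by Gaussian concentration: the map $A\mapsto\min_{v\in M}\|Av\|_2$ is $1$-Lipschitz in the Frobenius norm (being a minimum of the $1$-Lipschitz maps $A\mapsto\|Av\|_2$), so $\Pr\big(\min_{v\in M}\|Av\|_2\leq \mathbb{E}\min_{v\in M}\|Av\|_2-t\big)\leq e^{-t^2/2}$. Choosing $t=\sqrt{2\log(\epsilon^{-1})}$ makes this probability at most $\epsilon$, and writing $t=\sqrt{2k\log(en/k)}\cdot\sqrt{\log(\epsilon^{-1})/(k\log(en/k))}$ folds the deviation into the last summand of the hypothesis. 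Combining the three bounds, the assumption $\lambda_m^2\geq 2k\log(en/k)\big(1+\rho^{-1}+D(k/n)+\sqrt{\log(\epsilon^{-1})/(k\log(en/k))}\big)$ forces $\lambda_m-w(M)-t>0$, whence $\min_{v\in M}\|Av\|_2>0$ with probability at least $1-\epsilon$, establishing the stable null space property of order $k$ with constant $\rho$.
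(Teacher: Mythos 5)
The paper offers no proof of this theorem at all: it is quoted (with typographical slips, including a missing square on the right-hand side and ``order $s$'' where the hypothesis is phrased in terms of $k$) from Foucart and Rauhut's book, so there is no argument of the authors' own to compare yours against. Your proposal is precisely the standard proof of that imported result: reduce the stable null space property to the statement that $\ker A$ misses the cone $T_{\rho,k}$, apply Gordon's escape through the mesh together with $\lambda_m\geq m/\sqrt{m+1}$, bound the Gaussian width of $T_{\rho,k}\cap S^{n-1}$ by splitting into the $k$-sparse head and the $\ell_1$-controlled tail with a union bound over supports, and finish with Gaussian concentration of the $1$-Lipschitz functional $A\mapsto\min_{v\in M}\|Av\|_2$. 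Every structural step is the right one.

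One piece of book-keeping does not close as written, and it traces back to the paper's mis-stated hypothesis. To conclude $\lambda_m-w(M)-t>0$ you need $\lambda_m\geq\sqrt{2k\log(en/k)}\,\bigl(1+\rho^{-1}+D(k/n)\bigr)+\sqrt{2\log(\epsilon^{-1})}$, which is equivalent to $\frac{m^2}{m+1}\geq 2k\log(en/k)\bigl(1+\rho^{-1}+D(k/n)+\sqrt{\log(\epsilon^{-1})/(k\log(en/k))}\bigr)^2$ --- the bracket must be squared, as it is in the original source. Your width estimate is stated as $w(M)^2\leq 2k\log(en/k)\bigl(1+\rho^{-1}+D(k/n)\bigr)$, which copies the unsquared form; from it you could only get $\lambda_m\geq\sqrt{a(b+c)}$ with $a=2k\log(en/k)$, and since $\sqrt{a(b+c)}\leq\sqrt{ab}+\sqrt{ac}$ goes the wrong way, the sum $w(M)+t$ is not dominated. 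Restate the width bound as $w(M)\leq\sqrt{2k\log(en/k)}\,\bigl(1+\rho^{-1}+D(k/n)\bigr)$ and square the bracket in the hypothesis, and the argument is complete, up to the admittedly delicate verification of the exact constant $D$, which you rightly identify as the technical core.
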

\begin{theorem}
If $h_{p,q}(A,k)\leq 1$, for any $x^*\in \mathbb{R}^n$ we have that 
\begin{eqnarray}
\|x^*-\vartriangle _{h_{p,q}}(Ax^*)\|_{h_{p,q}}\leq \frac{2(1+h_{p,q}(A,k))}{1-h_{p,q}(A,k)}\|\sigma(x)\|_{h_{p,q}}
\end{eqnarray}
where $\vartriangle _{h_{p,q}}(Ax^*)$ is the solution of $l_{h_{p,q}}$-minimization with $b=Ax^*$.
\end{theorem}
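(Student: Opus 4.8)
The plan is to transfer the classical null-space-property stability argument (the one familiar from $\ell_1$-minimization) to the separable functional $\|\cdot\|_{h_{p,q}}$, using the subadditivity lemma proved just above in place of the triangle inequality. Write $\rho = h_{p,q}(A,k)$ and let $\hat x = \vartriangle_{h_{p,q}}(Ax^*)$ be the minimizer, so that $A\hat x = Ax^*$ and hence $h := \hat x - x^* \in N(A)$. Let $S$ be the index set of the $k$ largest-magnitude entries of $x^*$; with this choice $|S|\le k$ and $\|\sigma(x^*)\|_{h_{p,q}} = \|x^*_{S^c}\|_{h_{p,q}}$ is exactly the best $k$-term approximation error measured in the $h_{p,q}$ functional.

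First I would record two elementary facts. Since $\|v\|_{h_{p,q}} = \sum_i h_{p,q}(v_i)$ is separable, it splits exactly over the partition $S, S^c$, i.e. $\|v\|_{h_{p,q}} = \|v_S\|_{h_{p,q}} + \|v_{S^c}\|_{h_{p,q}}$. Since $h_{p,q}$ is subadditive (the preceding lemma) and depends on $v_i$ only through $|v_i|$, the functional obeys both a forward and a reverse triangle inequality, $\|a+b\|_{h_{p,q}} \le \|a\|_{h_{p,q}} + \|b\|_{h_{p,q}}$ and $\|a+b\|_{h_{p,q}} \ge \|a\|_{h_{p,q}} - \|b\|_{h_{p,q}}$. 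Finally, the definition of the $h$-NSC gives $\|h_S\|_{h_{p,q}} \le \rho\,\|h_{S^c}\|_{h_{p,q}}$ because $h \in N(A)$ and $|S| \le k$.

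Next I would run the cone-type chain. By optimality of $\hat x$ and feasibility of $x^*$, $\|x^*\|_{h_{p,q}} \ge \|\hat x\|_{h_{p,q}} = \|x^* + h\|_{h_{p,q}}$. Splitting over $S$ and $S^c$ and applying the reverse triangle inequality on each block gives
\[
\|x^*\|_{h_{p,q}} \ge \|x^*_S\|_{h_{p,q}} - \|h_S\|_{h_{p,q}} + \|h_{S^c}\|_{h_{p,q}} - \|x^*_{S^c}\|_{h_{p,q}}.
\]
Using $\|x^*\|_{h_{p,q}} = \|x^*_S\|_{h_{p,q}} + \|x^*_{S^c}\|_{h_{p,q}}$ and rearranging yields $\|h_{S^c}\|_{h_{p,q}} \le \|h_S\|_{h_{p,q}} + 2\|x^*_{S^c}\|_{h_{p,q}}$. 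Feeding in the $h$-NSC bound $\|h_S\|_{h_{p,q}} \le \rho\,\|h_{S^c}\|_{h_{p,q}}$ gives $(1-\rho)\|h_{S^c}\|_{h_{p,q}} \le 2\|x^*_{S^c}\|_{h_{p,q}}$, and then $\|x^* - \hat x\|_{h_{p,q}} = \|h\|_{h_{p,q}} = \|h_S\|_{h_{p,q}} + \|h_{S^c}\|_{h_{p,q}} \le (1+\rho)\|h_{S^c}\|_{h_{p,q}}$, which chains to the claimed $\tfrac{2(1+\rho)}{1-\rho}\|x^*_{S^c}\|_{h_{p,q}}$.

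The argument is routine once the two triangle inequalities are in hand, so the main obstacle is not analytic depth but the degenerate boundary $\rho = h_{p,q}(A,k) = 1$ permitted by the hypothesis: there the constant $\tfrac{2(1+\rho)}{1-\rho}$ is undefined, as the final division by $1-\rho$ fails. The genuine quantitative content therefore needs the strict inequality $h_{p,q}(A,k) < 1$ (consistent with the earlier corollary); under equality the conclusion degenerates and should be read as vacuous. I would accordingly state and prove the theorem under $h_{p,q}(A,k) < 1$, and also make explicit that $\|\sigma(x)\|_{h_{p,q}}$ denotes the best $k$-term approximation error $\min_{|S|\le k}\|x_{S^c}\|_{h_{p,q}}$, which is what the choice of $S$ above realizes.
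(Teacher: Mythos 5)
Your proof is correct and follows essentially the same route as the paper's: optimality of the minimizer, splitting the separable functional over $S$ and $S^c$, the reverse triangle inequality obtained from the subadditivity lemma, and the $h$-NSC bound on the null-space vector, yielding first $(1-\rho)\|v_{S^c}\|_{h_{p,q}}\le 2\|x^*_{S^c}\|_{h_{p,q}}$ and then the stated constant $\tfrac{2(1+\rho)}{1-\rho}$. Your remark that the hypothesis must be the strict inequality $h_{p,q}(A,k)<1$ rather than $\le 1$ correctly identifies a flaw in the paper's statement, since the paper's own proof divides by $1-h_{p,q}(A,k)$.
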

\begin{proof}
Let $y=\vartriangle _{h_{p,q}}(Ax^*)$, it is obvious that $v=x-y\in Ker(A)$ and
\begin{eqnarray}
\|v_S\|_{h_{p,q}}\leq h_{p,q}(A,k)\|v_{S^C}\|_{h_{p,q}}
\end{eqnarray}
for any $S\in [n]$ with $|S|\leq k$.
\begin{eqnarray}
\notag \|x\|_{h_{p,q}}&=&\|x^S\|_{h_{p,q}}+\|x_{S^C}\|_{h_{p,q}} \\
&\leq & \|x_{S^C}\|_{h_{p,q}}+\|(x-y)_{S}\|_{h_{p,q}}+\|y_{S}\|_{h_{p,q}}
\end{eqnarray}
\begin{eqnarray}
\notag \|v_{S^C}\|_{h_{p,q}} & \leq & \|x_{S^C}\|_{h_{p,q}}+\|y_{S^C}\|_{h_{p,q}} \\
\notag &=& \|y\|_{h_{p,q}}-\|y_{S}\|_{h_{p,q}}+\|x_{S^C}\|_{h_{p,q}}  \\
\notag &\leq & \|y\|_{h_{p,q}}+2\|x_{S^C}\|_{h_{p,q}}-\|x\|_{h_{p,q}}+\|v_{S}\|_{h_{p,q}} \\
&\leq & \|y\|_{h_{p,q}}-\|x\|_{h_{p,q}}+\|x_{S^C}\|_{h_{p,q}}+h_{p,q}(A,k)\|v_{S^C}\|_{h_{p,q}}
\end{eqnarray}
Therefore,
\begin{eqnarray}
\|v_{S^C}\|_{h_{p,q}} \leq \frac{1}{1-h_{p,q}(A,k)}\left(\|y\|_{h_{p,q}}-\|x\|_{h_{p,q}}+2\|x_{S^C}\|_{h_{p,q}}\right)
\end{eqnarray}
Therefore,
\begin{eqnarray}
\|v_{S^C}\|_{h_{p,q}}\leq \frac{2}{1-h_{p,q}(A,k)}\|x_{S^C}\|_{h_{p,q}}
\end{eqnarray}
\begin{eqnarray}
\notag \|v\|_{h_{p,q}}&=&\|v_S\|_{h_{p,q}}+\|v_{S^C}\|_{h_{p,q}} \\
\notag &\leq & (1+h_{p,q}(A,k))\|v_{S^C}\|_{h_{p,q}} \\
&\leq & \frac{2(1+h_{p,q}(A,k))}{1-h_{p,q}(A,k)}\|x_{S^C}\|_{h_{p,q}}
\end{eqnarray}
Take $S$ as the index of the largest $k$ elements of $x$, so we can get that 
\begin{eqnarray}
\|x^*-\vartriangle _{h_{p,q}}(Ax^*)\|_{h_{p,q}}\leq \frac{2(1+h_{p,q}(A,k))}{1-h_{p,q}(A,k)}\|\sigma(x)\|_{h_{p,q}}
\end{eqnarray}
\end{proof}
\section{The local optimal property and an unify algorithm for $l_{h_{p,q}}$-minimization}
In this section, we will focus on the application of $l_{h_{p,q}}$-minimization. Although $h_{p,q}(\cdot)$ is not a smooth function, we will show an analysis expression of its local optimal solution. By such analysis expression, a fixed point iterative algorithm is presented and we also prove the convergency of this algorithm.
\subsection{The local optimal property of $l_{h_{p,q}}$-minimization}
For a given $x\in \mathbb{R}^n$, we define two diagonal matrices $H(x),F(x)\in \mathbb{R}^{n\times n}$,
\begin{eqnarray}
H(x)_{i,i}=
\begin{cases}
\frac{q}{|x_i|^{2-q}(p+|x_i|^q} &  |x_i|\neq 0 \\
0 & |x_i|= 0
\end{cases}
\end{eqnarray}
and 
\begin{eqnarray}
F(x)_{i,i}=\frac{|x_i|^{2-q}(p+|x_i|^q}{q}
\end{eqnarray}
With $H(x)$ and $F(x)$, the following theorem will show the local property of $l_{h_{p,q}}$-minimization.
\begin{theorem}
If the underdetermined matrix $A\in \mathbb{R}^{m\times n}$ satisfies that $rank(A)=m$, $x^*$ is the solution of $l_{h_p}$-minimization (\ref{hp-model}), then $x^*$ satisfies the following equation,
\begin{eqnarray}\label{0131_1731}
x^*=F(x^*)A^T(AF(x^*)A^T)^{-1} b
\end{eqnarray}
\end{theorem}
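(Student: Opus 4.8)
The plan is to treat (\ref{hp-model}) as a smooth equality‑constrained problem on the support of the optimizer and to read off a Lagrange multiplier (stationarity) condition. First I would record the differential identity that motivates the definitions of $H$ and $F$: for $x_i\neq 0$ one computes $\frac{d}{dx_i}h_{p,q}(x_i)=\frac{q|x_i|^{q-1}\,\mathrm{sign}(x_i)}{p+|x_i|^q}=\frac{q\,x_i}{|x_i|^{2-q}(p+|x_i|^q)}=H(x)_{i,i}\,x_i$, so that the objective gradient is $\nabla\|x\|_{h_{p,q}}=H(x)\,x$ on the support, while $F(x)_{i,i}=H(x)_{i,i}^{-1}$ there and $F(x)_{i,i}=0$ off the support (using $2-q>0$). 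This is the only computation that requires care, and it is routine.

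Next I would set $S=support(x^*)$. By Lemma \ref{0731_1707} the columns of $A_S$ are linearly independent, hence $A_S^{T}$ is surjective; this guarantees the existence of a multiplier $\lambda\in\mathbb{R}^m$ with $(A^{T}\lambda)_i=H(x^*)_{i,i}\,x^*_i$ for every $i\in S$. (It is existence, not uniqueness, that Lemma \ref{0731_1707} supplies, which is exactly what is needed since the support‑restricted constraint $A_S x_S=b$ already pins down $x^*_S$ and leaves no feasible interior perturbation.) Multiplying this coordinatewise identity by $F(x^*)_{i,i}$ gives $x^*_i=F(x^*)_{i,i}(A^{T}\lambda)_i$ for $i\in S$, and for $i\notin S$ both sides vanish because $x^*_i=0=F(x^*)_{i,i}$. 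In vector form this is precisely $x^*=F(x^*)A^{T}\lambda$.

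I would then impose feasibility: applying $A$ and using $Ax^*=b$ yields $b=Ax^*=AF(x^*)A^{T}\lambda$. If $AF(x^*)A^{T}$ is invertible, I solve $\lambda=(AF(x^*)A^{T})^{-1}b$ and substitute back to obtain the claimed identity $x^*=F(x^*)A^{T}(AF(x^*)A^{T})^{-1}b$.

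The main obstacle is precisely this invertibility. Since $F(x^*)$ is a nonnegative diagonal matrix vanishing off $S$, one has $AF(x^*)A^{T}=A_S\,F(x^*)_{S,S}\,A_S^{T}$, an $m\times m$ matrix of rank at most $|S|$; because $F(x^*)_{S,S}$ is positive definite, its invertibility is equivalent to $A_S$ having full row rank. Combined with the full column rank from Lemma \ref{0731_1707}, this would force $A_S$ to be square and invertible, i.e. $|S|=m$. I would therefore either invoke the standing hypothesis $rank(A)=m$ together with a full‑support (or $|S|=m$) assumption on $x^*$, or else interpret the identity through the reweighted‑least‑squares/limiting form used by the algorithm of the next subsection, where the diagonal weights stay strictly positive and $AF(x)A^{T}$ is manifestly nonsingular. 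The nonsmoothness of $h_{p,q}$ at the origin is only a minor issue here, since the off‑support coordinates enter the final identity trivially and the stationarity condition is read off on $S$, where the objective is smooth.
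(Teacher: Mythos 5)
Your proposal follows essentially the same route as the paper: the paper also restricts to $S=support(x^*)$ (via a permutation matrix $E(x^*)$ and the reduced matrix $B=(AE(x^*))_{[s]}$), applies the Lagrange/KKT stationarity condition where $h_{p,q}$ is differentiable to get $H(z)z=B^T\lambda$, solves for $\lambda=(BF(z)B^T)^{-1}b$, and lifts the resulting identity back to the full space. The derivative computation and the observation that $\nabla\|x\|_{h_{p,q}}=H(x)x$ on the support are exactly the paper's steps. The one place you go beyond the paper is in flagging the invertibility of $AF(x^*)A^T$: you are right that this matrix equals $A_SF(x^*)_{S,S}A_S^T$ and has rank $|S|$, so the stated inverse only exists when $|S|=m$ — precisely the non-sparse regime. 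The paper's proof silently inverts $BF(z)B^T$ ($m\times m$ of rank $s$) and so has the same gap; the fact that Algorithm 1 uses the pseudoinverse $(ADA^T)^{\dagger}$ rather than the inverse suggests the intended reading is the one you propose, namely replacing $(AF(x^*)A^T)^{-1}$ by the Moore--Penrose pseudoinverse (for which the identity does go through, since $b=A_Sx^*_S$ lies in the range of $A_SF(x^*)_{S,S}A_S^T$). So your argument is sound and, on this point, more careful than the paper's own.
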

\begin{proof}
For the solution $x^*\in \mathbb{R}^n$, we assume that $\|x^*\|_0=s$, then it is obvious that there exists an elementary orthogonal matrix $E(x^*)\in \mathbb{R}^{n\times n}$ such that 
\begin{eqnarray}
S=support(\widetilde{x})=[s]
\end{eqnarray}
where $\widetilde{x}=E(x^*)x^*$ and $E(x^*)=E(x^*)^{-1}$.

Let $z=\widetilde{x}_{[s]}$ and consider the following problem,
\begin{eqnarray}\label{0620_2107}
\notag & \min\limits_{t\in \mathbb{R}^s} \|t\|_{h_{p,q}}\\
&s.t. Bt=b
\end{eqnarray}
where $B=(AE(x^*))_{[s]}\in \mathbb{R}^{m\times s}$. It is obvious that $z$ is the solution of model (\ref{0620_2107}). We notice that $support(z)=[s]$ so there exists a constant $\eta$ small enough such that the function $\|\dot\|_{h_{p,q}}$ is differentiable at the point $x=z$ when $\|t-z\|_2\leq \eta $. Therefore, KKT condition can be applied in such area. Define the lagrange function $L(t,\lambda)$ as 
\begin{eqnarray}
L(t,\lambda)=\|t\|_{h_{p,q}}-\lambda ^T(Bt-b)
\end{eqnarray}
Therefore, $z$ must be the solution of the following equations,
\begin{eqnarray}\label{0620_2306}
\begin{cases}
\frac{\partial L}{\partial x}|_{x=z}=0 \\
Bz=b
\end{cases}
\end{eqnarray}
It is obvious that 
\begin{eqnarray}
h_p'(x_i)=\frac{sign(x_i) q \|x_i\|^{q-1}}{(p+|x_i^q|)}=\frac{q x_i}{|x_i|^{2-q}(p+|x_i|^q)}
\end{eqnarray}
Therefore, we can rewrite the equations (\ref{0620_2306}) as
\begin{eqnarray}\label{0620_2307}
\begin{cases}
H(z)z-B^T\lambda ^*=0 \\
Bz=b
\end{cases}
\end{eqnarray}
Since $support(z)=[s]$, $H(z)\in \mathbb{R}^{s\times s}$ is an invertible matrix and $H(z)^{-1}=F(z)$. By the equations (\ref{0620_2307}), we can get that 
\begin{eqnarray}
BH(z)^{-1}H(z)z=AH(z)^{-1}A^T\lambda ^*
\end{eqnarray}
therefore, 
\begin{eqnarray}
\lambda ^*= \left( BF(z)B^T \right)^{-1}b 
\end{eqnarray}
So, we can conclude that 
\begin{eqnarray}
z=F(z)B^T(BF(z)B^T)^{-1}b.
\end{eqnarray}
By the definition, it is obvious that 
\begin{equation}       
F(\widetilde{x})=\left(                 
  \begin{array}{cc}   
    F(z) & \boldsymbol 0_{s\times (n-s)} \\  
    \boldsymbol 0_{(m-s)\times (n-s)} & \boldsymbol 0_{s\times s}
  \end{array}
\right)           
\end{equation}
 and it is easy to get that
\begin{eqnarray}
&&AE(x^*)F(\widetilde{x})(AE(x^*))^T \\
&=&\left( B \ \ (AE(x^*))_{[n]\setminus [s]}\right) 
\left(                 
  \begin{array}{cc}   
    F(z) & \boldsymbol 0_{s\times (n-s)} \\  
    \boldsymbol 0_{(m-s)\times (n-s)} & \boldsymbol 0_{s\times s}
  \end{array}
\right)           
\left(
\begin{array}{c}
B^T \\
(AE(x^*))_{[n]\setminus [s]}^T
\end{array}
\right) \\
&=& BF(z)B^T 
\end{eqnarray}
And
\begin{eqnarray}
\notag F(\widetilde{x})(AE(x^*))^T&=&\left(                 
  \begin{array}{cc}   
    F(z) & \boldsymbol 0_{s\times (n-s)} \\  
    \boldsymbol 0_{(m-s)\times (n-s)} & \boldsymbol 0_{s\times s}
  \end{array}
\right)           
\left(
\begin{array}{c}
B^T \\
(AE(x^*))^T
\end{array}
\right) \\
&=& \left(
\begin{array}{c}
F(z)B^T \\
\boldsymbol 0
\end{array}
\right)
\end{eqnarray}
Therefore, we can conclude that
\begin{eqnarray}
\widetilde{x}=F(\widetilde{x})(AE(x^*))^T(AE(x^*)F(\widetilde{x})(AE(x^*))^T)^{-1}b
\end{eqnarray}
Since $\widetilde{x}=E(x^*)x^*$, it is easy to get that 
\begin{eqnarray}
F(\widetilde{x})=E(x^*)F(x^*)E(x^*)
\end{eqnarray}
It is easy to get that 
\begin{eqnarray}
x^*=F(x^*)A^T(AF(x^*)A^T)^{-1}b
\end{eqnarray}
\end{proof}
\begin{algorithm}[H]
\caption{An unify algorithm for $l_{h_p}$-minimization}
\label{alg1}
\begin{algorithmic}
\REQUIRE $A\in \mathbb{R}^{m \times n}, \quad  b \in \mathbb{R}^n, \quad f_p(x), \quad p .$
\ENSURE $ x^*.$
\STATE $ x^1 = arg \min \limits_{Ax=b} \| x\|_1$
\STATE $D=D(\boldsymbol x^1)$
\FOR{$k=1,2,\cdots$ until convergence}
\STATE $ x^{k+1}=DA^T(ADA^T)^{\dagger} b$
\STATE $D=D( x^{k+1})$
\STATE $k=k+1$
\ENDFOR
\STATE $ x^*= x^{k+1}$
\end{algorithmic}
\end{algorithm}
By the analysis expression \ref{0131_1731}, a fixed point iterative algorithm is presented in Algorithm \ref{alg1}. Before we give the convergency result of the algorithm \ref{alg1}, some lemmas are necessary.
\begin{lemma}\label{1101_2336}
For a fixed $\alpha>0$, we have that
\begin{eqnarray}
h_{p,q}(x)-\frac{h_{p,q}'(\alpha)}{2\alpha}x^2\leq h_{p,q}(\alpha)-\frac{\alpha h_{p,q}'(\alpha)}{2}
\end{eqnarray} 
for any $x\geq 0$
\end{lemma}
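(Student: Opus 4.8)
The plan is to read this as a tangent--quadratic (majorization) inequality and reduce it to a one--variable monotonicity fact. First I would fix $\alpha>0$ and define, for $x\ge 0$,
\begin{eqnarray}
\notag \phi(x)=h_{p,q}(x)-\frac{h_{p,q}'(\alpha)}{2\alpha}\,x^2 .
\end{eqnarray}
Evaluating at $x=\alpha$ gives $\phi(\alpha)=h_{p,q}(\alpha)-\frac{\alpha h_{p,q}'(\alpha)}{2}$, which is precisely the right-hand side of the claimed inequality. Hence the whole lemma is equivalent to the assertion that $\phi$ attains its maximum over $[0,\infty)$ at the point $x=\alpha$, i.e. $\phi(x)\le\phi(\alpha)$ for all $x\ge 0$.

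Next I would differentiate. Using $h_{p,q}'(x)=\frac{q x^{q-1}}{p+x^q}$ for $x>0$, one computes
\begin{eqnarray}
\notag \phi'(x)=h_{p,q}'(x)-\frac{h_{p,q}'(\alpha)}{\alpha}\,x=x\left(\frac{h_{p,q}'(x)}{x}-\frac{h_{p,q}'(\alpha)}{\alpha}\right).
\end{eqnarray}
Writing $g(x)=h_{p,q}'(x)/x=\frac{q x^{q-2}}{p+x^q}$, this reads $\phi'(x)=x\,(g(x)-g(\alpha))$. In particular $\phi'(\alpha)=0$, so $\alpha$ is a critical point, and for $x>0$ the sign of $\phi'(x)$ is controlled entirely by the sign of $g(x)-g(\alpha)$.

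The key step, and the only place the hypothesis $0<q\le 1$ enters, is to show that $g$ is strictly decreasing on $(0,\infty)$. A direct differentiation gives
\begin{eqnarray}
\notag g'(x)=q\,x^{q-3}\,\frac{(q-2)p-2x^q}{(p+x^q)^2},
\end{eqnarray}
and since $0<q\le 1$ forces $q-2<0$, the factor $(q-2)p-2x^q$ is strictly negative for every $x>0$, so $g'(x)<0$. With $g$ decreasing we get $g(x)>g(\alpha)$ on $(0,\alpha)$ and $g(x)<g(\alpha)$ on $(\alpha,\infty)$, whence $\phi'(x)>0$ on $(0,\alpha)$ and $\phi'(x)<0$ on $(\alpha,\infty)$. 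Thus $\phi$ rises on $(0,\alpha)$ and falls on $(\alpha,\infty)$, and together with continuity at $x=0$ (where $\phi(0)=h_{p,q}(0)=0$) this gives $\phi(x)\le\phi(\alpha)$ for all $x\ge 0$, which is the claim.

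I expect the monotonicity of $g(x)=h_{p,q}'(x)/x$ to be the main obstacle, both because it is exactly where the structural restriction $0<q\le 1$ becomes essential and because the derivative must be organized carefully to isolate the sign-determining factor $(q-2)p-2x^q$ from the positive prefactor $q\,x^{q-3}(p+x^q)^{-2}$. Once this monotonicity is established, the remaining pieces --- the reduction to maximizing $\phi$, the first-order condition $\phi'(\alpha)=0$, and the boundary check at $x=0$ --- are routine.
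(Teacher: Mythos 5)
Your proof is correct and follows essentially the same route as the paper: both define the auxiliary function $\phi(x)=h_{p,q}(x)-\frac{h_{p,q}'(\alpha)}{2\alpha}x^2$ and show it is maximized at $x=\alpha$ by a sign analysis of $\phi'$ around the critical point. The only cosmetic difference is that the paper gets the sign change of $\phi'$ from $\phi''(x)=h_{p,q}''(x)-\frac{h_{p,q}'(\alpha)}{\alpha}\leq 0$ (i.e.\ concavity of $h_{p,q}$ on $[0,\infty)$), whereas you factor $\phi'(x)=x\left(\frac{h_{p,q}'(x)}{x}-\frac{h_{p,q}'(\alpha)}{\alpha}\right)$ and verify directly that $h_{p,q}'(x)/x$ is decreasing.
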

\begin{proof}
Define a function $g(x)$ as followings
\begin{eqnarray}
g(x)=h_{p,q}(x)-\frac{h_{p,q}'(\alpha)}{2\alpha}x^2
\end{eqnarray}
It is obvious that $g(0)=0$ and
\begin{eqnarray}
g'(x)=h_{p,q}'(x)-\frac{xh_{p,q}'(\alpha)}{\alpha}
\end{eqnarray}
\begin{eqnarray}
g''(x)=h_{p,q}''(x)-\frac{h_{p,q}'(\alpha)}{\alpha}\leq 0
\end{eqnarray}
Therefore $g'(x)$ is a non increasing function. Since $g'(\alpha)=0$, we have that
\begin{eqnarray}
g(x)\leq g(\alpha)= h_{p,q}(\alpha)-\frac{\alpha h_{p,q}'(\alpha)}{2}
\end{eqnarray}
\end{proof}
Now, we give the convergence conclusion of Algorithm \ref{alg1}.
\begin{theorem}
The sequence $\{x^{n}\}$ get by Algorithm \ref{alg1} satisfies the following inequality,
\begin{eqnarray}
\|x^{k+1}\|_{h_{p,q}}\leq \|x^k\|_{h_{p,q}}
\end{eqnarray}
and the limit point $x^*$ of $\{x^{n}\}$ satisfies the following the equality,
\begin{eqnarray}
x^*=F(x^*)A^T(AF(x^*)A^T)^\dagger b
\end{eqnarray}
\end{theorem}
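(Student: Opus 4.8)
The plan is to read Algorithm \ref{alg1} as a majorization--minimization iteration driven by the quadratic majorizer of Lemma \ref{1101_2336}. Fix an iterate $x^k$; by induction I will maintain that $Ax^k=b$, and set $S_k:=\mathrm{support}(x^k)$. Applying Lemma \ref{1101_2336} coordinatewise with $\alpha=|x_i^k|$ for $i\in S_k$ (coordinates outside $S_k$ contribute zero to both sides), I introduce the surrogate
\begin{eqnarray}
\notag G(x;x^k)=\tfrac{1}{2}x^TH(x^k)x+C_k,\qquad C_k=\sum_{i=1}^n\left(h_{p,q}(|x_i^k|)-\tfrac{1}{2}|x_i^k|h_{p,q}'(|x_i^k|)\right).
\end{eqnarray}
Since the diagonal entry of $H(x^k)$ equals $h_{p,q}'(|x_i^k|)/|x_i^k|$, Lemma \ref{1101_2336} delivers the two defining properties of a majorizer: for every $x$ supported on $S_k$ one has $\|x\|_{h_{p,q}}\leq G(x;x^k)$, with the tangency equality $G(x^k;x^k)=\|x^k\|_{h_{p,q}}$ at the current iterate.

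Next I would identify the update as the exact minimizer of this surrogate. Because $F(x^k)=H(x^k)^{-1}$ on $S_k$ and both matrices vanish off $S_k$, minimizing the strictly convex quadratic $\tfrac12 x^TH(x^k)x$ over the affine slice $\{Ax=b,\ \mathrm{support}(x)\subseteq S_k\}$ has a KKT system whose solution is precisely $x^{k+1}=F(x^k)A^T(AF(x^k)A^T)^\dagger b$. The same formula shows $\mathrm{support}(x^{k+1})\subseteq S_k$, and since $F_{S_k}$ is positive definite we have $b=A_{S_k}x^k_{S_k}\in\mathrm{range}(A_{S_k})=\mathrm{range}(A_{S_k}F_{S_k}A_{S_k}^T)$, so that $Ax^{k+1}=b$; this closes the feasibility induction and shows the supports form a nonincreasing chain. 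The monotone decrease then follows by chaining
\begin{eqnarray}
\notag \|x^{k+1}\|_{h_{p,q}}\leq G(x^{k+1};x^k)\leq G(x^k;x^k)=\|x^k\|_{h_{p,q}},
\end{eqnarray}
where the first inequality is the majorization, the middle one is optimality of $x^{k+1}$ together with feasibility of $x^k$ on the slice, and the last is tangency.

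For the fixed-point claim I would first argue boundedness: the values $\|x^k\|_{h_{p,q}}$ are nonincreasing, hence bounded by $\|x^1\|_{h_{p,q}}$, and since $h_{p,q}(t)=\log(1+t^q/p)\to\infty$ as $t\to\infty$ this forces the uniform bound $|x_i^k|^q\leq p(e^{\|x^1\|_{h_{p,q}}}-1)$. Thus $\{x^k\}$ lies in a compact set and admits a convergent subsequence $x^{k_j}\to x^*$. Because the supports $S_k$ form a nonincreasing chain of finite sets, they stabilize to some fixed $S^*$ for all large $k$. On the slice determined by $S^*$ the map $x\mapsto F(x)A^T(AF(x)A^T)^\dagger b$ is continuous, so passing to the limit in the recursion $x^{k_j+1}=F(x^{k_j})A^T(AF(x^{k_j})A^T)^\dagger b$ yields the asserted identity $x^*=F(x^*)A^T(AF(x^*)A^T)^\dagger b$.

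The step I expect to be the main obstacle is precisely this last limit exchange, because the Moore--Penrose pseudoinverse is discontinuous exactly where the rank of $AF(x)A^T$ drops. The argument therefore hinges on ruling out a rank collapse along the subsequence: once $S_k$ stabilizes to $S^*$, the rank of $A_{S^*}F_{S^*}A_{S^*}^T$ equals $\mathrm{rank}(A_{S^*})$ as long as the surviving coordinates stay nonzero, which restores continuity of the pseudoinverse and legitimizes the limit. If a surviving coordinate of $x^{k_j}$ were to tend to zero, one would need an extra argument (for instance, further shrinking $S^*$ to the limiting support) to keep the rank constant; pinning down this degenerate case is the delicate technical point of the proof.
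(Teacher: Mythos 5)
Your descent argument is essentially the paper's own proof, just dressed in explicit majorization--minimization language: the paper likewise observes that $x^{k+1}$ minimizes $\tfrac12 x^T H(x^k)x$ subject to $Ax=b$, deduces $(x^{k+1})^T H(x^k)x^{k+1}\leq (x^k)^T H(x^k)x^k$, and then invokes Lemma \ref{1101_2336} coordinatewise to convert this into $\|x^{k+1}\|_{h_{p,q}}\leq\|x^k\|_{h_{p,q}}$; your surrogate $G(\cdot;x^k)$ with the tangency constant $C_k$ is exactly that computation repackaged. You are actually more careful than the paper on one point the paper glosses over: since $H(x^k)$ vanishes off $S_k$, the quadratic is only strictly convex on the slice $\{\mathrm{support}(x)\subseteq S_k\}$, and your induction that $x^k$ remains feasible on that slice is needed to make the middle inequality legitimate. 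Where the two treatments genuinely diverge is the fixed-point claim: the paper disposes of it with ``repeat the discussion above,'' offering no argument for why a limit point exists or why one may pass to the limit in the recursion, whereas you supply the boundedness via coercivity of $h_{p,q}$, the stabilization of the nonincreasing support chain, and --- most valuably --- you identify the real obstruction, namely the discontinuity of the Moore--Penrose pseudoinverse when a surviving coordinate of $x^{k_j}$ tends to zero and the rank of $AF(x)A^T$ drops. That degenerate case is not handled by the paper either, so your proposal does not fall short of the published argument; it makes explicit a gap that the paper leaves silently open, and any complete proof would need the extra step you sketch (passing to the limiting support) to close it.
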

\begin{proof}
At ($k+1$)th iteration, we need to point out that $ x^{k+1}$ is the solution of the following model
\begin{eqnarray}
\min \limits_{Ax=b} \frac{1}{2} x^TH(x^k) x^T.
\end{eqnarray}
With the convex quadratic programming above, we consider its Lagrange dual function,
\begin{equation*}
L( x,  \lambda)=\frac{1}{2} x^TH( x^k) x^T- \lambda^T(A x- b).
\end{equation*}
Therefore, the optimal solution $( x^*, \lambda ^*)$ should satisfy the following equations
\begin{equation}
\left\{
\begin{array}{l}
\frac{\partial L}{\partial  x}=H(x^k) x-A \lambda ^T =\boldsymbol 0,\\
A x= b.
\end{array}
\right.
\end{equation}
Hence, we find that
\begin{eqnarray}\label{1128_0017}
( x^{k+1})^TH( x^k) x^{k+1} \leq ( x^{k})^TH( x^k) x^k.
\end{eqnarray}
i.e.,
\begin{eqnarray}\label{1101_2242}
\sum \limits_{x_i^k\neq 0}\frac{h_{p,q}'(x_i^k)}{|x_i^k|}|x_i^{k+1}|^2\leq \sum \limits_{x_i^k\neq 0}\frac{h_{p,q}'(x_i^k)}{|x_i^k|}|x_i^{k}|^2
\end{eqnarray}
By Lemma \ref{1101_2336}, if $x_i^k\neq 0$, then we have that
\begin{eqnarray}\label{1101_2245}
h_p(x_i^{k+1})-\frac{h_p'(x_i^k)}{2x_i^k}\leq h_p(|x_i^k|)-\frac{|x_i^k|h_p'(|x_i^k|)}{2}
\end{eqnarray}
we deduce that
\begin{equation}
\|x^{k+1}\|_{h_p}\leq \|x^{k}\|_{h_p}
\end{equation}
by employing (\ref{1101_2242}) and (\ref{1101_2245}).
So far, we have proved that the objective function decrease at the series $\{x^{k}\}$ points. 
Therefore the convergent point $ x^*$ is the solution of the following problem
\begin{eqnarray}
\min \limits_{A  x= b} \frac{1}{2} x^TG( x^*) x^T.
\end{eqnarray}
Repeat the discussion above, the proof is completed.
\end{proof}

\section{Multiple source location problem by TDOA}
As we have introduced in Section 1, multiple source location problem can be modelled by sparse point represent method. Suppose there exist K targets and m receivers, and $k$-th ($k=1,2,...,K$) target broadcasts a time domain signal $s_k(t)$. Then the signal received by $j$-th receiver can be expressed as 
\begin{eqnarray}
h_j(t)=\sum\limits_{i=1}^Kp_{j,i}s_i(t-t_{j,i})+n_j(t)
\end{eqnarray}
where $n_j(t)$ is the noise, $p_{j,i}$ is the channel coefficient and $t_{j,i}$ stands the time delay from $i$-th target to $j$-th source. Without of generality, the signals $s_k(t)$ and $h_j(t)$ are assumed to be ergodic, mutually uncorrelated white sequences. Therefore, we can get that
\begin{eqnarray}
\int _{t} s_i(t)s_{i'}(t-a)dt=
\begin{cases}
0 & i\neq i' \\
\delta (a) & i=i'
\end{cases}
\end{eqnarray}
\begin{eqnarray}
\int _{t} n_i(t)n_{i'}(t-a)dt=
\begin{cases}
0 & i\neq i' \\
\delta (a) & i=i'
\end{cases}
\end{eqnarray}
\begin{eqnarray}
\int_{t} s_i(t)n_{i'}(t)dt=0
\end{eqnarray}
If we take $1$-st receiver as the reference, then it is easy to get that
\begin{eqnarray}
\notag r(x)&=&\int_t h_j(t)\times h_1(t) \\
\notag &=& \int_t \left( \left(\sum\limits_{i=1}^Kp_{j,i}s_i(t-t_{j,i})+n_j(t)\right)\times \left(\sum\limits_{i=1}^Kp_{1,i}s_i(t-t_{1,i})+n_1(t)\right)\right) \\
&=& \sum\limits_{i=1}^Kp_{j,i}p_{1,i}\delta \left(x-(t_{j,i}-t_{1,i})\right)
\end{eqnarray}
If there are $m+1$ receivers, then we can get a data matrix $L$,
\begin{eqnarray}
L=\left(
\begin{array}{cccc}
l_1^1 & l_1^2 & \cdots & l_1^K \\
l_2^1 & l_2^2 & \cdots & l_2^K \\
\cdots & \cdots & \cdots &\cdots \\
l_m^1 & l_m^2 & \cdots & l_m^K \\
\end{array}
\right)
\end{eqnarray}
Define $\varphi _u(x)=x^u$, where $u=1,2,...,U\leq K$. The multiple source location problem can be modelled as the following  $l_{0}^\eta (\varepsilon)$-minimization
\begin{eqnarray}\label{0127_0248}
\notag & \min\limits_{x\in \mathbb{R}^n} \|x\|_0\\
&s.t.\quad
\begin{cases}
\|Ax-b\|_{\infty}\leq \varepsilon \\
\|x\|_{\infty}\leq \eta.
\end{cases}
\end{eqnarray}
where $A=\left[\left(A^1\right)^T \ \left(A^2\right)^T \ \left(A^3\right)^T\cdots \left(A^U\right)^T\right]^T\in \mathbb{R}^{Um\times n}$, $A^u\in \mathbb{R}^{m\times n}$, 
\begin{eqnarray}
(A^u)_{i,j}=\varphi (\|w_j-T_i\|)
\end{eqnarray}
and $b=\left[\left(b^1\right)^T \ \left(b^2\right)^T \ \left(b^3\right)^T\cdots \left(b^U\right)^T\right]^T\in \mathbb{R}^{Um}$, $b^u\in \mathbb{R}^m$, 
\begin{eqnarray}
(b^u)_{i}=\sum\limits_{k=1}^K\varphi _u(l_i^k)
\end{eqnarray}
the constant $\varepsilon$ stands the error caused by noise and off-grid cases while the constant $\eta$ is larger than $1$.
\begin{remark}
In the model (\ref{0127_0248}), the elements of solution $x^*$ should be one or zeros in ideal situation and we adopt $\|x\|_{\infty}\leq \eta$ to keep the elements of solution in a reasonable range.
\end{remark}
\begin{remark}
The reason why we adopt the function $\varphi(\cdot)$ comes from the Theorem. It is obvious that these functions $\varphi_u(\cdot)$ are linearly independent by Theorem, i.e., there only exist an unique data matrix can provide such $b$ in model (\ref{0127_0248}), so we increase the rank of measurement matrix with the increasing of row vectors. 
\end{remark}
 
\subsection{Sparse model for multiple source location problem}
As we have discussed above, the sparse model \ref{0127_0248} is used for solving this problem, so we consider to use $h_{p,q}(\cdot)$ to instead of $0$-norm, so the alternative model is the following optimization problem and some simple analysis about this model is necessary to be presented, 
\begin{eqnarray}\label{0127_0320}
\notag & \min\limits_{x\in \mathbb{R}^n} \|x\|_{h_{p,q}}\\
&s.t.\quad
\begin{cases}
\|Ax-b\|_{\infty}\leq \varepsilon \\
\|x\|_{\infty}\leq \eta.
\end{cases}
\end{eqnarray}
\begin{theorem}
For given $\eta$, $\varepsilon$, $A$ and $b$, there exist a constant $p^*(A,b,\varepsilon,\eta)$ such that the solution of model (\ref{0127_0320})is the solution of model (\ref{0127_0248}) whenever $0<p<p^*(A,b,\varepsilon,\eta)$.
\end{theorem}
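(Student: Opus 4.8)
The plan is to mimic the structure of the proof of Theorem \ref{0123_0040}, since the model (\ref{0127_0248}) differs from the plain $l_0$-minimization only by replacing the linear equality constraint $Ax=b$ with the two inequality constraints $\|Ax-b\|_{\infty}\leq \varepsilon$ and $\|x\|_{\infty}\leq \eta$. The essential observation in Theorem \ref{0123_0040} was that the feasible set could be rewritten as a convex polytope, after which the optimization of $F_{p,q}(t)=\|z\|_{h_{p,q}}$ reduces to a comparison over the finite set $E(A,b)$ of extreme points. First I would verify that the new feasible set is again a polytope: the constraint $\|Ax-b\|_{\infty}\leq \varepsilon$ is exactly the pair of linear inequalities $-\varepsilon \boldsymbol e_m + b \leq Ax \leq \varepsilon \boldsymbol e_m + b$, and $\|x\|_{\infty}\leq \eta$ is the box $-\eta \boldsymbol e_n \leq x \leq \eta \boldsymbol e_n$; together with the epigraph-style constraints $-z \leq x \leq z$ these assemble into a system $Qt \leq B$ with $t=(x^T,z^T)^T$, exactly as in the earlier proof. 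This step is essentially a transcription of the matrix-inequality bookkeeping already carried out in Theorem \ref{0123_0040} and its corollary, so it should be routine.

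Next I would reproduce the decomposition of the polytope into its recession cone, lineality space, and the convex hull of its extreme points $E(A,b)=\{\alpha_1,\dots,\alpha_s\}$. Invoking the lemma on recession directions of convex functions satisfying $\varliminf_{\lambda\to+\infty} f(x+\lambda y)<+\infty$ (note $F_{p,q}$ is bounded above by $n\cdot h_{p,q}(\eta)$ on the box, so the boundedness hypothesis holds), together with the invariance along the lineality space, I would argue that minimizing $F_{p,q}$ over the whole feasible polytope is equivalent to minimizing it over the finite vertex set $E(A,b)$. This again parallels the earlier proof verbatim once the polytope structure is in place.

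The crux is then the same small-$p$ comparison argument, and this is where I expect the real work to lie. For each pair of vertices $\alpha_i,\alpha_j$ I would apply Lemma \ref{1219_2320} (when $\|\alpha_i\|_0=\|\alpha_j\|_0$ but $\mu(\alpha_i)\neq\mu(\alpha_j)$) and the elementary sparsity-ordering estimate (when $\|\alpha_i\|_0<\|\alpha_j\|_0$) to produce a threshold $p_{\alpha_i,\alpha_j}$ below which the vertex of smaller $l_0$-value attains the strictly smaller $h_{p,q}$-value; taking $p^*(A,b,\varepsilon,\eta)=\min_{\alpha_i,\alpha_j} p_{\alpha_i,\alpha_j}$ over this finite set gives a single threshold. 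For $0<p<p^*(A,b,\varepsilon,\eta)$ the $h_{p,q}$-minimizing vertex coincides with an $l_0$-minimizing vertex, and letting $p\to 0$ confirms it is also optimal for (\ref{0127_0248}).

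The main obstacle, and the point requiring genuine care rather than transcription, is establishing the full-rank/boundedness analogue of Lemma \ref{0731_1707} in this inequality-constrained setting: with $\|Ax-b\|_{\infty}\leq\varepsilon$ in place of $Ax=b$, the solution need not lie on a face where $A_{\mathrm{support}(x^*)}$ is full rank, so I cannot directly define an analogue of $r(A,b)$ from a finite set $T(A,b)$. Here I would lean on the box constraint $\|x\|_{\infty}\leq\eta$, which already guarantees that the feasible set is bounded and hence a genuine polytope with finitely many vertices, so the finiteness needed to define $p^*$ comes from $\eta$ rather than from a rank argument. Verifying that each extreme point of this polytope still has a full-rank active submatrix — so that Lemmas \ref{1219_2132} and \ref{1219_2320} apply to the vertex pairs — is the delicate step, and I would handle it by a dimension-counting argument on the active constraints at a vertex, analogous to but more involved than the clean equality case.
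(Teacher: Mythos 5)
Your proposal follows essentially the same route as the paper: the paper's own proof simply rewrites the joint constraint region as the single polyhedral system you describe and then says ``similar to the proof of Theorem \ref{0123_0040} and the Corollary, we can get the conclusion.'' Your additional observation --- that finiteness of the vertex set now comes for free from the box constraint $\|x\|_{\infty}\leq \eta$ rather than from the full-rank argument behind $r(A,b)$ --- addresses a point the paper glosses over entirely, so your reconstruction is, if anything, more careful than the published argument.
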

\begin{proof}
Both of these two optimal problems' constraint region can be rewritten as the following
\begin{eqnarray}
\left(
\begin{array}{c}
A \\
-A \\
I \\
-I
\end{array}
\right)
x\leq
\left(
\begin{array}{c}
\varepsilon \boldsymbol 1+b \\
\varepsilon \boldsymbol 1-b \\
\eta \boldsymbol 1 \\
-\eta \boldsymbol 1
\end{array}
\right)
\end{eqnarray}
Similar to the proof of Theorem and Corollary, we can get the conclusion of this theorem.
\end{proof}
Different to Algorithm 1, the model (\ref{0127_0320}) has inequality constraints. Inspired by OMP, we present an improved algorithm based Algorithm 1. 
\begin{algorithm}[H]                           
\caption{Improved algorithm for $l_{h_p}$-minimization}
\label{alg2}
\begin{algorithmic}
\REQUIRE $A\in \mathbb{R}^{m \times n}, \quad  b \in \mathbb{R}^n, \quad p, \quad p$ and set $V_a$
\ENSURE $ x^*.$
\STATE $ x^1 = arg \min \limits_{A x= b} \| x\|_1$
\STATE $D=D( x^{1})$
\FOR{$k=1,2,\cdots$ until the stopping criterion is satisfied}
\STATE $ x^{k+1}_1=DA^T(ADA^T)^{\dagger}b$
\STATE $S=\{i||x^{k+1}_i|\in Va\}$
\STATE $ x^{k+1}=arg\min \limits_{supp( x)\subseteq  S} \|A x- b\|_2^2$
\STATE $D=D( x^{k+1})$
\STATE $k=k+1$
\ENDFOR
\STATE $ x^*= x^{k+1}$
\end{algorithmic}
\end{algorithm}
For the set $Va$ which is used to control the value of solution since the element of the solution is one either zero. Usually, we can let $Va=[0.2,1.2]$. Next, we will introduce an method to update the grid points to ensure the location of targets belongs to the grid points. In order to get the ideal solution, we need to introduce the following function $f_a(\cdot)$,
\begin{equation}
f_a(x)=
\begin{cases}
|x|/a & |x|\leq a \\
|1-|x|/(1-a)| & else
\end{cases}
\end{equation}
It is obvious that the function $f_a(\cdot)$ gets its minimize value at points $x=1$ or $x=0$ and its max point at $x=a$, so the value of ideal solution of model (\ref{0127_0320}) should be zero. Since we do not know the location of targets and the targets may be not locate at grid points, so we get calculate the value of the solution to judge whether the grid points contain the locations. Once the value of a certain element is too large, it means the corresponding grid points should be corrected. Once we have identified the grid point which needs to be modified, With the update method $\varphi _q$, the modified point should be closer to the real location than the original point. Generally, the update method $\varphi _q$ can be various forms, in this paper, we present an update method $\varphi _q$ as following. For a fixed point $w_i$, we consider eight alternative vectors $z_j$, 
\begin{eqnarray}
z_j=w_i+\frac{s}{2^q}\left(cos\left(\frac{\pi(j-1)}{4}\right),sin\left(\frac{\pi(j-1)}{4}\right)\right)
\end{eqnarray}
where $j=1,2,...,8$. Let $\Xi (w_i)=\{w_i,z_1,z_2,...z_8\}$. Now we can give the definition of the update method $\varphi _q$,
\begin{eqnarray}
\varphi _q(w_i)=\arg \min \limits_{z\in \Xi (w_i) } \|x^*\|_{f_a}\ x^* \ is \ the \ solution \ of \ model\ (\ref{0127_0320}), \ where \ w_i=z 
\end{eqnarray} 
To summarize, we present the whole process of solving multiple source location problem. 
\begin{description}
	\item[Step 1] Input the matrix $T=(T_1^T,T_2^T,...,T_m^T)^T\in \mathbb{R}^{m+1\times 2}$, where $T_i$ is the location of i-th sensor. The grid point matrix $W=(w_1^T,w_2^T,...,w_n^T)^T\in \mathbb{R}^{n\times 2}$, where $w_i$ is the location of i-th grid point. The measurement matrix $L\in \mathbb{R}^{(m-1)\times k}$, the value of $p,a$, the number of iterative $G$, and the thresholding value $\delta,\varepsilon >0$.
	\item[Step 2] According to model (\ref{0127_0320}), we can obtain the corresponding $l_{h_p}$-minimization.
	\item[Step 3] By Algorithm \ref{alg2}, we can obtain the solution $x^*$.
	\item[Step 4] If $\|x^*\|_{f_a} \geq \delta $,	Denote the index set 
\begin{eqnarray}
V=\{i| f(|x^*_i|)\geq \varepsilon\}
\end{eqnarray}	
and 
\begin{eqnarray}
W=\{w_i|i\in V\}=\{w_{j_1},w_{j_2},...,w_{j_{|V|}}\}
\end{eqnarray}
    \item[Step 5] For $i=1:1:|V|$ 
                   
                   \ \ \ \ For  $q=1:1:G$
                  
                  Define the update method $\varphi _q$, and $w_{j_i}=\varphi _q(w_{j_i})$. 
                  
                  According to model (\ref{0127_0320}), we can obtain the  corresponding new $l_{h_{p,q}}$-minimization and its solution $x^*$. If $\|x\|_{f_a}\leq \delta$, then break and turn to Step 6
                  
                  \ \ \ \ end
                  
                  \ \ \ \ \ \ end
	
	\item[Step 6] By the solution $x^*$, we can get the result of location, i.e., $w_j$ with $j\in V^*$, where $V^*$ is the $k$-largest absolute value of $x^*$.
\end{description}
\subsection{Simulation Experiment}
In this subsection, we present some simulation experiment to show the validity of our method. In all experiments, we consider a $10km\times 10km$ square zone which contains all the targets. In Algorithm 2, we set $p=0.1$, $q=1$, $Va=[0.2,1.2]$ and the maximum number of iteration is $30$. In the update method, we set $a=0.5$, $G=2$, $\varepsilon \geq 0.3$ and $\delta=0.3 *K$, where $K$ is the number of targets. The location of receivers are randomly picked in the square zone.

In Figure \ref{fig5-5}, we show the detail processing of the update method $\varphi _q$. There exist three targets and none of them is located on the gird points. With the update method, we find that the modified grid point is close to the real location.
\begin{figure}[!ht]
	\centering
	\subfigure[]{\includegraphics[width=0.325\textwidth]{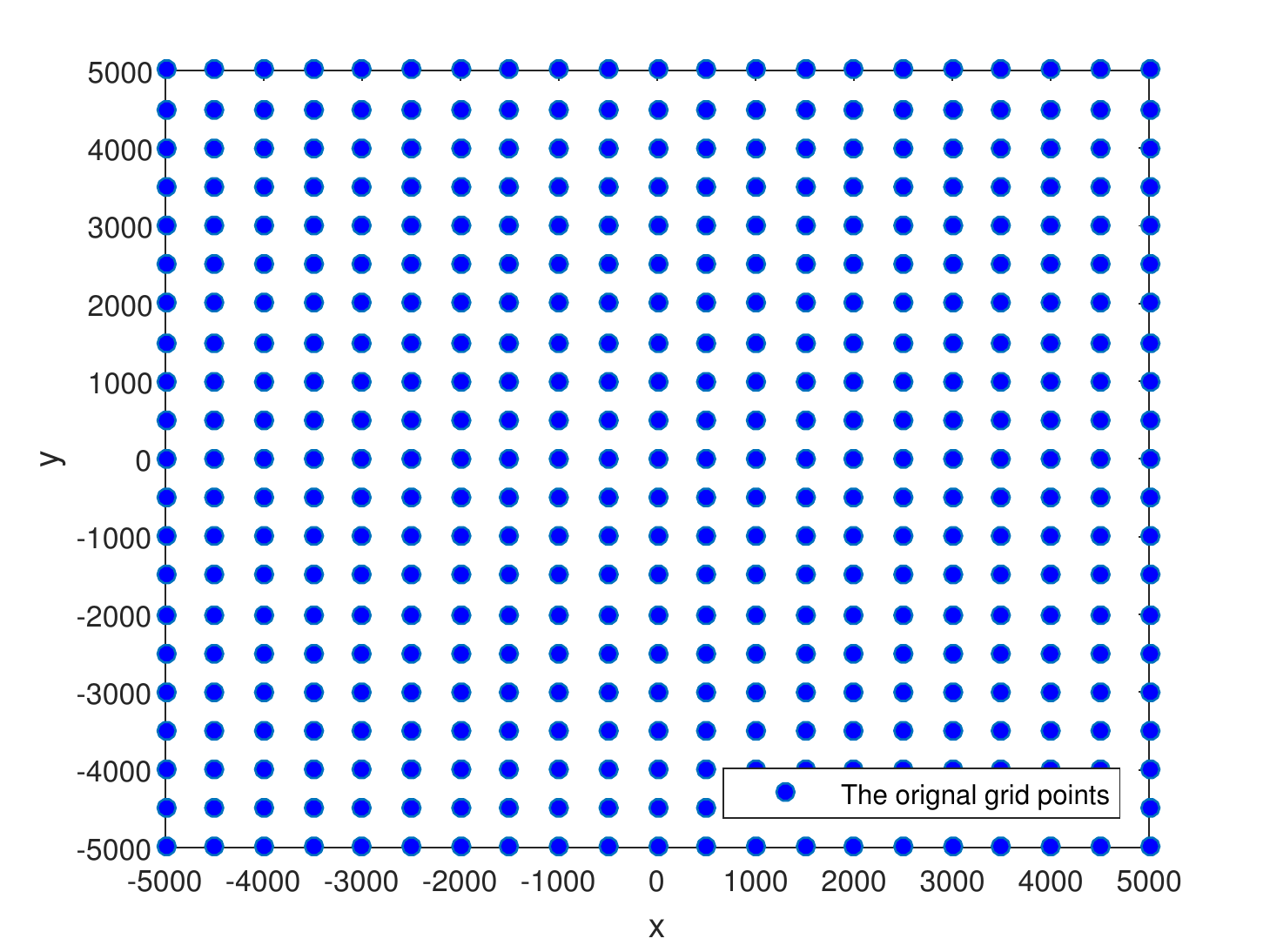}} \hfill
	\subfigure[]{\includegraphics[width=0.325\textwidth]{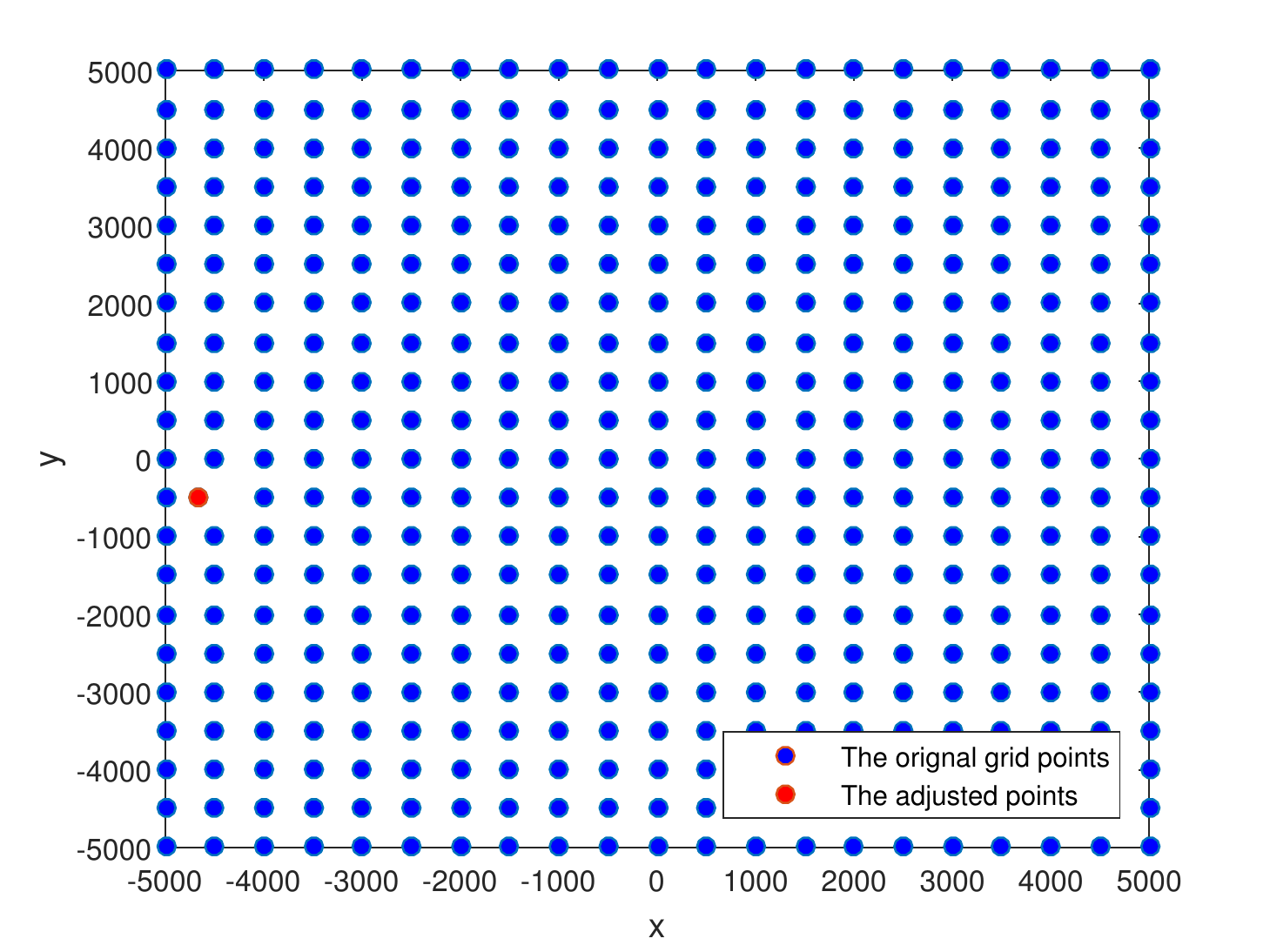}}
	\subfigure[]{\includegraphics[width=0.325\textwidth]{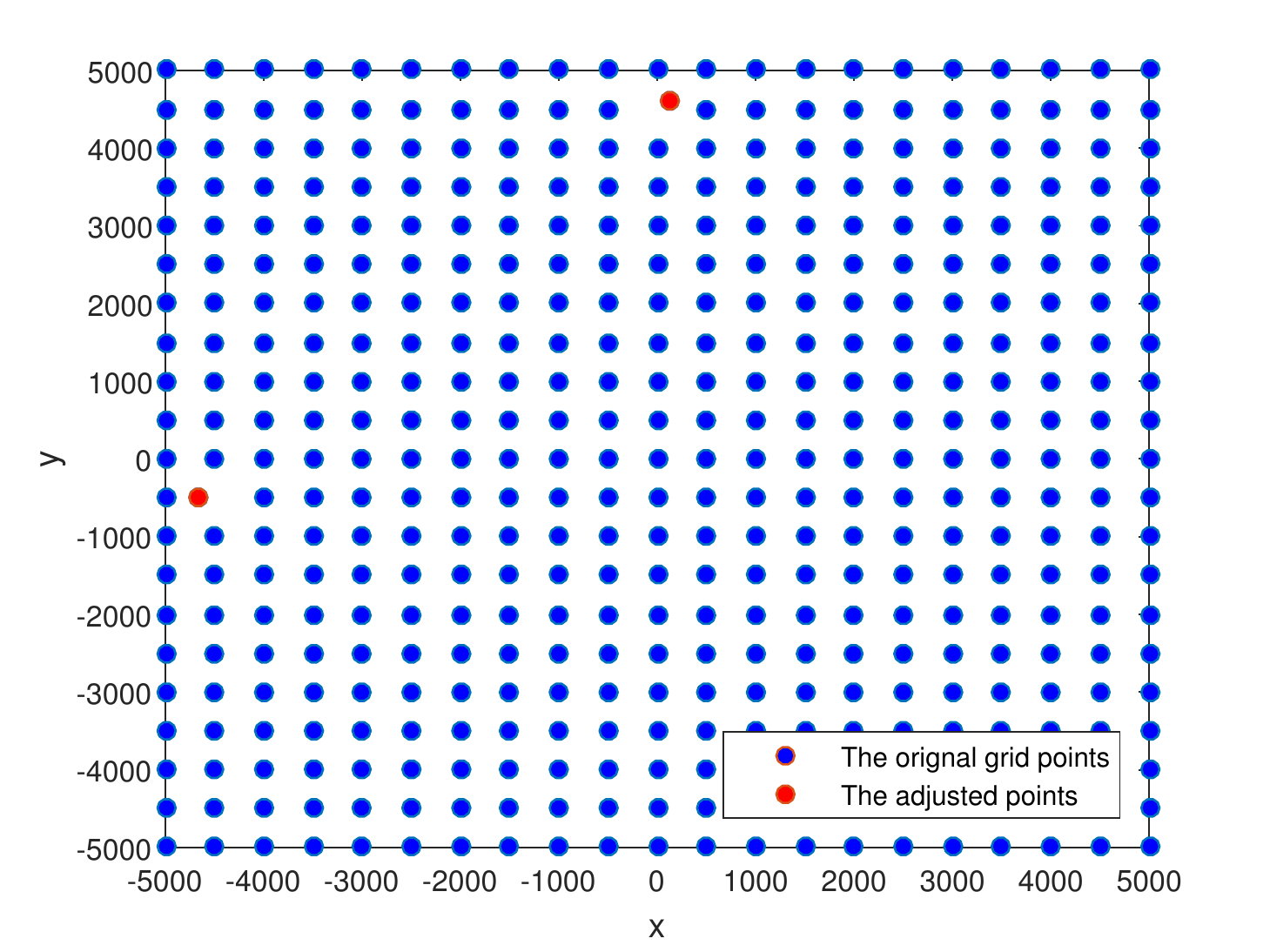}}
	\subfigure[]{\includegraphics[width=0.325\textwidth]{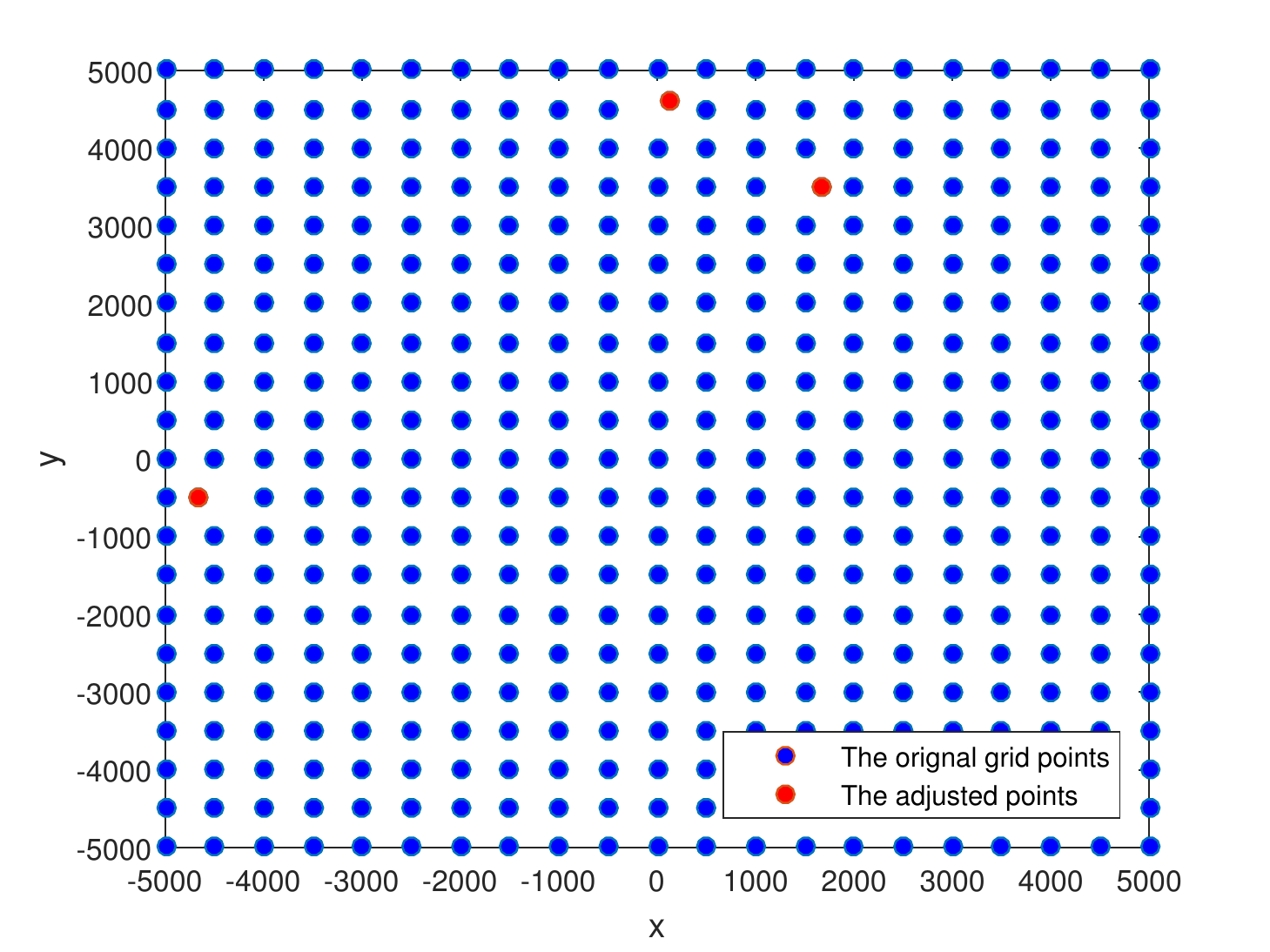}}
	\subfigure[]{\includegraphics[width=0.325\textwidth]{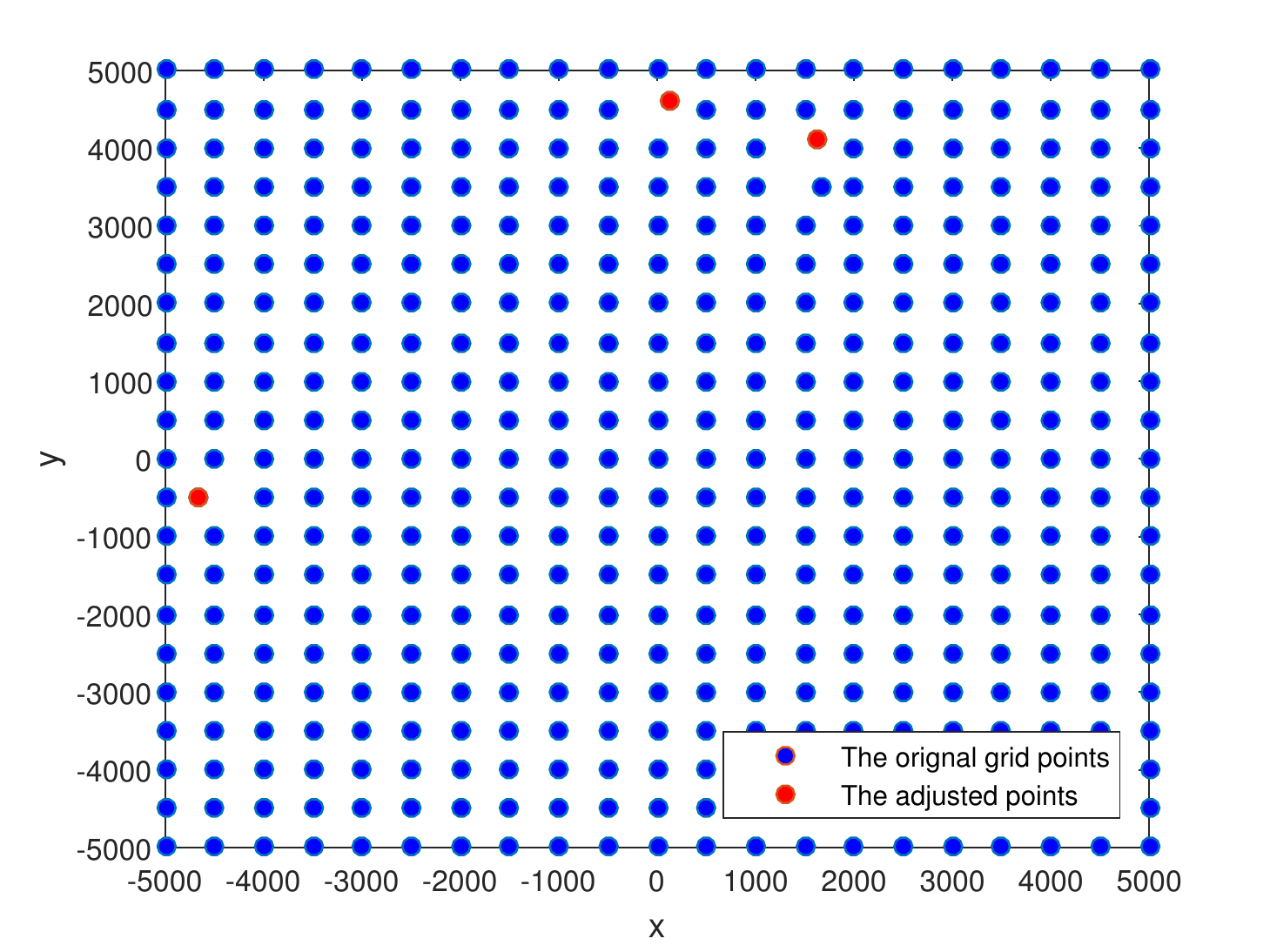}}
	\subfigure[]{\includegraphics[width=0.325\textwidth]{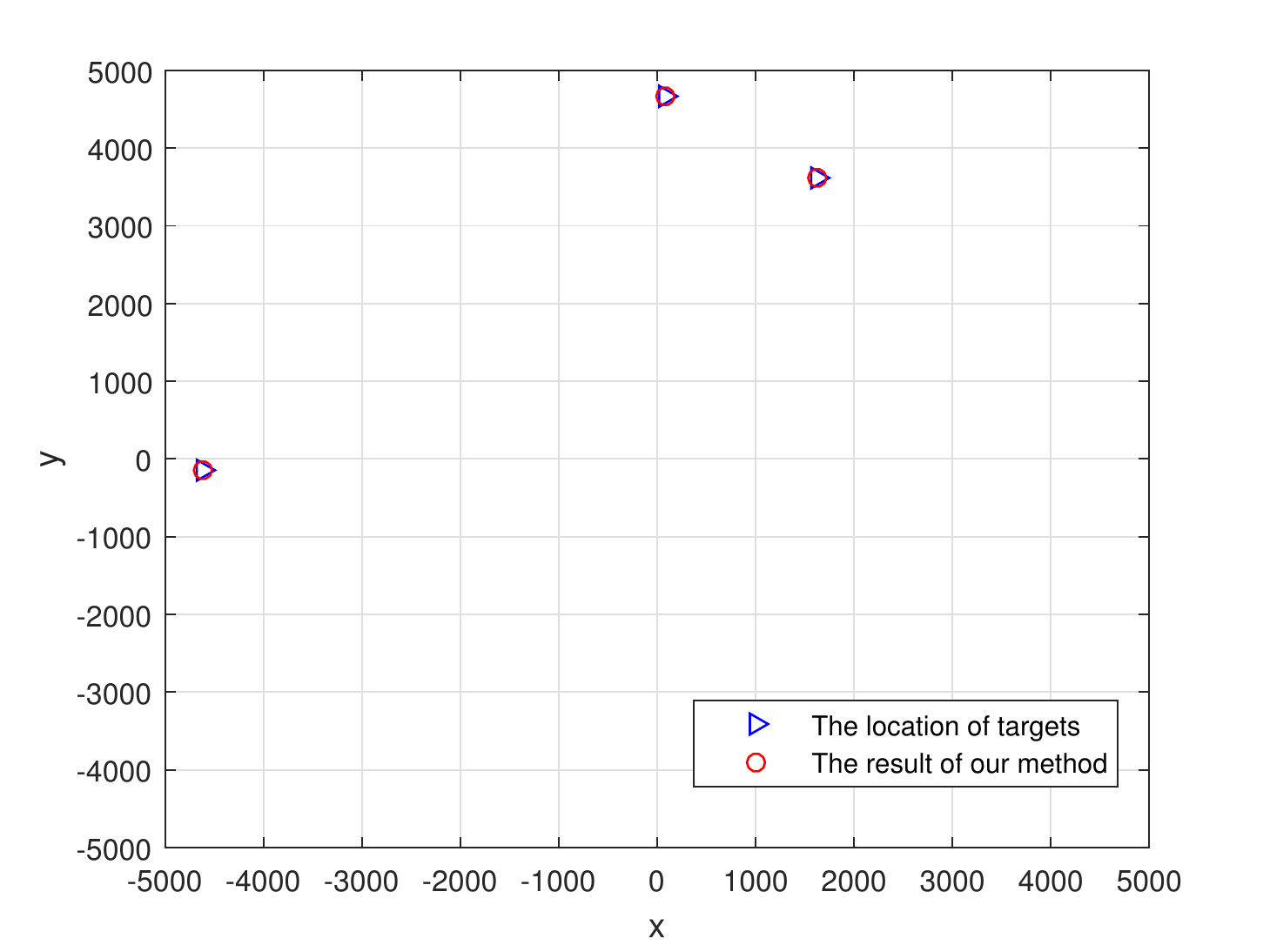}}
	\caption{The process of update method $\varphi _q$ . \label{fig5-5}}
\end{figure}

In Figure \ref{fig5-6}, Figure \ref{fig5-7} and Figure \ref{fig5-8}, the square zone is divided into 441 grid points, and we consider the case when the targets are located at the grid points or near the grid points. Compared to the classic sparse recovery algorithm, our method has a better success recovery ratio. In particular, we notice that 5 targets can be located by our method while 15 receivers are needed by classic TDOA methods.
\begin{figure}[!ht]
	\centering
	\subfigure[10 ns]{\includegraphics[width=0.32\textwidth]{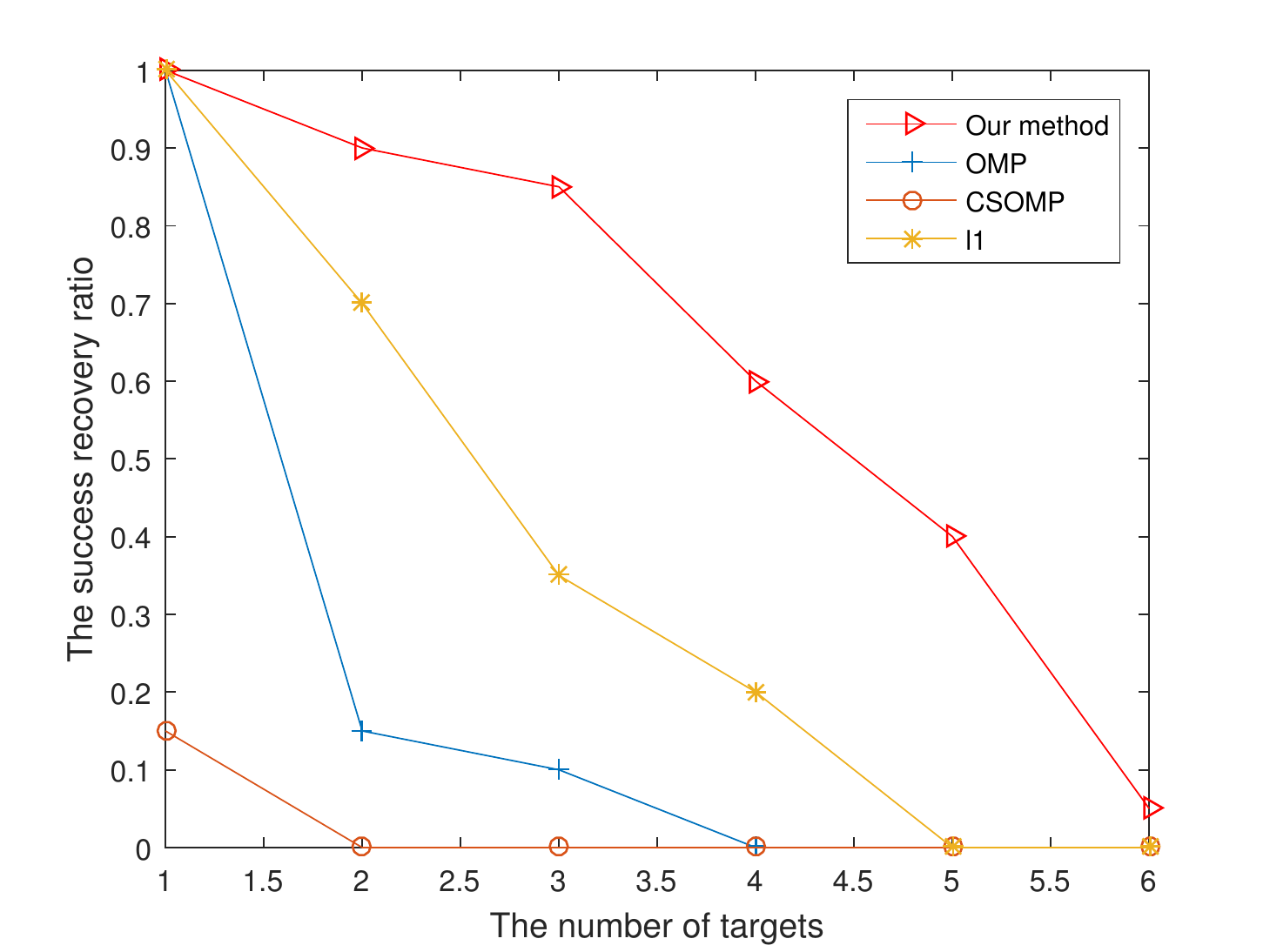}} \hfill
	\subfigure[5 ns]{\includegraphics[width=0.32\textwidth]{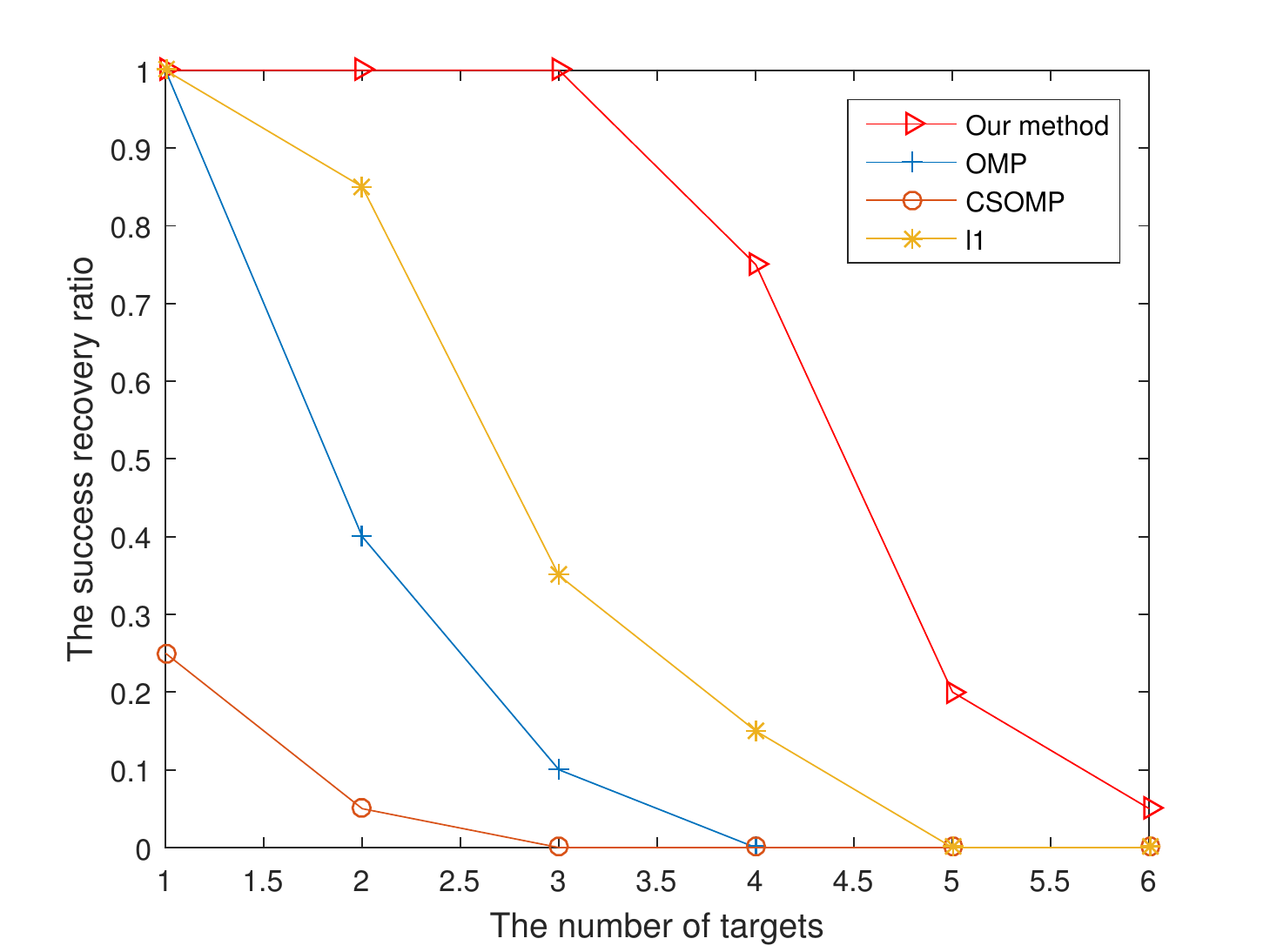}}
	\subfigure[1 ns]{\includegraphics[width=0.32\textwidth]{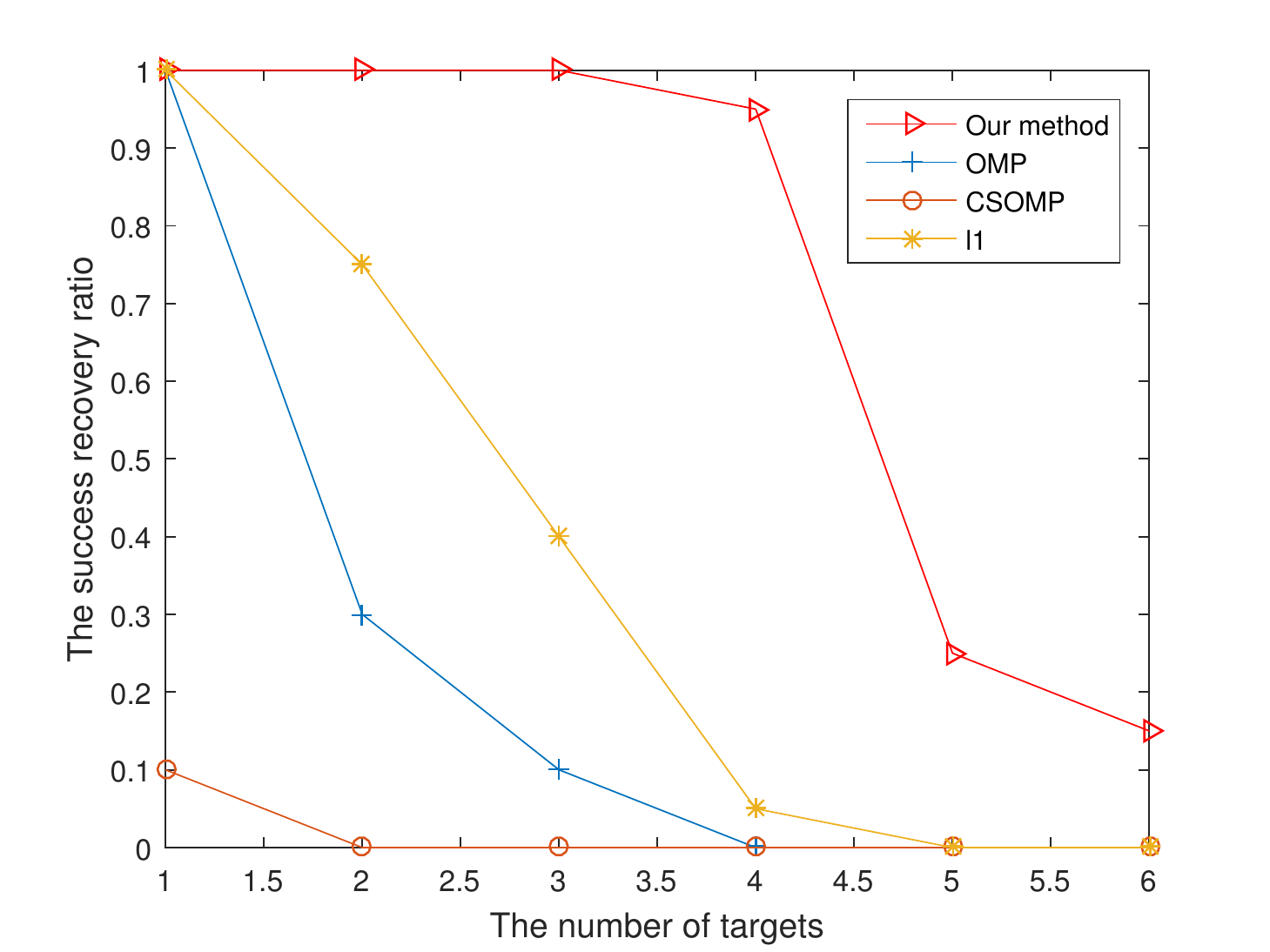}}
	\caption{With 6 sensors, the relationship between the number of tatgets and the sucess recovery ratio . \label{fig5-6}}
\end{figure}
\begin{figure}[!ht]
	\centering
	\subfigure[10 ns]{\includegraphics[width=0.32\textwidth]{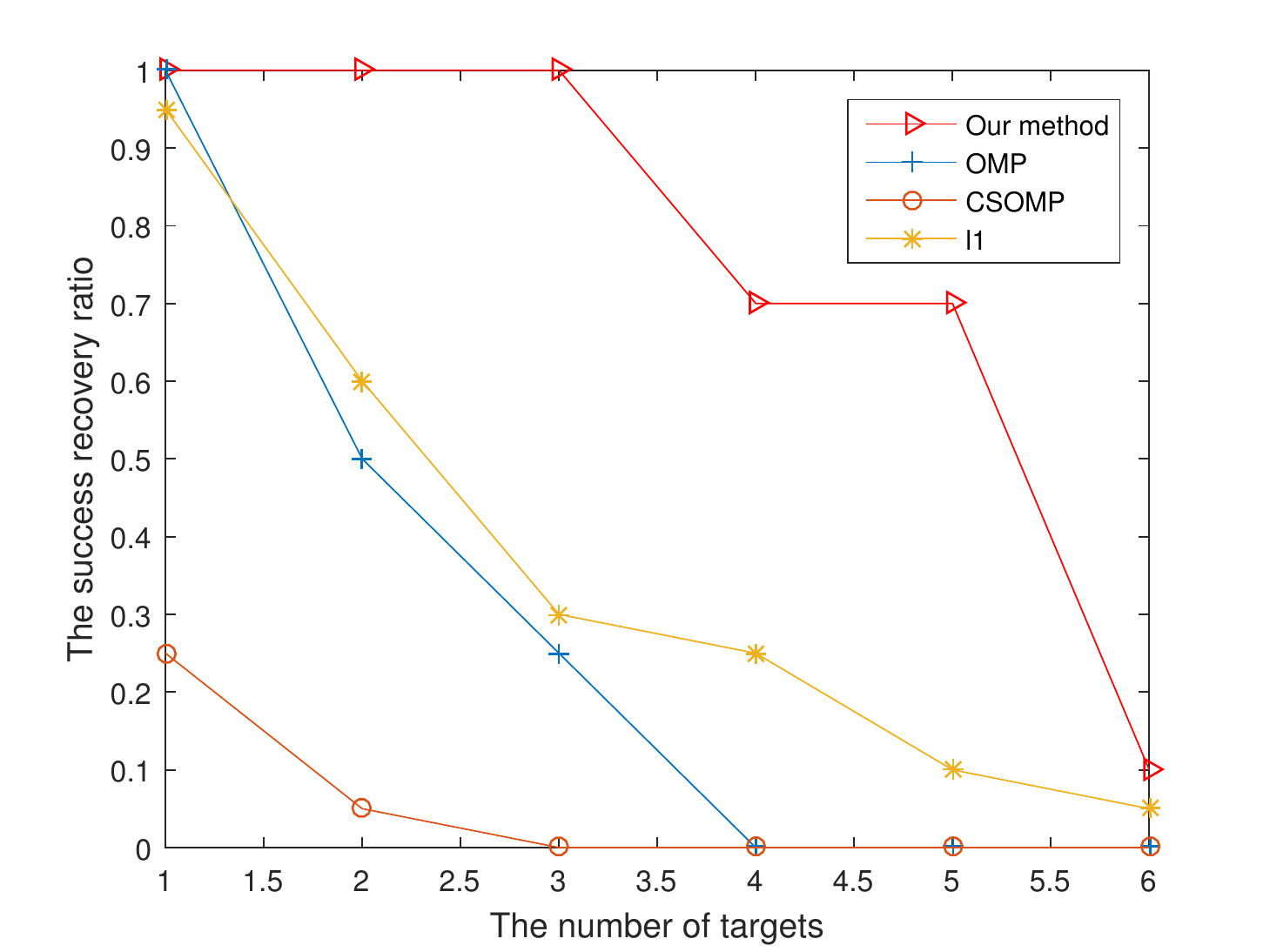}} \hfill
	\subfigure[5 ns]{\includegraphics[width=0.32\textwidth]{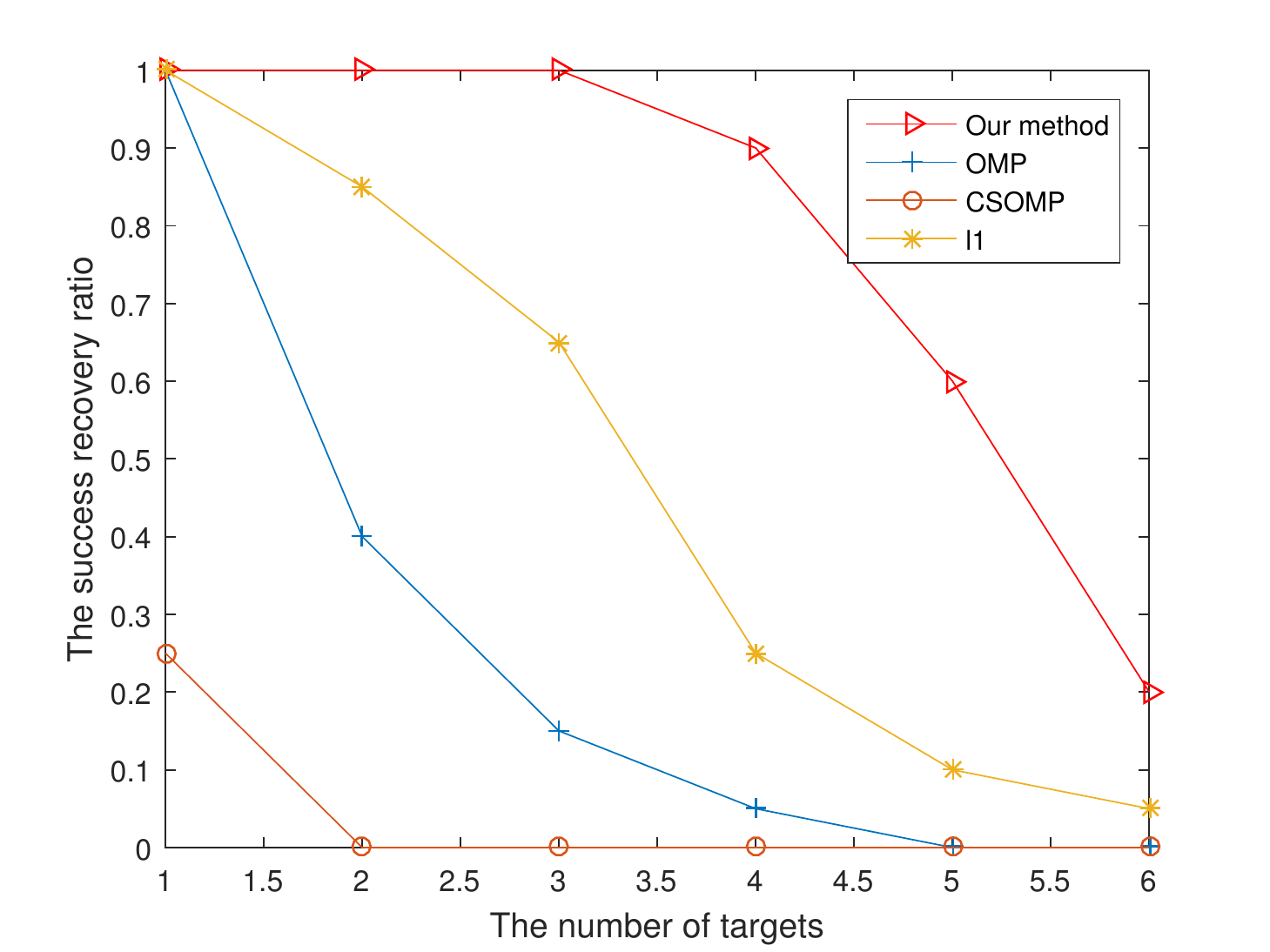}}
	\subfigure[1 ns]{\includegraphics[width=0.32\textwidth]{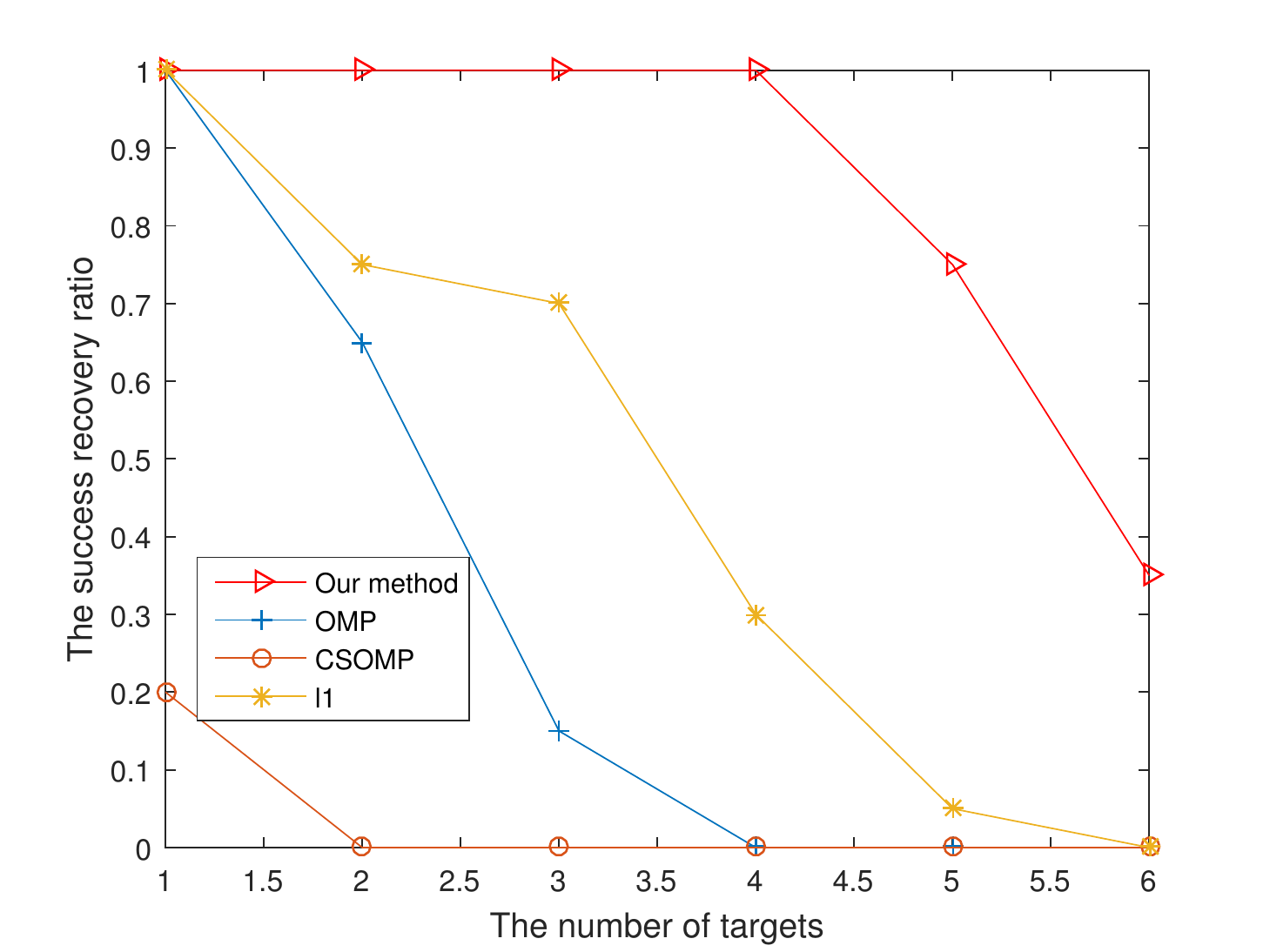}}
	\caption{With 7 sensors, the relationship between the number of tatgets and the sucess recovery ratio . \label{fig5-7}}
\end{figure}
\begin{figure}[!ht]
	\centering
	\subfigure[10 ns]{\includegraphics[width=0.32\textwidth]{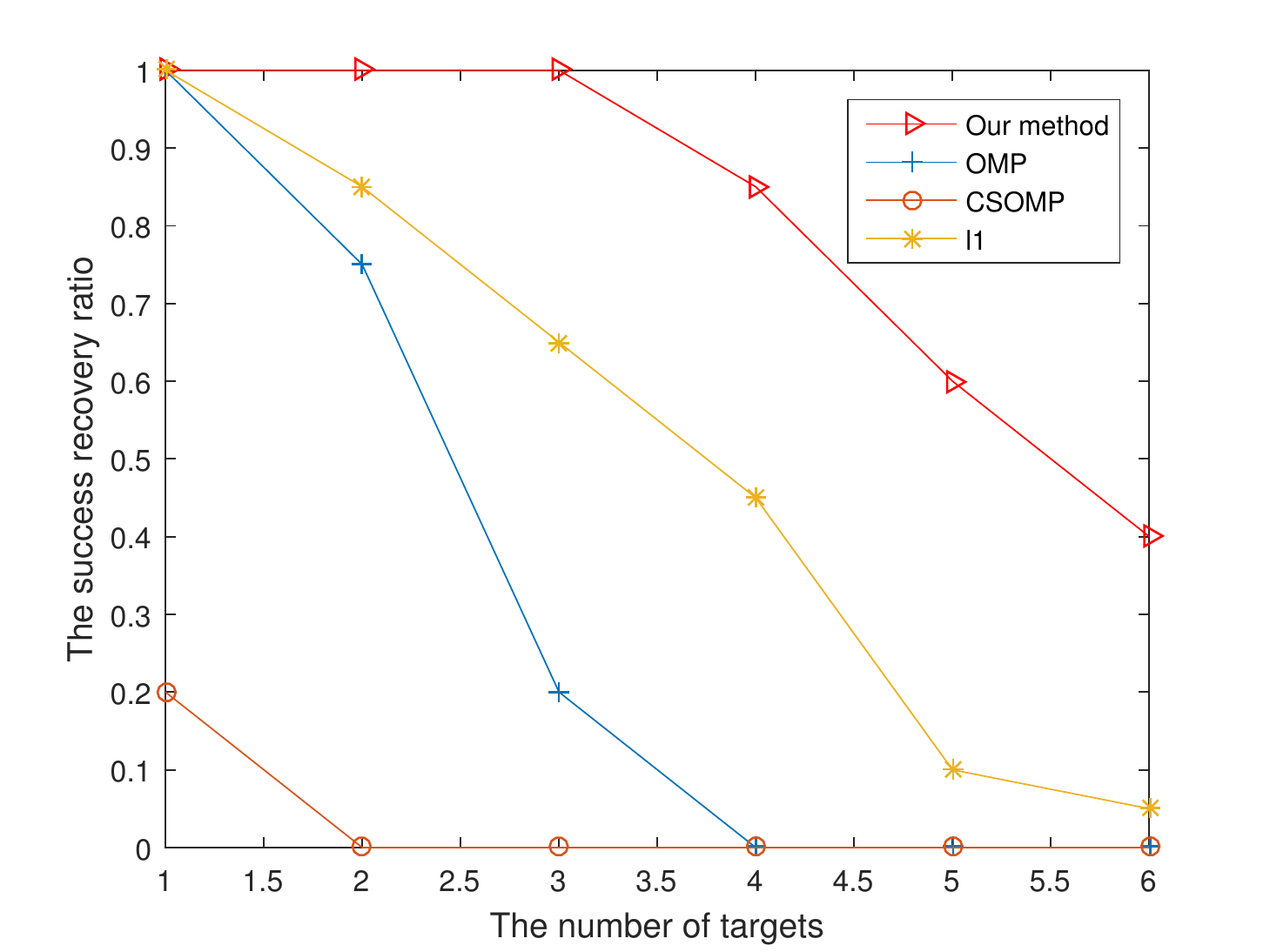}} \hfill
	\subfigure[5 ns]{\includegraphics[width=0.32\textwidth]{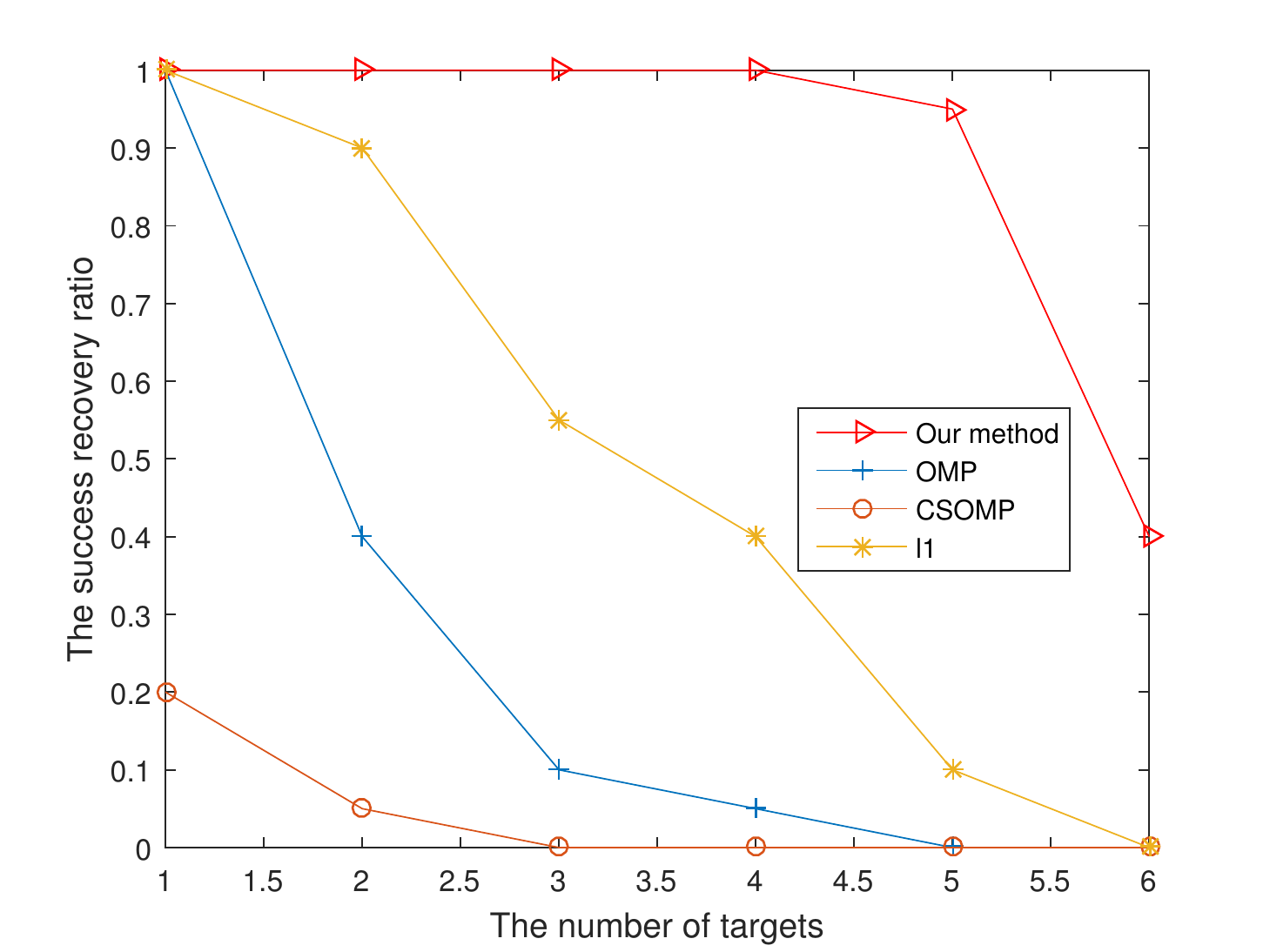}}
	\subfigure[1 ns]{\includegraphics[width=0.32\textwidth]{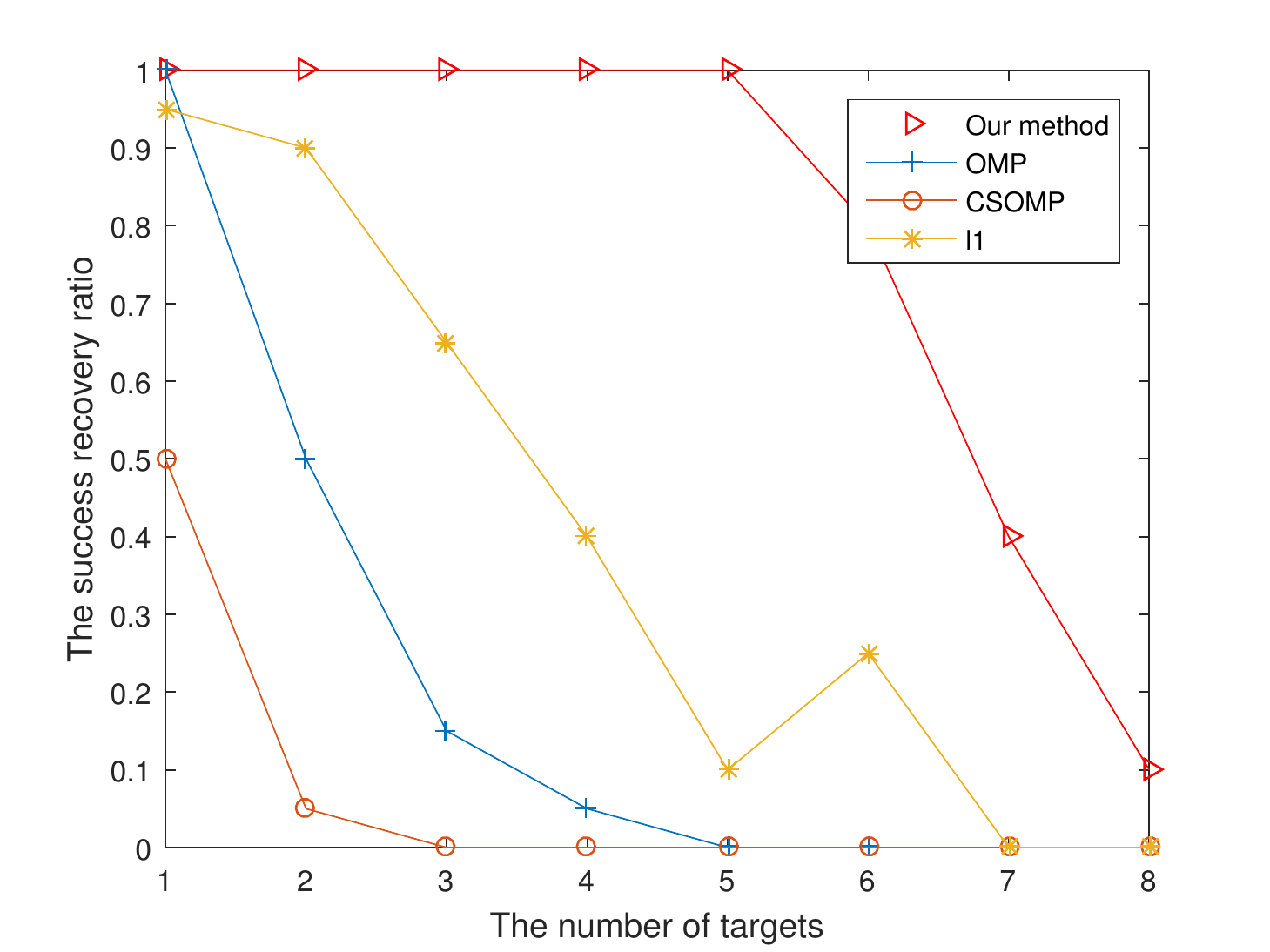}}
	\caption{With 8 sensors, the relationship between the number of tatgets and the sucess recovery ratio . \label{fig5-8}}
\end{figure}
In Figure \ref{fig5-9}, the targets are randomly picked and the results show us that our method can locate these targets properly and we show the average positioning error in Figure \ref{fig5-10}.
\begin{figure}[!ht]
	\centering
	\subfigure[Three targets ]{\includegraphics[width=0.49\textwidth]{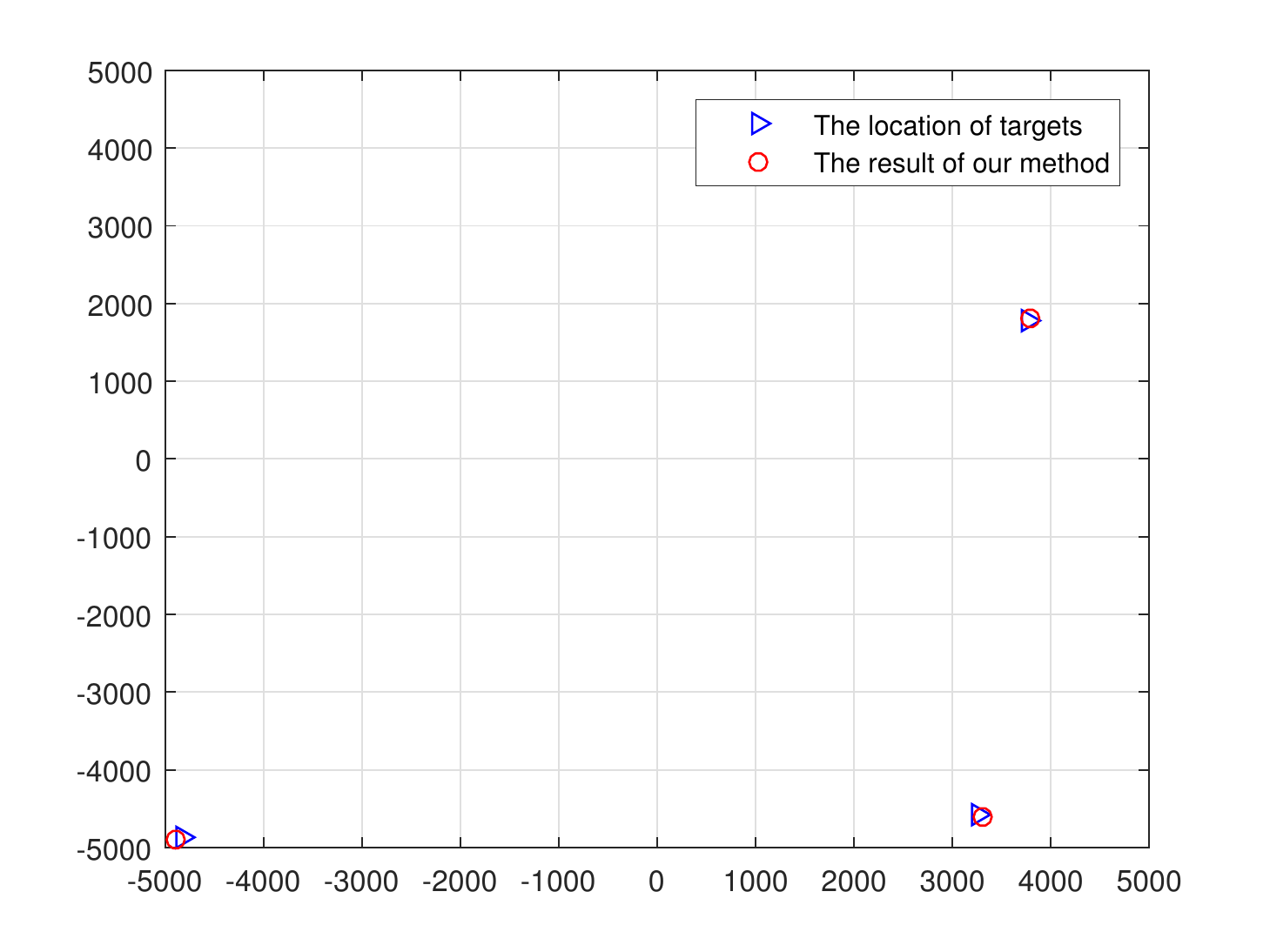}} \hfill
	\subfigure[Four targets]{\includegraphics[width=0.49\textwidth]{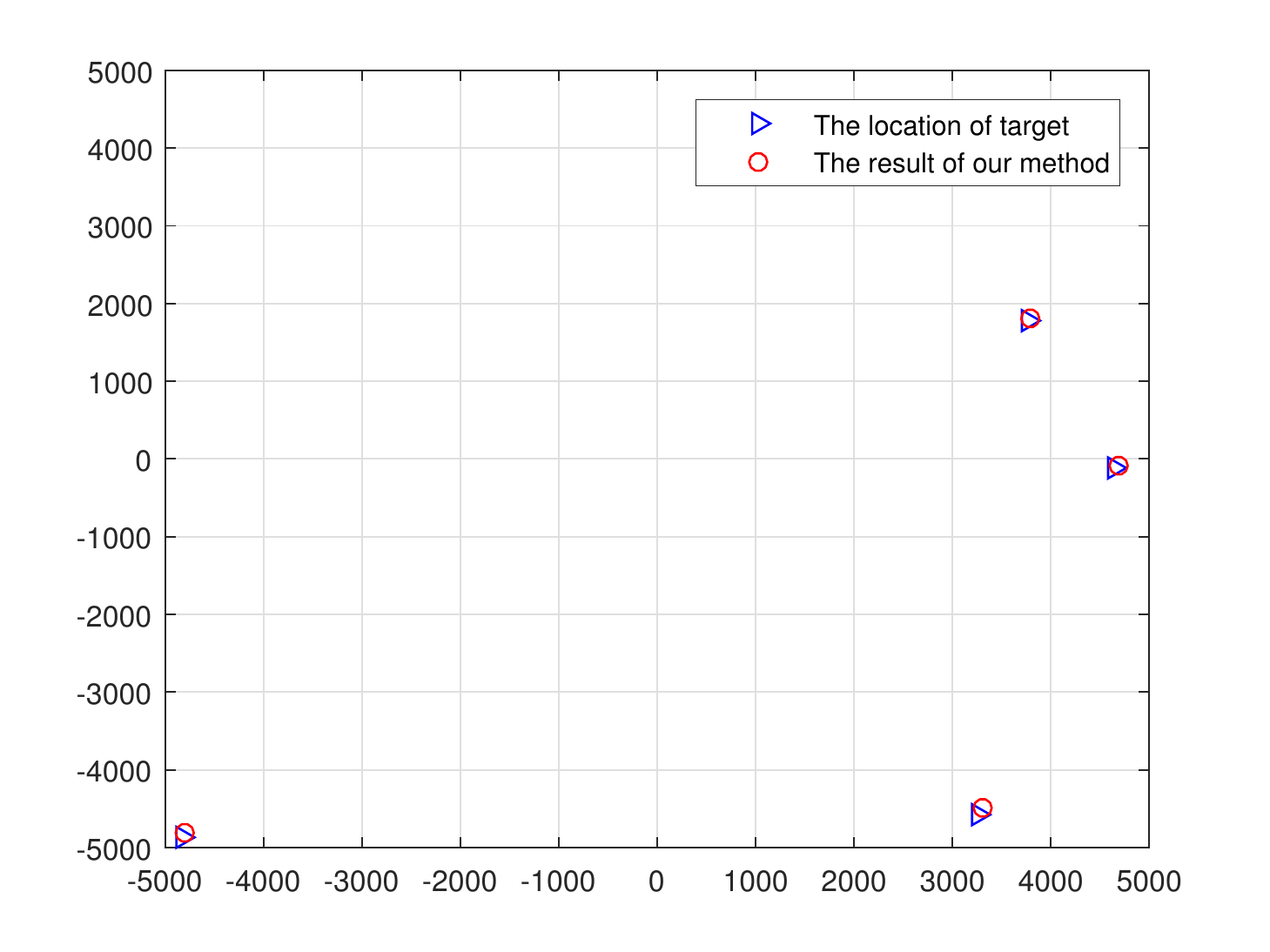}}
	\subfigure[Five targets]{\includegraphics[width=0.49\textwidth]{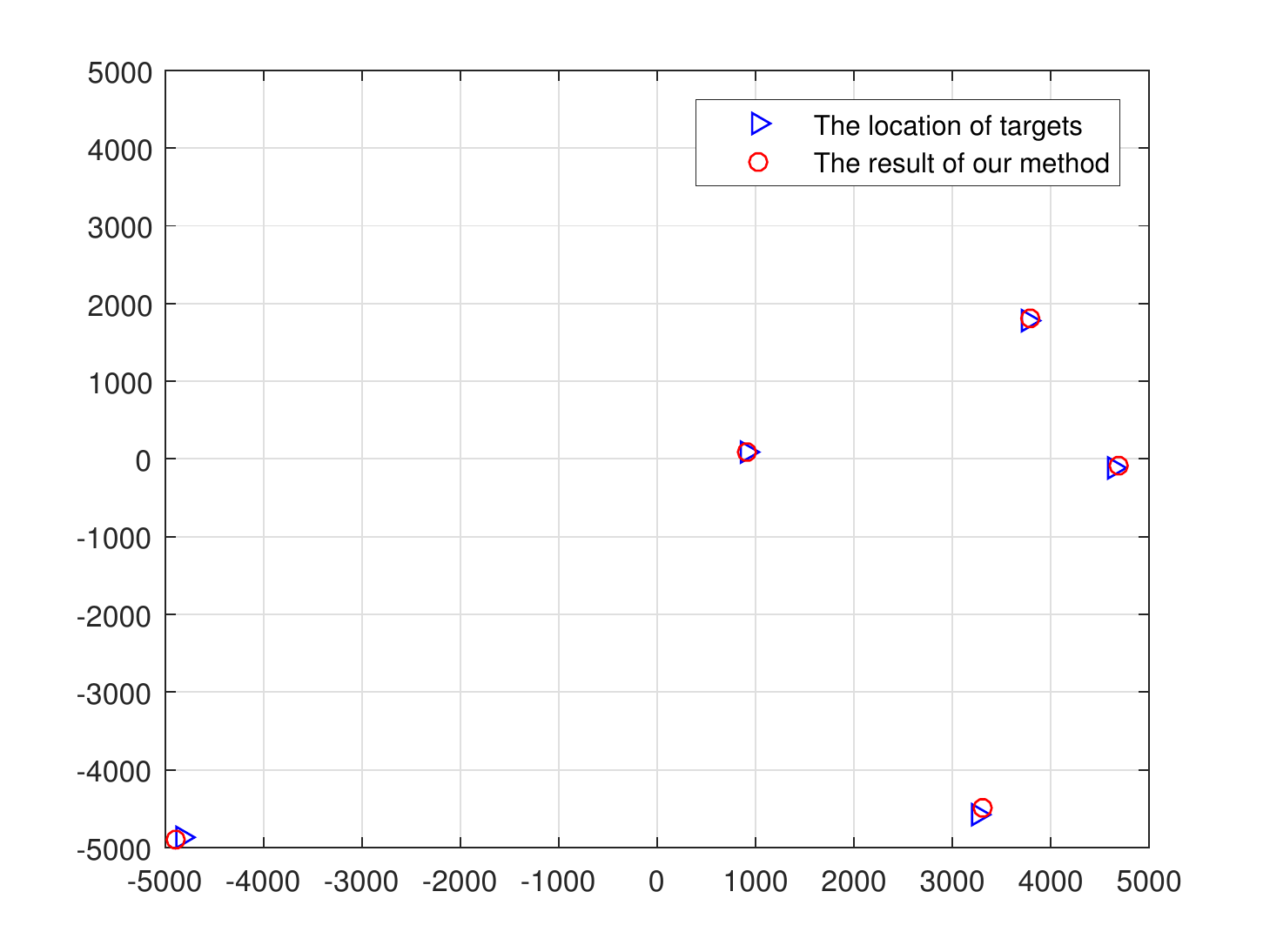}}
	\subfigure[Six targets]{\includegraphics[width=0.49\textwidth]{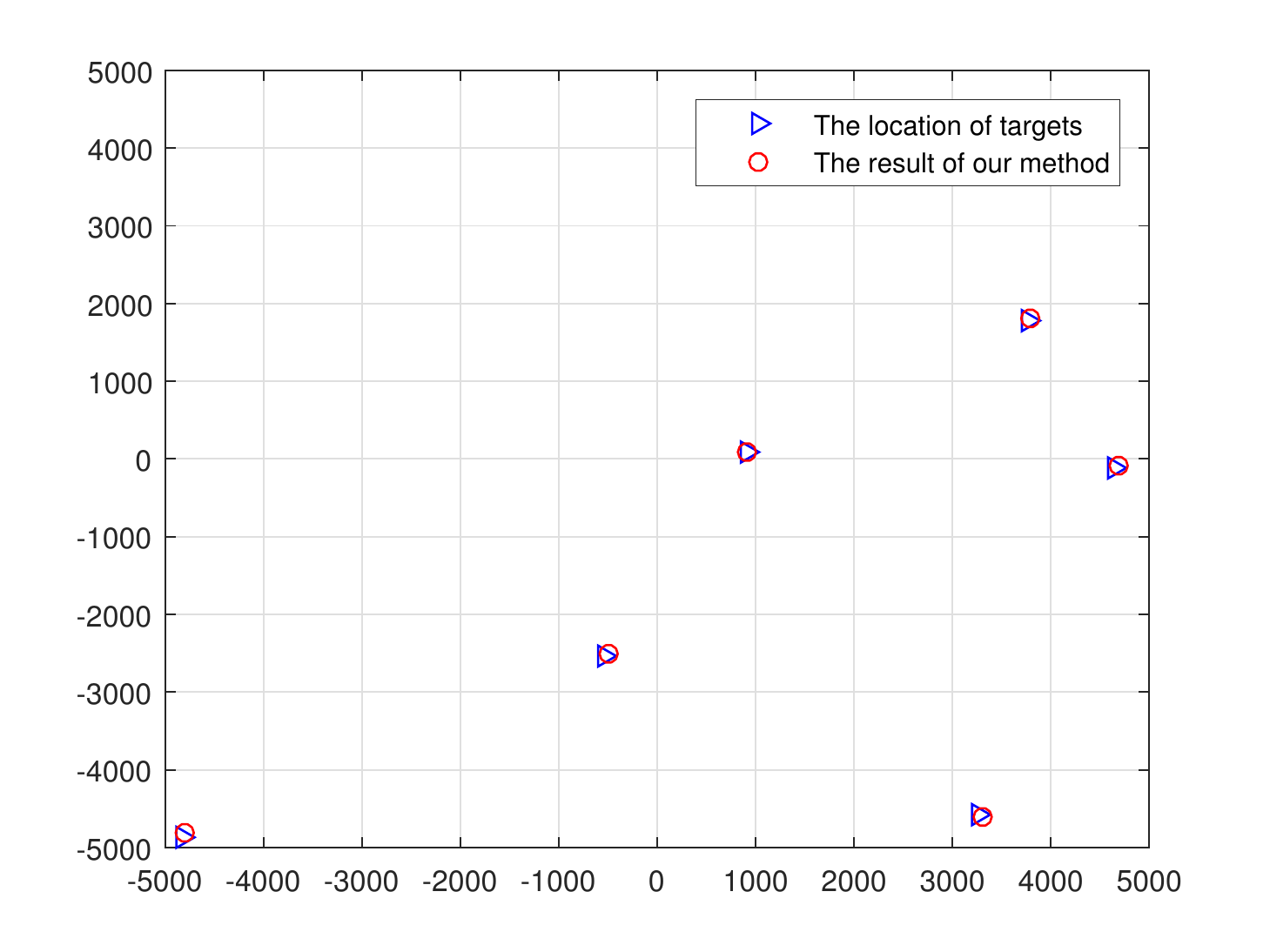}}
	\caption{The results of our method for muliti-source location problem. \label{fig5-9}}
\end{figure}
\begin{figure}[!ht]
	
	\centering
	\subfigure[The location of senors and targets]{\includegraphics[width=0.49\textwidth]{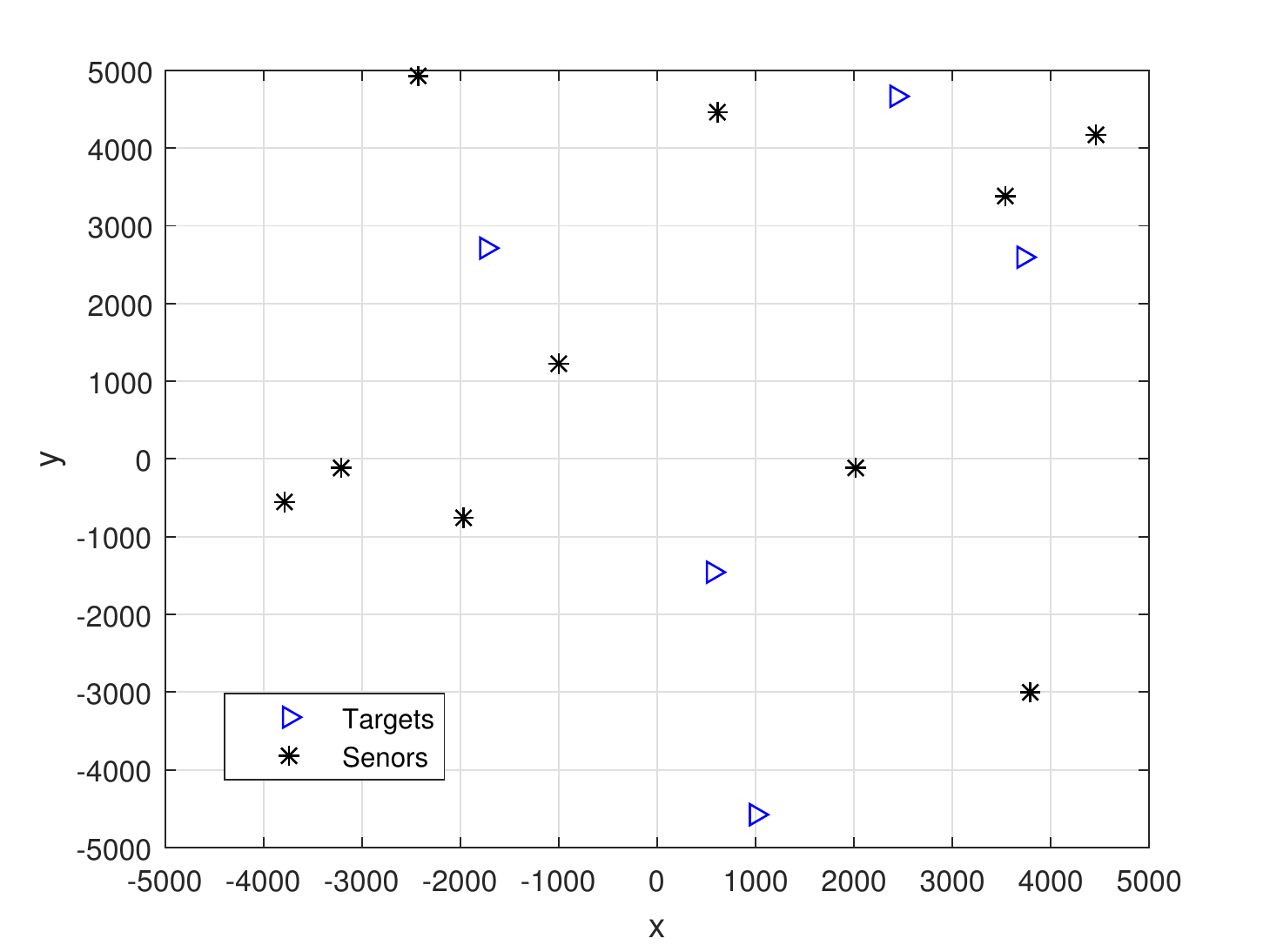}} \hfill
	\subfigure[The RMSE of different methods]{\includegraphics[width=0.49\textwidth]{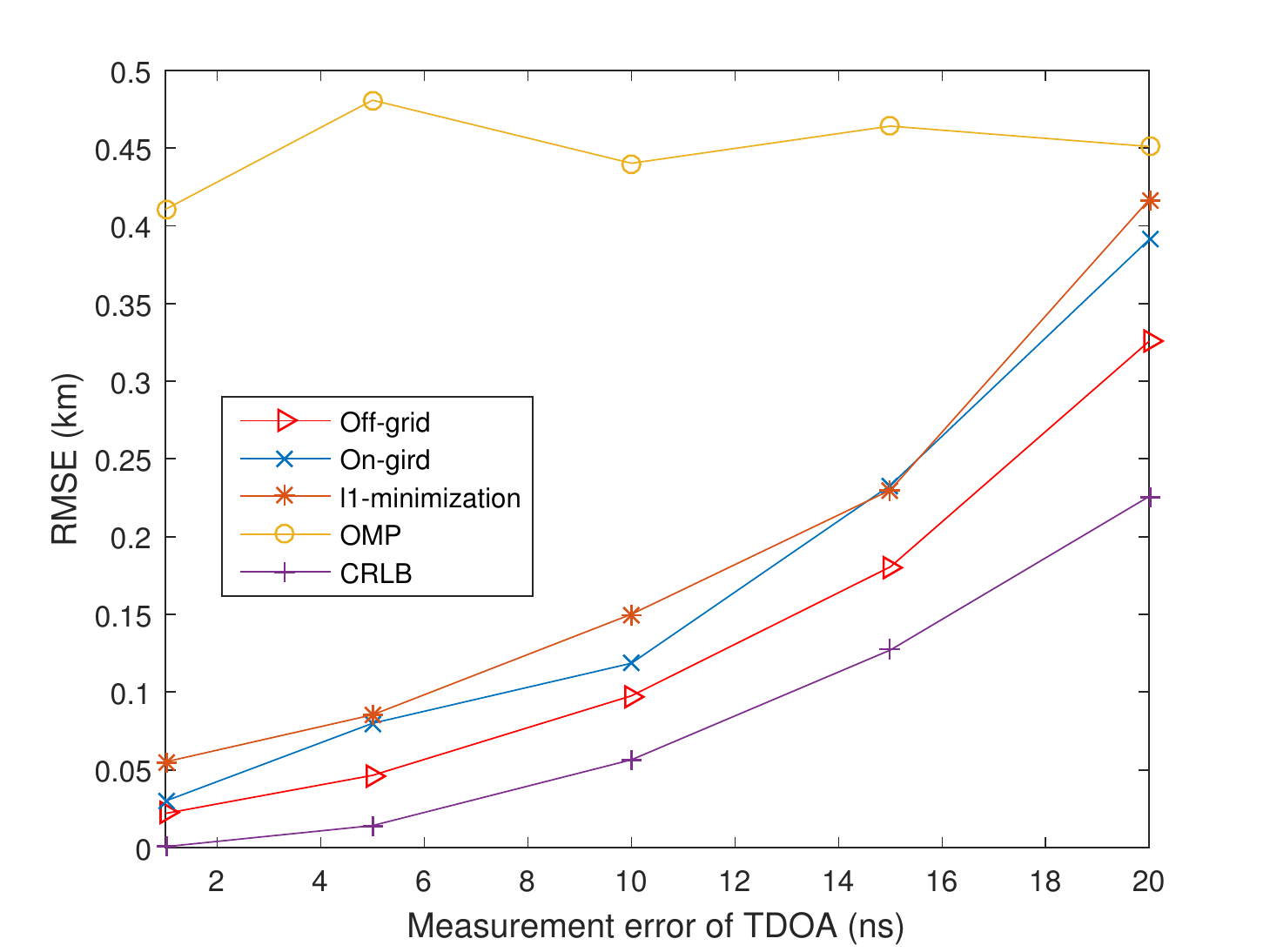}}
	
	\caption{The results of differnt method for muliti-source location problem . \label{fig5-10}}
\end{figure}

In Figure \ref{fig5-10}, we consider to locate 5 targets with 10 sensors. The performance of different algorithms is measured by the root mean square error (RMSE), which is defined as the average of error in dependent Monte Carlo trials. The result of our method designed for off-grid case is closer to CRLB than other algorithms.
\section{Conclusion}
To design a reasonable alternative model is the main method to solve $l_0$-minimization. In this paper, we consider the alternative function $h_{p,q}(x)$ since $h_{p,q}(0)=0$ and $\lim \limits_{p\rightarrow 0^+}\frac{h_{p,q}(x)}{log(1+p^{-1})}=1$. Furthermore, the equivalence relationship between these two models is presented and then we provide a necessary and sufficient condition for $l_{h_{p,q}}$-minimization. By a new concept named $H$-NSC, we prove that the recovery condition of $l_1$-minimization is more restrictive than that of $l_{h_{p,q}}$-minimization. Although $h_{p,q}(\cdot)$ is not a smooth function, we give an analysis expression of its local optimal solution and a fixed point algorithm. Finally, we use $l_{h_{p,q}}$-minimization to solve multiple source location problem. Compared to some classic algorithms, the result of our method is better than others. However, an analysis expression of $p^*(A,b,q)$ in Theorem 1 will improve the application of $l_{h_{p,q}}$-minimization. In conclusion, the authors hope that in publishing this paper, a brick will be thrown out and be replaced with a gem.  

\begin{thebibliography}{99}  

\bibitem{www}Foucart, Simon and Rauhut, Holger, A mathematical introduction to compressive sensing Bull. Am. Math 54 151-165 2017
\bibitem{olshausen1996emergence}Olshausen B A , Field D J . Emergence of simple-cell receptive field properties by learning a sparse code for natural images[J]. Nature (London), 1996, 381(6583):607-609.
\bibitem{candes2009exact}Candès, Emmanuel, Recht, Benjamin. Exact matrix completion via convex optimization[J]. Communications of the Acm, 55(6):111.
\bibitem{malioutov2005sparse}Malioutov D , Cetin M , Willsky A S . A sparse signal reconstruction perspective for source localization with sensor arrays[J]. IEEE Transactions on Signal Processing, 2005, 53(8):3010-3022.
\bibitem{wright2008robust}Wright, John and Yang, Allen Y and Ganesh, Arvind and Sastry, S Shankar and Ma, Yi,Robust face recognition via sparse representation[J]. IEEE transactions on pattern analysis and machine intelligence 2008,31(2):210-227
\bibitem{Liu2018The}Liu S , Lyu N , Wang H . The Implementation of the Improved OMP for AIC Reconstruction Based on Parallel Index Selection[J]. IEEE Transactions on Very Large Scale Integration (VLSI) Systems, 2017, 26(2):319-328.

\bibitem{Becerra2018A}Becerra J A , Madero-Ayora M J , Javier R T , et al. A Doubly Orthogonal Matching Pursuit Algorithm for Sparse Predistortion of Power Amplifiers[J]. IEEE Microwave and Wireless Components Letters, 2018:1-3.
\bibitem{Ji2018Fast}Cunxiao J , Mingwei S . Fast sparse estimation of airborne radar clutter spectrum based on improved OMP application[J]. Electronic Measurement Technology, 2018.
\bibitem{Lee2018Efficient}Seong‐Hyeon, Lee, In‐O, et al. Efficient sparse representation algorithm for accurate DOA estimation of multiple targets with single measurement vector[J]. Microwave and Optical Technology Letters, 2018.
\bibitem{wang2013high}Wang W , Wu R . High Resolution Direction of Arrival (DOA) Estimation Based on Improved Orthogonal Matching Pursuit (OMP) Algorithm by Iterative Local Searching[J]. Sensors, 2013, 13(9):11167-11183.
\bibitem{liu2017implementation}Liu S , Lyu N , Wang H . The Implementation of the Improved OMP for AIC Reconstruction Based on Parallel Index Selection[J]. IEEE Transactions on Very Large Scale Integration (VLSI) Systems, 2017, 26(2):319-328.
\bibitem{dai2009subspace}Dai W , Milenkovic O . Subspace Pursuit for Compressive Sensing Signal Reconstruction[J]. IEEE Transactions on Information Theory, 2008, 55(5).
\bibitem{temlyakov2011greedy}Temlyakov V. Greedy Approximation[M]. 2011.
\bibitem{Candes2005Decoding}E.J. Candes, Terence Tao. Decoding by Linear Programming[J]. IEEE Transactions on Information Theory, 2006, 51(12):4203-4215.
\bibitem{Cand2008The}Emmanuel J. Candès. The restricted isometry property and its implications for compressed sensing[J]. Comptes rendus - Mathématique, 2008, 346(9-10):589-592.
\bibitem{Foucart2009Sparsest}Foucart S , Lai M J . Sparsest solutions of underdetermined linear systems via $l_q$-minimization for  $0<q\leq 1$[J]. Applied and Computational Harmonic Analysis, 2009, 26(3):395-407.

\bibitem{Zhang2017A}Zhang R , Li S . A Proof of Conjecture on Restricted Isometry Property Constants  $\delta_{tk} (0 < t < 3/4)$[J]. IEEE Transactions on Information Theory, 2017, PP(99):1-1.
\bibitem{candes2006near}Candes E J , Tao T . Near-Optimal Signal Recovery From Random Projections: Universal Encoding Strategies?[J]. IEEE Transactions on Information Theory, 2006, 52(12):5406-5425.
\bibitem{cai2013sparse}Cai T T , Zhang A . Sparse Representation of a Polytope and Recovery of Sparse Signals and Low-Rank Matrices[J]. IEEE Transactions on Information Theory, 2014, 60(1):122-132.
\bibitem{chartrand2007exact}Chartrand, Rick. Exact Reconstruction of Sparse Signals via Nonconvex Minimization[J]. IEEE Signal Processing Letters, 2007, 14(10):707-710.
\bibitem{peng2015np}Peng, Jigen, Yue, et al. NP/CMP Equivalence: A Phenomenon Hidden Among Sparsity Models $l_0$ Minimization and $l_p$ Minimization for Information Processing.[J]. IEEE Transactions on Information Theory, 2015.
\bibitem{Li2014Smooth}Li Yingsong, Hamamura Masanori. -Norm Constrained Affine Projection Algorithm and Its Applications in Sparse Channel Estimation[J]. Scientific World Journal, 2014:1-15.
\bibitem{Cui2016Affine}Angang C , Jigen P , Haiyang L , et al. Affine matrix rank minimization problem via non-convex fraction function penalty[J]. Journal of Computational and Applied Mathematics, 2018, 336:353-374.

\bibitem{Cui2018Sparse}Cui A , Li H , Wen M , et al. Sparse signals recovered by non-convex penalty in quasi-linear systems[J]. Journal of Inequalities and Applications, 2018, 2018(1):59.
\bibitem{zhang2017minimization}Zhang S , Xin J . Minimization of Transformed $L_1$ Penalty: Theory, Difference of Convex Function Algorithm, and Robust Application in Compressed Sensing[J]. Mathematical Programming, 2014, 169(3):1-30.
\bibitem{Le2015DC}Le Thi H A , Pham Dinh T , Le H M , et al. DC approximation approaches for sparse optimization[J]. European Journal of Operational Research, 2015, 244(1):26-46.
\bibitem{Chouzenoux2013A}Chouzenoux E , Jezierska A , Pesquet J C , et al. A Majorize-Minimize Subspace Approach for l(2)-l(0) Image Regularization[J]. SIAM Journal on Imaging Sciences, 2011, 6(1).
\bibitem{Kiechle2015A}Kiechle M , Habigt T , Hawe S , et al. A Bimodal Co-sparse Analysis Model for Image Processing[J]. International Journal of Computer Vision, 2015, 114(2-3):233-247.
\bibitem{Han2016A}Han C , Zhang H , Gao C , et al. A Remote Sensing Image Fusion Method Based on the Analysis Sparse Model[J]. IEEE Journal of Selected Topics in Applied Earth Observations and Remote Sensing, 2016, 9(1):439-453.
\bibitem{Dantzig2003Linear}Review by: Justin R. Chimka. Linear Programming 2: Theory and Extensionsby George B. Dantzig; Mukund N. Thapa[J]. Interfaces, 34(4):322.
\bibitem{Todd1988Exploiting}Todd M J . Exploiting Special Structure in Karmarkar's Linear Programming Algorithm[J]. Mathematical Programming, 1988, 41(1):97-113.
\bibitem{wang2020new}Wang, Chang-long and Jia, Jun-xiong and Peng, Ji-gen and Lin, Shou-jin, A New Sparse Recovery Method for the Inverse Acoustic Scattering Problem. Acta Mathematicae Applicatae Sinica, English Series, 2020, 36(1):49-66
\bibitem{zhang2019gridless}Zhang, Yu and Zhang, Gong and Kong, Yingying and Wen, Fangqing, Gridless sparsity-based DOA estimation for sparse linear array. The Journal of Engineering, 2019, 2019(20):6629-6632
\bibitem{su2013isar} Su W , Wang H , Qin Y , et al. The ISAR imaging of ballistic midcourse targets based on Sparse Bayesian Learning[C]// Signal and Information Processing (ChinaSIP), 2013 IEEE China Summit and International Conference on. IEEE, 2013.
\bibitem{Jamali2013Sparsity}Jamali-Rad H , Leus G . Sparsity-aware TDOA localization of multiple sources[C]// Acoustics, Speech and Signal Processing (ICASSP), 2013 IEEE International Conference on. IEEE, 2013.


\end{thebibliography}

\end{document}